\tikzset{>=stealth',
        cvertex/.style={circle,draw=black,inner sep=1pt,outer sep=3pt},
        vertex/.style={circle,fill=black,inner sep=1pt,outer sep=3pt},
        star/.style={circle,fill=yellow,inner sep=0.75pt,outer sep=0.75pt},
        tvertex/.style={inner sep=1pt,font=\scriptsize},
        gap/.style={inner sep=0.5pt,fill=white}}
\author{J. Karmazyn}
\title{Deformations of Algebras Defined by Tilting Bundles}
\theoremstyle{definition} 
\newtheorem{resolution}{Definitions}[section]
\newtheorem{TiltingDef}[resolution]{Definition}
\newtheorem{Nilpotent Deformations}[resolution]{Definition}
\newtheorem{Graded Deformations}[resolution]{Definition}
\newtheorem{PBW Deformation}[resolution]{Definition}
\newtheorem{Scheme Deformations}[resolution]{Definitions}
\newtheorem{Algebra Deformations}[resolution]{Definitions}
\newtheorem{Formal Deformations}[resolution]{Definition}
\newtheorem{Graded Scheme Deformations}[resolution]{Definition}
\newtheorem{Graded Algebra Deformations}[resolution]{Definition}
\newtheorem{Graded Conditions}[resolution]{Conditions}
\newtheorem{RCA}[resolution]{Definition}
\newtheorem{RCA[3,2]}[resolution]{Example}
\newtheorem{GoodAction}[resolution]{Definition}
\newtheorem{C*Action}[resolution]{Definition}
\newtheorem{LiftBundle}[resolution]{Definition}
\newtheorem{GradedDefinitions}[resolution]{Definitions}
\newtheorem{TiltingRemark}[resolution]{Remark}
\newtheorem{RemarkTU}[resolution]{Remark}
\newtheorem{GradingActionRemark}[resolution]{Remark}
\newtheorem{ActionGenerationRemark}[resolution]{Remark}
\newtheorem{CompleteActionDefinition}[resolution]{Definition}
\newtheorem{GradedRingRemarks}[resolution]{Remark}
\newtheorem{ShiftedGradingRemark}[resolution]{Remark}
\newtheorem{NonUniqueGradingRemark}[resolution]{Remark}
\newtheorem{GradedReconstructionDefinition}[resolution]{Definition}
\newtheorem{RCAExample}[resolution]{Example}
\newtheorem{Reconstruction Algebra Example 12,7}[resolution]{Example}
\newtheorem{ExampleSimpleTypeA}[resolution]{Example}
\newtheorem{ReesFibresRemark}[resolution]{Remark}
\newtheorem{TypeDExample}[resolution]{Example}
\newtheorem{NonQuotientExample}[resolution]{Example}
\newtheorem{ExplicitCalculationRemark}[resolution]{Remark}
\theoremstyle{plain} 
\newtheorem{Tilting}[resolution]{Theorem}
\newtheorem{Reconstruction}[resolution]{Theorem}
\newtheorem{TiltingDeform}[resolution]{Lemma}
\newtheorem{Main}[resolution]{Theorem}
\newtheorem{Graded Main}[resolution]{Theorem}
\newtheorem*{PMain}{Theorem}
\newtheorem*{PMainGraded}{Theorem}
\newtheorem{DualPullback}[resolution]{Lemma}
\newtheorem{SymplecticTiltingBundle}[resolution]{Lemma}
\newtheorem{LiftingToEquiTilting}[resolution]{Proposition}
\newtheorem{GradedReconstruction}[resolution]{Corollary}
\newtheorem{FlatBaseChange}[resolution]{Lemma}
\newtheorem{GL2Main}[resolution]{Corollary}
\newtheorem{RationalMain}[resolution]{Corollary}
\newtheorem{GL2AllDeforms}[resolution]{Lemma}
\newtheorem{FilteredCorollary}[resolution]{Corollary}
\newtheorem{KaledinGinzburg}[resolution]{Lemma}
\newtheorem{SymplectRef}[resolution]{Corollary}
\newtheorem{HHSQS}[resolution]{Lemma}
\newtheorem{VitalLemma1}[resolution]{Lemma}
\newtheorem{VitalLemma1'}[resolution]{Lemma}
\newtheorem{GradedVitalLemma1}[resolution]{Lemma}
\newtheorem{GradedTiltingDeform}[resolution]{Lemma}
\newtheorem{GradedGeneration}[resolution]{Lemma}
\newtheorem{GradedTiltingBundles}[resolution]{Lemma}
\newtheorem{ReesRingGradedDeformations}[resolution]{Lemma}
\newtheorem{TodaUehara}[resolution]{Lemma}
\newtheorem{DerivedBaseChange}[resolution]{Lemma}
\newtheorem{Equivariant Ample Line Bundle}[resolution]{Lemma}
\newtheorem{GradedNakayama}[resolution]{Lemma}
\newtheorem{BaseChangeCorollary}[resolution]{Corollary}
\newtheorem{BetterGeneration}[resolution]{Lemma}
\newtheorem{ActionLemma}[resolution]{Lemma}
\newtheorem{GradedBaseChangeCorollary}[resolution]{Corollary}
\newtheorem{Bounded Push Down}[resolution]{Lemma}
\newtheorem*{pRationalMain}{Corollary}
\newtheorem*{pGL2Main}{Corollary}
\newtheorem*{pSymplectRef}{Corollary}
\newtheorem*{PFilteredCorollary}{Corollary}
\newtheorem{IntroFibreCorollary}[resolution]{Corollary}
\newtheorem{IntroFibreGradedCorollary}[resolution]{Corollary}
\DeclareMathAlphabet{\mathbbm}{U}{bbm}{m}{n}
\let\oldtocsection=\tocsection
\let\oldtocsubsection=\tocsubsection
\let\oldtocsubsubsection=\tocsubsubsection
\renewcommand{\tocsection}[2]{\hspace{0em}\oldtocsection{#1}{#2}}
\renewcommand{\tocsubsection}[2]{\hspace{1em}\oldtocsubsection{#1}{#2}}
\renewcommand{\tocsubsubsection}[2]{\hspace{2em}\oldtocsubsubsection{#1}{#2}}
\newcommand{\Hom}{\textnormal{Hom}}
\newcommand{\Perf}{\textnormal{Perf}}
\newcommand{\SHom}{\mathcal{H}{om}}
\newcommand{\Ext}{\textnormal{Ext}}
\newcommand{\Tor}{\textnormal{Tor}}
\newcommand{\Coh}{\textnormal{Coh}}
\newcommand{\QCoh}{\textnormal{QCoh}}
\newcommand{\End}{\textnormal{End}}
\newcommand{\CalEnd}{\mathcal{E}nd}
\newcommand{\HH}{\textnormal{HH}}
\newcommand{\CH}{\textnormal{H}}
\newcommand{\SL}{\textnormal{SL}}
\newcommand{\GL}{\textnormal{GL}}
\newcommand{\Sp}{\textnormal{Sp}}
\newcommand{\Spec}{\textnormal{Spec}}
\newcommand{\Hilb}{\textnormal{Hilb}}
\def\mod{\mathop{\textnormal{mod}}\nolimits}
\newcommand{\RCalHom}{\mathcal{RH}\textnormal{om}}
\newcommand{\Aut}{\textnormal{Aut}}
\newcommand{\Sym}{\textnormal{Sym}}
\newcommand{\E}{\mathcal{E}}
\newcommand{\CalHom}{\mathcal{H}\textnormal{om}}
\begin{document}

\maketitle
\begin{abstract}

In this paper we produce noncommutative algebras derived equivalent to deformations of schemes with tilting bundles. We do this in two settings, first proving that a tilting bundle on a scheme lifts to a tilting bundle on an infinitesimal deformations of that scheme and then expanding this result to $\mathbb{C}^*$-equivariant deformations over schemes with a good $\mathbb{C}^*$-action. In both these situations the endomorphism algebra of the lifted tilting bundle produces a deformation of the original endomorphism algebra, and this is a graded deformation in the $\mathbb{C}^*$-equivariant case.

We apply our results to rational surface singularities, generalising the deformed preprojective algebras of \cite{CBH}, and also to symplectic situations where the deformations produced are related to symplectic reflection algebras.
\end{abstract}

\tableofcontents
\addcontentsline{toc}{chapter}{Contents}

\section{Introduction}
\subsection{Overview}

A tilting bundle $T$ on a scheme $X$ induces an equivalence between the derived category of quasicoherent sheaves on $X$ and the derived category of right modules for the algebra $A:=\End_{X}(T)$. This is a valuable tool which allows homological properties to be understood and calculated in either the algebraic or geometric setting.  Deformation theory is controlled by homological data, and it is expected that if a scheme is derived equivalent to an algebra then its deformations should be derived equivalent to certain deformations of the algebra. It is anticipated that a tilting bundle should lift to deformations of the scheme and that the endomorphism algebra of the lifted tilting bundle should produce deformations of the algebra \cite[Section 3.8]{BHTilting}. We will prove this result both for deformations over complete local Noetherian schemes and for $\mathbb{C}^*$-equivariant deformations over affine schemes with unique $\mathbb{C}^*$-invariant closed point.

These results can be viewed from two intertwined perspectives. On the one hand they produce noncommutative algebras derived equivalent to geometric deformations, and so construct noncommutative counterparts to interesting geometric phenomenon. One example is the simultaneous resolution of the \emph{Artin component} for quotient surface singularities. In this situation a quotient surface singularity has a versal deformation, and the Artin component of this has a simultaneous resolution after base change which can be realised as the versal deformation of the minimal resolution.  The existence of a derived equivalent noncommutative algebra for quotient singularities $\mathbb{C}^2/G$ with $G< \GL_2(\mathbb{C})$ was proposed by Riemenschneider \cite{RiemenschneiderMcKay}, and we discuss the construction of such an algebra as one of our applications below. Further, realising these algebras as deformations makes it possible to actually calculate them in many examples. 

On the other hand, the results construct deformations of noncommutative algebras via geometric techniques, with the advantage of finding exactly the deformations of noncommutative algebras that correspond to geometric deformations. For $G$ a finite subgroup of $\SL_n(\mathbb{C})$ the skew group algebras $\mathbb{C}[x_1, \dots, x_n] \rtimes G$ always provide noncommutative crepant resolutions of the quotient singularities $\mathbb{C}^n/G$ \cite{NCCRs}, and as Koszul algebras their PBW deformations are classified by the results of Braverman and Gaitsgory \cite{BrG}. However, considering the case of a small, finite subgroup of $\GL_2(\mathbb{C})$ then minimal resolutions exist but if $G$ is not a subgroup of $\SL_2(\mathbb{C})$ then the skew group algebras are not derived equivalent to the minimal resolutions and have no PBW deformations \cite[Theorem 5.0.6.]{CYPBW} despite there being non-trivial deformations of both the corresponding quotient singularity and minimal resolution.

In these cases a better noncommutative resolution is the reconstruction algebra, which is defined as the endomorphism algebra of a specific tilting bundle on the minimal resolution \cite{WemyssGL2}. However, these algebras are not Koszul and as such there is a lack of available technology to calculate their graded deformations, and in general their graded deformations are unknown. Previous attempts have been made to understand the deformations in specific examples via diamond lemma techniques, \cite{StefanDavidProject}, and these highlight the difficulty of calculating deformations without the Koszul deformation theorems. Our result works outside of the Koszul setting, lifting certain $\mathbb{C}^*$-equivariant deformations of the geometry to graded deformations of the algebra via the tilting bundle. This produces precisely the graded deformations of the algebra which are derived equivalent to commutative deformations of the geometry, and so we are able to calculate certain deformations of the reconstruction algebras as one of our applications.

\subsection{Lifting tilting bundles to deformations} We give an overview of the main results. The first result we will prove is the following, where the definitions of deformations of schemes and algebras are recalled in Section \ref{Deformations of Schemes}.

\begin{PMain}[Theorem \ref{Main}] \label{PMain} Suppose that $p:X_0 \rightarrow \Spec(R_0)$ is a projective morphism of  Noetherian schemes, that $\left( \rho:X \rightarrow \Spec\,D, j_d:X_0 \rightarrow X \right)$ is a deformation of $X_0$ over a complete local Noetherian $\mathbb{C}$-algebra $(D,d)$, and suppose that $\rho$ factors through a projective morphism $\pi:X \rightarrow \Spec(R)$ where $R$ is flat over $D$ and $R \otimes_D D/d \cong R_0$. 

Then any tilting bundle $T_0$ on $X_0$ lifts uniquely to a tilting bundle $T$ on $X$. Further, if $A_0=\End_{X_0}(T_0)$ then  $A:=\End_{X}(T)$ is a $D$-algebra which is flat as a $D$-module, there is a map $A \rightarrow A_0$ such that $A \otimes_{D} D/d \cong A_0$, and this map and isomorphism are uniquely defined by $j_d$, so the deformation of $X_0$ defines a deformation of $A_0$. 
\end{PMain}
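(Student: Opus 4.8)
The plan is to construct the lift $T$ one infinitesimal step at a time along the tower of thickenings, algebraise the resulting formal bundle, and then promote the tilting conditions from the central fibre to the total space by base change. Write $X_n := X \times_{\Spec D} \Spec(D/d^{n+1})$, so $X_0$ is the given central fibre and each closed immersion $X_{n-1} \hookrightarrow X_n$ is a square-zero thickening whose ideal $I_n = d^nD/d^{n+1}D$ is a finite-dimensional $D/d$-module. First I would lift $T_0$ through this tower using the deformation theory of coherent sheaves: given a locally free lift $T_{n-1}$ on $X_{n-1}$, the obstruction to extending it across $X_{n-1}\hookrightarrow X_n$ lies in $\Ext^2_{X_0}(T_0,T_0)\otimes_{D/d} I_n$, and when this vanishes the set of lifts is a torsor under $\Ext^1_{X_0}(T_0,T_0)\otimes_{D/d} I_n$. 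Since $T_0$ is tilting we have $\Ext^{>0}_{X_0}(T_0,T_0)=0$, so at every stage a lift exists and is unique up to isomorphism, yielding a compatible system $\{T_n\}$ reducing to $T_0$, i.e. a coherent sheaf on the formal scheme $\widehat{X}=\varinjlim X_n$ which is formally locally free.

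Next I would pass from this formal bundle to an honest bundle on $X$. Because $\pi\colon X\to\Spec R$ is projective and the base is $d$-adically complete (this is where completeness of $D$ enters, via the $d$-adic completion of $R$, which is harmless since only the formal neighbourhood of the central fibre is at play), Grothendieck's existence theorem identifies coherent sheaves on $X$ with coherent sheaves on $\widehat{X}$. This algebraises $\{T_n\}$ to a coherent sheaf $T$ on $X$ with $T\otimes_D D/d\cong T_0$; local freeness of $T$ then follows from the local criterion for flatness and Nakayama, since $T$ is $D$-flat with locally free reduction $T_0$. Uniqueness of $T$ is inherited from uniqueness of the formal lift together with the full faithfulness in Grothendieck's theorem.

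I then verify that $T$ is tilting on $X$. The key device is that $\RHom_X(T,T)$ is a perfect complex over $D$ (by projectivity of $\pi$ and $D$-flatness of $T$), and derived base change gives $\RHom_X(T,T)\otimes^{\mathbf{L}}_D D/d\cong\RHom_{X_0}(T_0,T_0)$. The right-hand side is concentrated in degree $0$, so a graded Nakayama argument over the complete local ring $D$ forces $\RHom_X(T,T)$ into degree $0$ as well, i.e. $\Ext^{>0}_X(T,T)=0$. For generation I would show that any $C\in D^b(\Coh X)$ with $\RHom_X(T,C)=0$ reduces, via derived restriction to $X_0$ and completeness of $D$, to an object annihilated by $\RHom_{X_0}(T_0,-)$; since $T_0$ generates, $C=0$, so $T$ generates.

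Finally set $A:=\End_X(T)=\mathrm{H}^0\RHom_X(T,T)$. As a perfect $D$-complex concentrated in degree $0$, $A$ is a finitely generated projective, hence flat, $D$-module, and the base-change isomorphism of the previous step gives $A\otimes_D D/d\cong\End_{X_0}(T_0)=A_0$ with reduction map $A\to A_0$; naturality of base change together with uniqueness of the lift shows this data depends only on $j_d$, so $(A, A\to A_0)$ is a well-defined deformation of $A_0$. The principal obstacle is the combination of the algebraisation step with the verification of \emph{generation} on the total space: producing an actual rather than merely formal bundle requires the interaction of projectivity over $\Spec R$ with completeness of the base, and lifting generation from $X_0$ to $X$ is where completeness of $D$ and a careful limit/Nakayama argument are indispensable, whereas the $\Ext$-vanishing is a comparatively formal consequence of base change once perfectness is established.
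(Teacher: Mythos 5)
Your overall architecture --- lifting through the Artinian tower $X_n$ by obstruction theory, algebraising the resulting formal bundle, and then transporting the tilting conditions from the central fibre by derived base change plus a Nakayama-type argument --- is the same as the paper's (its Lemma \ref{TiltingDeform} followed by Lemma \ref{BetterGeneration}); indeed your formulation of the obstruction spaces as $\Ext^2_{X_0}(T_0,T_0)\otimes_{D/d}I_n$ is, if anything, cleaner than the paper's stage-by-stage appeal to Hartshorne. But there is a genuine error at the key finiteness step. You assert that $\RHom_X(T,T)$ is a perfect complex over $D$ ``by projectivity of $\pi$ and $D$-flatness of $T$'', and you conclude that $A$ is a finitely generated projective $D$-module. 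This is false: $X$ is proper over $\Spec(R)$, not over $\Spec(D)$ --- the morphism $\alpha:\Spec(R)\rightarrow\Spec(D)$ is affine with positive-dimensional fibres --- so each $\Ext^i_X(T,T)$ is a finite $R$-module but almost never a finite $D$-module. Already over the closed fibre, $A_0=\End_{X_0}(T_0)$ is a finite $R_0$-algebra which is infinite-dimensional over $\mathbb{C}=D/d$ (e.g.\ the reconstruction algebras in the applications), so $A$ cannot be finite, let alone projective, over $D$; the theorem claims only flatness. With perfectness gone, your ``Nakayama argument over the complete local ring $D$'' has no finiteness hypothesis to run on, and this is exactly the point where the proof needs a real input.

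The paper's fix keeps the finiteness where it actually lives, namely over $R$: by projectivity of $\pi$ one has $\mathbf{R}\pi_*\RCalHom_X(T,T)\in D^b(R\text{-}\mod)$ (Lemma \ref{Bounded Push Down}), derived base change (Lemmas \ref{Derived Base Change} and \ref{DualPullback}) identifies its derived restriction to the fibre over the unique closed point $d$ with $\mathbf{R}\Hom_{X_0}(T_0,T_0)$, and then Bridgeland's spectral-sequence lemma (Lemma \ref{VitalLemma1}, packaged as Corollary \ref{BaseChangeCorollary}) shows that a bounded coherent complex over $\Spec(R)$ whose derived restriction to that fibre is a sheaf is itself a coherent sheaf, flat over $\Spec(D)$; Nakayama is applied there to coherent $R$-modules, whose supports meet the fibre over $d$, never to $D$-modules. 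The same machinery is what your generation step silently requires: from $\mathbf{R}\Hom_{X_0}(T_0,\mathbf{L}j_d^*\mathcal{F})=0$ you get $\mathbf{L}j_d^*\mathcal{F}=0$, but to conclude $\mathcal{F}=0$ you must first invoke Lemma \ref{VitalLemma1} to see that $\mathcal{F}$ is a coherent sheaf flat over $\Spec(D)$ and only then apply Nakayama on $X$; note also that the criterion of Remark \ref{RemarkTU} requires testing against all of $D^{-}(\Coh\,X)$, not merely $D^b(\Coh\,X)$ as you wrote. Once these repairs are made, your argument coincides with the paper's.
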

In particular, a deformation of $X_0$ consists of a flat morphism $\rho:X \rightarrow \Spec(D)$ and an identification of $X_0$ with the fibre of $\rho$ over $d \in \Spec(D)$ provided by the morphism $j_d:X_0 \rightarrow X$.
\begin{IntroFibreCorollary}Suppose that $(D,d)$ is a complete local Noetherian ring and $\rho:X \rightarrow \Spec(D)$ is a flat morphism of Noetherian schemes that factors through a projective morphism $\pi:X \rightarrow \Spec(R)$ and flat affine morphism $\alpha:\Spec(R) \rightarrow \Spec(D)$. If there is a tilting bundle $T_d$ on the fibre $X_d$ of $X$ over $d$, then there is a tilting bundle $T$ on $X$ such that $\End_X(T)$ is flat over $\Spec(D)$ and $\End_X(T) \otimes_D D/d \cong \End_{X_d}(T_d)$.
\end{IntroFibreCorollary}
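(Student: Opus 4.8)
The plan is to recognise this corollary as a direct specialisation of Theorem \ref{Main}, obtained by taking the scheme $X_0$ of that theorem to be the fibre $X_d$ itself. First I would set $X_0 := X_d$, the scheme-theoretic fibre of $\rho$ over the closed point $d \in \Spec(D)$, let $j_d : X_0 \hookrightarrow X$ denote the natural closed immersion of this fibre, and put $T_0 := T_d$. The candidate for the base downstairs is $R_0 := R \otimes_D D/d$, and $T_0$ is a tilting bundle on $X_0$ by hypothesis.

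The substance of the argument is to verify that the data $(p : X_0 \to \Spec(R_0),\ \rho : X \to \Spec(D),\ j_d)$ meets every hypothesis of Theorem \ref{Main}. The morphism $p : X_0 \to \Spec(R_0)$ arises as the base change of the projective morphism $\pi : X \to \Spec(R)$ along $\Spec(D/d) \to \Spec(D)$, using the identification $\Spec(R) \times_{\Spec(D)} \Spec(D/d) = \Spec(R \otimes_D D/d) = \Spec(R_0)$; since projective morphisms are stable under base change, $p$ is projective, and it is a morphism of Noetherian schemes because $R_0$ is a quotient of the Noetherian ring $R$ and $X_0$ is a closed subscheme of the Noetherian scheme $X$. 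The pair $(\rho, j_d)$ is a deformation of $X_0$ over $(D,d)$ since $\rho$ is flat by assumption and $j_d$ realises $X_0$ as the fibre over $d$ by construction. Finally $\rho$ factors through $\pi$ by hypothesis, the ring $R$ is flat over $D$ because the affine morphism $\alpha$ is flat, and $R \otimes_D D/d \cong R_0$ by the definition of $R_0$.

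With all the hypotheses in place, Theorem \ref{Main} produces a tilting bundle $T$ on $X$ lifting $T_0 = T_d$, together with the statements that $A := \End_X(T)$ is flat over $D$ and that $A \otimes_D D/d \cong \End_{X_0}(T_0) = \End_{X_d}(T_d)$; this is precisely the conclusion sought. The only point that is not purely routine is matching the conventions: one must confirm that the tautological pair in which $X_0$ is literally the fibre and $j_d$ its inclusion does satisfy the definition of a deformation used in Theorem \ref{Main}, so that the flatness of $\rho$ and the identification of $X_0$ with the fibre over $d$ both align with what that definition demands. Once this bookkeeping is settled the corollary follows immediately, and I expect no genuine obstacle beyond it.
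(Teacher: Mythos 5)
Your proposal is correct and matches the paper's own (implicit) argument: the corollary is stated right after Theorem \ref{Main} precisely because the flat family $\rho$, together with the inclusion of its fibre $X_d$, is tautologically a deformation of $X_d$ over $(D,d)$, with $R_0 = R\otimes_D D/d$ serving as the affine base of the special fibre. Your verification of the hypotheses (projectivity of $p$ by base change, flatness of $R$ over $D$ via $\alpha$, and the deformation condition) is exactly the bookkeeping the paper leaves to the reader.
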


This theorem relies on the assumption that the deformation is over a scheme with a unique closed point, restricting us to considering  deformations over Artinian local or Noetherian complete local $\mathbb{C}^*$-algebras. There is a natural extension which maintains a unique privileged point in a non-local setting: working with deformations over finite type schemes with $\mathbb{C}^*$-actions and a unique $\mathbb{C}^*$-fixed closed point.

We then prove the $\mathbb{C}^*$-equivariant extension of the above result, noting that $\mathbb{C}^*$-equivariant deformations of a scheme and graded deformations of algebras are defined in Section \ref{GradedDeformationsandSchemes}.

\begin{PMainGraded}[Theorem \ref{Graded Main}] \label{GradedPMain} 

Suppose $p:X_0 \rightarrow \Spec(R_0)$ is a $\mathbb{C}^*$-equivariant projective morphism of finite type schemes with $\mathbb{C}^*$-actions, that $X_0$ is normal, and suppose that the $\mathbb{C}^*$-action on $\Spec(R_0)$ has a unique fixed closed point.  Let $\left(\rho:X \rightarrow \Spec \, D, j_d:X_0 \rightarrow X \right)$ be a $\mathbb{C}^*$-equivariant deformation of $X_0$ over $(D,d)$ where $(D,d)$ is a graded complete local Noetherian ring with $\mathbb{C}^*$-action on $\Spec(D)$ having unique invariant closed point, and such that $\rho$ factors through a projective morphism $\pi:X \rightarrow \Spec(R)$ where $R$ is flat over $D$ with good $\mathbb{C}^*$-action and $R \otimes_D D/d \cong R_0$.

Then any $\mathbb{C}^*$-equivariant tilting bundle $T_0$ on $X_0$ lifts to a $\mathbb{C}^*$-equivariant tilting bundle $T$ on $X$. Further, if $A_0=\End_{X_0}(T_0)$ then  $A:=\End_{X}(T)$ is a graded $D$-algebra which is flat and graded as a $D$-module, there is a graded map $A \rightarrow A_0$ such that $A \otimes_{D} D/d \cong A_0$, and this map and isomorphism are uniquely defined by $j_d$ so the $\mathbb{C}^*$-equivariant deformation of $X_0$ defines a graded deformation of $A_0$. 
\end{PMainGraded}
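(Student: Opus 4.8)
The plan is to reduce to the non-equivariant Theorem \ref{Main} by forgetting the $\mathbb{C}^*$-structure, and then to reinstate equivariance, and hence the grading, using the \emph{uniqueness} of the lift. First I would check that, after forgetting all $\mathbb{C}^*$-actions, the hypotheses of Theorem \ref{Main} hold: a graded complete local Noetherian ring $(D,d)$ is in particular a complete local Noetherian $\mathbb{C}$-algebra, $p$ and $\pi$ are projective morphisms of Noetherian schemes, $R$ is flat over $D$ with $R \otimes_D D/d \cong R_0$, and $T_0$ is a tilting bundle on $X_0$. Theorem \ref{Main} then produces a tilting bundle $T$ on $X$ lifting $T_0$, \emph{unique} up to isomorphism, a $D$-flat algebra $A := \End_X(T)$, and an isomorphism $A \otimes_D D/d \cong A_0$ determined by $j_d$.

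The crux is to promote the underlying bundle $T$ to a $\mathbb{C}^*$-equivariant one. Writing $\sigma, \mathrm{pr} \colon \mathbb{C}^* \times X \to X$ for the action and the projection, an equivariant structure on $T$ is an isomorphism $\sigma^* T \cong \mathrm{pr}^* T$ satisfying the cocycle condition on $\mathbb{C}^* \times \mathbb{C}^* \times X$. The strategy is to argue that $\sigma^* T$ and $\mathrm{pr}^* T$ are both lifts of the equivariant bundle $T_0$ and so, by the uniqueness of the lift, are canonically isomorphic, and then to force the cocycle condition by appealing to uniqueness once more. This endows $T$ with an equivariant structure extending that of $T_0$, compatible with $j_d$ and with the grading on $D$ since every morphism involved is produced by the uniqueness property.

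I expect this equivariant-lifting step to be the main obstacle, because the uniqueness furnished by Theorem \ref{Main} is only available over a complete local base, whereas equivariance is a statement in families over the non-local group $\mathbb{C}^*$. The plan to overcome this is to observe that the properties characterising the lift --- vanishing of higher self-extensions, generation of the derived category, and $D$-flatness --- are each preserved by the $\mathbb{C}^*$-action, so that the construction can be carried out directly in the equivariant category; equivalently, completion at the unique fixed closed point should lose no equivariant information. It is precisely here that the hypotheses that $X_0$ is normal and that the $\mathbb{C}^*$-actions on $\Spec(R_0)$ and $\Spec(R)$ are good, with a unique invariant closed point, enter: they guarantee that the finite-type graded geometry is faithfully reflected by its completion, so that the uniquely determined lifts assemble into a genuine equivariant bundle.

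Granting the equivariant structure on $T$, the algebra $A = \End_X(T)$ inherits a $\mathbb{C}^*$-action, equivalently a grading $A = \bigoplus_i A_i$ compatible with that on $D$, making $A$ a graded $D$-algebra which is graded and flat as a $D$-module. The flatness and the isomorphism $A \otimes_D D/d \cong A_0$ are already supplied by Theorem \ref{Main}; since the lift, the evaluation map $A \to A_0$, and the base-change isomorphism were all produced $\mathbb{C}^*$-equivariantly above, these are maps of graded objects. Together with graded flatness this is exactly the statement that the $\mathbb{C}^*$-equivariant deformation of $X_0$ defines a graded deformation of $A_0$, the uniqueness of the identifications being inherited from the uniqueness in Theorem \ref{Main} together with that of the equivariant structure.
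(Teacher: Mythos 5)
Your opening reduction fails at its first claim: a graded complete local Noetherian ring is \emph{not} ``in particular a complete local Noetherian $\mathbb{C}$-algebra.'' The paper is explicit about this (see the remark following the graded definitions): $\mathbb{C}[x]$ with $x$ in degree $1$ is graded complete local but is neither local nor complete, and in the intended applications $D$ is a polynomial ring such as $\mathbb{C}[\beta_1,\dots,\beta_k]$. So Theorem \ref{Main} cannot be invoked over $D$ at all, and the non-equivariant lift $T$, its uniqueness, the flatness of $A$, and the isomorphism $A\otimes_D D/d\cong A_0$ that your whole argument leans on are not available at the outset. This is not a technicality you can wave past: the entire difficulty of the graded theorem is that the base has a unique \emph{invariant} closed point but many other closed points, so the infinitesimal/complete-local machinery does not apply directly.

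The paper's proof is structured to get around exactly this. It first passes to the completion $\frak{D}$ of $D$ at $d$ (and $\frak{R}=R\otimes_D\frak{D}$, $\frak{X}=X\times_{\Spec(D)}\Spec(\frak{D})$), where the ungraded Lemmas \ref{TiltingDeform} and \ref{BetterGeneration} do apply; there it promotes the lift to a $\mathbb{C}^*$-equivariant one by the uniqueness trick (Lemma \ref{GradedTiltingDeform}): $\sigma^*T$ and $\beta^*T$ are both unique lifts of $\sigma_0^*T_0\cong\beta_0^*T_0$ to $X\times\mathbb{C}^*$ viewed as a deformation over the \emph{complete local} base, and the cocycle conditions follow by comparing with the identity via uniqueness. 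This part of your proposal is genuinely the right idea, but it only works over $\frak{D}$, not over $D$. The remaining, and hardest, step is descending from $\frak{X}$ to $X$ (Proposition \ref{GradedTiltingLift}): the equivariant bundle descends by Namikawa's Proposition A.6 (this is where the good $\mathbb{C}^*$-action is used), Ext-vanishing descends by flat base change plus the graded Nakayama lemma (Lemma \ref{GradedNakayama}), and generation must be \emph{re-proved} on $X$ by showing the unit and counit of the adjunction are isomorphisms, testing the counit against the powers of a $\mathbb{C}^*$-equivariant ample line bundle --- whose existence is Sumihiro's theorem (Lemma \ref{Equivariant Ample Line Bundle}) and is precisely why normality of $X$ is assumed --- and then killing the cone via the graded Bridgeland lemma (Lemma \ref{GradedVitalLemma1}) and graded Nakayama. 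Your sentence that ``completion at the unique fixed closed point should lose no equivariant information'' is an assertion of exactly this proposition, not a proof of it; without it (and without restructuring the argument to run completion first, equivariance second, descent third) the proposal does not establish the theorem.
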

This gives the following corollary.
\begin{IntroFibreGradedCorollary} Suppose that $(D,d)$ is a complete graded local Noetherian ring and $\rho:X \rightarrow \Spec(D)$ is a $\mathbb{C}^*$-equivariant flat morphism of Noetherian schemes that factors through a $\mathbb{C}^*$-equivariant projective morphism $\pi:X \rightarrow \Spec(R)$ and $\mathbb{C}^*$-equivariant flat affine morphism $\alpha:\Spec(R) \rightarrow \Spec(D)$. If there is a $\mathbb{C}^*$-equivariant tilting bundle $T_d$ on the fibre $X_d$ of $X$ over $d$ then there is a $\mathbb{C}^*$-equivariant tilting bundle $T$ on $X$ such that $\End_X(T)$ is flat over $\Spec(D)$  and $\End_X(T) \otimes_D D/d \cong \End_{X_d}(T_d)$.
\end{IntroFibreGradedCorollary}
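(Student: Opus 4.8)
The plan is to deduce the statement directly from Theorem~\ref{Graded Main} by reinterpreting the total space $X$, together with the closed immersion of its special fibre, as a $\mathbb{C}^*$-equivariant deformation of that fibre. First I would set $X_0 := X_d$, take $j_d : X_0 \rightarrow X$ to be the canonical closed immersion of the fibre over $d$, and put $R_0 := R \otimes_D D/d$. Base-changing the factorisation $\rho = \alpha \circ \pi$ along the fixed closed point $\Spec(D/d) \hookrightarrow \Spec(D)$ then produces a morphism $p : X_0 \rightarrow \Spec(R_0)$, and I would feed $p$, the deformation $(\rho, j_d)$, and the tilting bundle $T_0 := T_d$ into Theorem~\ref{Graded Main}.

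The bulk of the work is then to confirm that this fibre data meets the hypotheses of Theorem~\ref{Graded Main}. Since $\rho$ is flat and $X_0$ is by construction its fibre over $d$, the pair $(\rho, j_d)$ is a $\mathbb{C}^*$-equivariant deformation of $X_0$ over $(D,d)$, and the isomorphism $R \otimes_D D/d \cong R_0$ holds by definition. Projectivity and $\mathbb{C}^*$-equivariance are stable under base change, so $p$ is a $\mathbb{C}^*$-equivariant projective morphism, and the good $\mathbb{C}^*$-action on $R$ over $D$ restricts to a good $\mathbb{C}^*$-action on $R_0$, realising $\Spec(R_0)$ and hence $X_0$ as finite type schemes with $\mathbb{C}^*$-actions. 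Because $d$ is the unique invariant closed point of $\Spec(D)$ and the restricted action on $R_0$ is good, $\Spec(R_0)$ has a unique $\mathbb{C}^*$-fixed closed point, namely the point lying over $d$.

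The one hypothesis of Theorem~\ref{Graded Main} that is not visibly encoded in the corollary is the normality of the fibre $X_0 = X_d$, and verifying this is where I expect the real care to be needed: here one must use that $\rho$ is flat and factors through the good affine $\mathbb{C}^*$-action $\alpha$ in order to control the special fibre. Granting this, Theorem~\ref{Graded Main} applies to the $\mathbb{C}^*$-equivariant tilting bundle $T_0 = T_d$ on $X_0$ and yields a $\mathbb{C}^*$-equivariant tilting bundle $T$ on $X$ whose endomorphism algebra $\End_X(T)$ is a graded $D$-algebra, flat over $D$, admitting a graded isomorphism $\End_X(T) \otimes_D D/d \cong \End_{X_0}(T_0) = \End_{X_d}(T_d)$. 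This is exactly the assertion of the corollary, so the whole argument reduces to the verification of hypotheses above combined with the cited theorem.
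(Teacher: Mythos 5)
Your proposal matches the paper's intended argument exactly: the paper offers no separate proof of this corollary, presenting it as an immediate consequence of Theorem \ref{Graded Main} obtained by viewing $(\rho, j_d:X_d \rightarrow X)$ as a $\mathbb{C}^*$-equivariant deformation of the fibre $X_0 := X_d$ with $R_0 := R \otimes_D D/d$, which is precisely your reduction. The normality (and good-action, finite-type) hypotheses you flag as needing care are likewise not verified anywhere in the paper — they are implicitly inherited from the standing assumptions of Theorem \ref{Graded Main} (where, note, it is the total space $X$ rather than the fibre that Section \ref{Graded Results} assumes normal, since Sumihiro's theorem is applied to $X$) — so treating them as given rather than as something to prove is consistent with how the paper reads this corollary.
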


An advantage of graded deformations is that there may be closed points in $\Spec(D)$ other than the unique invariant point $d$, and we may consider fibres over these points.

\begin{PFilteredCorollary}[Corollary \ref{FilteredCorollary}] \label{PFilteredCorollary} Suppose that $A_0$ is positively graded ring, and suppose that $j_z: z \hookrightarrow \Spec(D)$ is the inclusion of a closed point. Then $T_z:=j_z^*T$ is a tilting bundle on the fibre $X_z:=X \times_{\Spec \, D} z$ such that $A_z:=\End_{X_z}(T_z) \cong A \otimes_D D/z$ is a filtered algebra with associated graded algebra isomorphic to $A_0$.
\end{PFilteredCorollary}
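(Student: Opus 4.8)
The plan is to deduce this corollary from the graded lifting theorem (Theorem~\ref{Graded Main}, i.e.\ the statement \ref{GradedPMain} above) by exploiting the interaction between the $\mathbb{C}^*$-action, the grading, and restriction to a closed point. First I would observe that, since $A_0$ is positively graded and $A=\End_X(T)$ is the graded $D$-algebra produced by the graded lifting theorem — flat and graded as a $D$-module, with $A\otimes_D D/d\cong A_0$ — the algebra $A$ carries a grading compatible with the $\mathbb{C}^*$-action on $X$, and $D$ is graded complete local. The corollary asks us to examine fibres not over the distinguished invariant point $d$ but over an \emph{arbitrary} closed point $z\in\Spec(D)$.

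\textbf{Step 1: $T_z$ is a tilting bundle.} I would first verify that the restriction $T_z:=j_z^*T$ remains a tilting bundle on the fibre $X_z$. This should follow from a base-change argument of exactly the type already used to establish the main theorems: flatness of $\rho$ (equivalently of $R$ over $D$) guarantees that the fibre $X_z$ is well behaved and that cohomology commutes with the base change along $j_z:z\hookrightarrow\Spec(D)$. Because $T$ is a bundle (locally free), its restriction $j_z^*T$ is again a bundle, and the tilting conditions — vanishing of higher self-extensions and generation of the derived category — are preserved under flat base change to a closed point. Concretely I expect this to reduce to the same derived/flat base change lemmas invoked in the proof of Theorem~\ref{Graded Main}, now applied at $z$ rather than $d$; the identification $A_z:=\End_{X_z}(T_z)\cong A\otimes_D D/z$ is then the corresponding statement that forming endomorphisms commutes with this base change, again using flatness of $A$ over $D$.

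\textbf{Step 2: the filtered/associated-graded statement.} The heart of the corollary is the claim that $A_z$ is a \emph{filtered} algebra whose associated graded algebra is isomorphic to $A_0$. Here the key point is that $z$ is a closed point but $D$ is graded with a unique \emph{invariant} closed point $d$, and the $\mathbb{C}^*$-action does not fix $z$ in general. The grading on $A$ by $\mathbb{C}^*$-weights does not descend to a grading on $A_z$ (since the ideal defining $z$ is not homogeneous unless $z=d$), but it \emph{does} induce a filtration: one filters $A_z$ by the images of the pieces $\bigoplus_{i\le n} A_i$ under the quotient $A\twoheadrightarrow A_z=A/zA$. I would then compute the associated graded $\mathrm{gr}\,A_z$ and identify it with $A\otimes_D D/d\cong A_0$. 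The mechanism is the standard one: since $A$ is flat (indeed graded-free) over the graded ring $D$, the maximal graded ideal $d$ and the point $z$ have the same leading-order behaviour, so passing to $\mathrm{gr}$ with respect to the induced filtration recovers the fibre over the homogeneous point $d$. I would make this precise by invoking a graded Nakayama/Rees-ring argument (cf.\ the earlier lemmas on Rees rings and graded deformations), viewing $A$ as giving rise to a Rees construction over $D$ whose special fibre is $A_0$ and whose other fibres are the filtered algebras $A_z$.

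\textbf{Main obstacle.} The routine part is Step~1; the substantive point is Step~2, and specifically checking that the filtration on $A_z$ induced from the grading on $A$ is \emph{exhaustive and separated} and that taking associated graded commutes with the base change $D\to D/z$ so as to yield exactly $A_0$ rather than merely a graded algebra of the right dimensions. The hard part will be controlling this interplay when $z$ is not the invariant point: one must ensure that the positive grading on $A_0$ (the hypothesis that makes the filtration bounded below) together with flatness of $A$ over $D$ forces $\mathrm{gr}\,A_z\cong A_0$ as graded algebras, and not just an isomorphism of the underlying filtered pieces. I would handle this by reducing to the invariant point via the $\mathbb{C}^*$-action — translating $z$ toward $d$ along the flow — and then appealing to the already-established isomorphism $A\otimes_D D/d\cong A_0$ from Theorem~\ref{Graded Main}.
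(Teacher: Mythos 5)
Your proposal is correct and takes essentially the same route as the paper: your Step 1 is what the paper obtains by citing \cite[Proposition 2.9]{BHTilting}, and your Step 2 --- filtering $A_z$ by the images of the graded pieces of $A$, running a Rees-ring argument, and reducing $z$ to the invariant point via the $\mathbb{C}^*$-flow --- is exactly Lemma \ref{ReesRingGradedDeformations} combined with Remark \ref{ReesFibresRemark}, where the graded morphism $D \rightarrow \mathbb{C}[t]$ is the algebraic incarnation of the flow you describe. The technical worries you flag (exhaustiveness and separatedness of the filtration, compatibility of the associated graded with base change) are precisely what the cited Braverman--Gaitsgory lemma \cite[Lemmas 1.3 and 1.4]{BrG} settles after that reduction to a one-parameter base.
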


In this situation many properties of the fibres $A_z$ can be deduced from $A$ via the associated graded property without having to understand the larger algebra $A$ itself.

\subsection{Applications}
We briefly outline the applications we will consider in Section \ref{Applications}. In the ungraded setting we consider minimal resolutions of rational surfaces singularities, and in the graded setting we consider both surface quotient singularities and symplectic quotient singularities.

\subsubsection*{Minimal resolutions of rational surface singularities and reconstruction algebras} 

There has been significant interest in the deformation theory of minimal resolutions of rational surface singularities since the work of \cite{Artin} with the aim of understanding these versal deformations as simultaneous resolutions of deformations of the singularity \cite{Riemanschneider,WahlSimultaneous}, and explicitly calculating certain examples \cite{RiemenschneiderNach,BehnkeRiemenschneider}. The minimal resolutions are known to carry tilting bundles and to be derived equivalent to the reconstruction algebras \cite{WemyssGL2}. 

The results of this paper produce a tilting bundle on the versal deformation of the minimal resolution and define a deformation of the corresponding reconstruction algebra derived equivalent to the geometric deformation.

\begin{pRationalMain}[Corollary \ref{RationalMain}] Let $\Spec(R_0)$ be a complete local, rational, surface singularity, $p:X_0 \rightarrow \Spec(R_0)$ be its minimal resolution, and $A_0=\End_{X_0}(T_0)$ its corresponding reconstruction algebra. Consider the versal deformation $(\rho:X \rightarrow \Spec \, D, j_d: X_0 \rightarrow X)$ of $X_0$. Then:
\begin{enumerate}
\item The tilting bundle $T_0$ on $X_0$ lifts to a tilting bundle $T$ on the versal deformation space  $X$ of $X_0$. 
\item The algebra $A:=\End_{X}(T)$ is derived equivalent to the versal deformation space $X$.
\item The algebra $A$ is a deformation of $A_0$ over $D$ induced by the deformation $(\rho:X \rightarrow \Spec \, D, j_d:X_0 \rightarrow X)$. 
\end{enumerate}
\end{pRationalMain}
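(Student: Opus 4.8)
The plan is to deduce the corollary directly from Theorem \ref{Main} by verifying that the versal deformation of the minimal resolution satisfies that theorem's hypotheses, and then reading off each of the three claims from its conclusion. First I would record the ingredients already in place: the minimal resolution $p:X_0 \rightarrow \Spec(R_0)$ is a projective birational morphism of Noetherian schemes, and by the work of Wemyss \cite{WemyssGL2} the scheme $X_0$ carries a tilting bundle $T_0$ with $A_0=\End_{X_0}(T_0)$ the reconstruction algebra. Thus the inputs $p$, $X_0$, $T_0$ and $A_0$ of Theorem \ref{Main} are supplied, and it remains only to produce the deformation datum $(\rho:X \rightarrow \Spec\,D, j_d)$ together with a factorisation of $\rho$ through a projective morphism $\pi:X\rightarrow\Spec(R)$ with $R$ flat over $D$ and $R\otimes_D D/d\cong R_0$.

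The essential geometric input is the deformation theory of rational surface singularities. For such a singularity the versal deformation of the minimal resolution $X_0$ exists over a complete local Noetherian $\mathbb{C}$-algebra $(D,d)$, matching the base required by Theorem \ref{Main}. By the results of Artin and Pinkham (Theorems \ref{ArtinRational} and \ref{PinkhamRational}, see also \cite{Artin,WahlSimultaneous}) this versal family $\rho:X\rightarrow\Spec\,D$ admits a simultaneous resolution structure: it blows down to a flat family $\pi:X\rightarrow\Spec(R)$ of deformations of the singularity over $\Spec(D)$ realising the Artin component. Concretely this gives a projective morphism $\pi:X\rightarrow\Spec(R)$ factoring $\rho$ through an affine morphism $\Spec(R)\rightarrow\Spec(D)$, with $R$ flat over $D$ because the family over the Artin component is flat, and with $R\otimes_D D/d\cong R_0$ because its central fibre is the original singularity $\Spec(R_0)$. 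I would package this, drawing on Corollary \ref{DeformsRational}, as the assertion that the hypotheses of Theorem \ref{Main} are met.

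With the hypotheses verified, Theorem \ref{Main} applies verbatim. Its conclusion that $T_0$ lifts uniquely to a tilting bundle $T$ on $X$ gives claim (1). Its conclusion that $A=\End_X(T)$ is flat over $D$ with a map $A\rightarrow A_0$ inducing $A\otimes_D D/d\cong A_0$, determined by $j_d$, gives claim (3). Claim (2) is then immediate: once $T$ is known to be a tilting bundle on $X$, the functor $\RHom_X(T,-)$ induces an equivalence $D^b(\Coh X)\simeq D^b(\mod A)$ by the defining property of a tilting bundle, so $A$ is derived equivalent to the versal deformation space $X$.

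The main obstacle lies entirely in the second paragraph: establishing that the versal deformation of the minimal resolution blows down to a flat family over the Artin component with the prescribed central fibre, that is, producing the factorisation through $\pi:X\rightarrow\Spec(R)$ with $R$ flat over $D$ and $R\otimes_D D/d\cong R_0$. This is precisely the classical but delicate simultaneous resolution theory for rational surface singularities, and once it is invoked in the form of Theorems \ref{ArtinRational} and \ref{PinkhamRational} the remaining steps are formal applications of Theorem \ref{Main} and of the definition of a tilting bundle.
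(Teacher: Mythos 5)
Your proposal is correct and takes essentially the same approach as the paper: the paper likewise verifies the hypotheses of Theorem \ref{Main} via Artin--Wahl simultaneous resolution theory (the versal deformation of $X_0$ is realised as the simultaneous resolution of the base-changed Artin component, giving the factorisation of $\rho$ through a projective $\pi:X\rightarrow \Spec(R)$ with $R$ flat over $D$ and $R\otimes_D D/d\cong R_0$) together with Wemyss's tilting bundle $T_0$, and then reads off claims (1)--(3) from Theorem \ref{Main} and the standard tilting equivalence. The only cosmetic difference is that the paper carries this out in the discussion preceding the corollary rather than in a displayed proof, citing Artin and Wahl directly, so the internal results you name (ArtinRational, PinkhamRational, DeformsRational) do not actually appear as stated theorems in the paper.
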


The existence of a noncommutative analogue to the simultaneous resolution of the Artin component was proposed by Riemenschneider \cite{RiemenschneiderMcKay}. This result produces this algebra as $A$ and shows that it occurs as a deformation of the reconstruction algebra and is produced via a tilting bundle. 

\subsubsection*{Minimal resolutions of surface quotient singularities and graded reconstruction algebras}

If we restrict the previous case to surface quotient singularities then the geometry has a natural $\mathbb{C}^*$-equivariant structure and the reconstruction algebra is naturally graded as a sub-algebra of the corresponding McKay quiver with relations.  
An application of our results will be to construct graded deformations of the reconstruction algebras, which will be derived equivalent to the versal deformation of the minimal resolutions in this non-local setting.  

\begin{pGL2Main}[Corollary \ref{GL2Main}] Let $G$ be a small, finite subgroup of $\GL_2(\mathbb{C})$, let $\mathbb{C}^2/G$ be the corresponding quotient singularity, let $p:X_0 \rightarrow \mathbb{C}^2/G$ be the $\mathbb{C}^*$-equivariant minimal resolution, and let $A_0=\End_{X_0}(T_0)$ be the corresponding graded reconstruction algebra. Consider the $\mathbb{C}^*$-equivariant versal deformation $(X \rightarrow \Spec \, D,j_d:X_0 \rightarrow X)$ of $X_0$. Then:
\begin{enumerate}
\item The $\mathbb{C}^*$-equivariant tilting bundle $T_0$ on $X_0$ lifts to a $\mathbb{C}^*$-equivariant tilting bundle $T$ on $X$.
\item The algebra $A:=\End_X(T)$ is derived equivalent to the versal deformation space $X$ of $X_0$.
\item The algebra $A$ is a graded deformation of $A_0$ over $D$ induced by the $\mathbb{C}^*$-equivariant versal deformation $(X \rightarrow \Spec \, D, j_d:X_0\rightarrow X)$.
\item For any closed point $z \in \Spec(D)$ the fibres $A_z:=A \otimes_D (D/z)$ are filtered algebras with the property that the associated graded algebra of $A_z$ is isomorphism to $A_0$.
\end{enumerate}
\end{pGL2Main}

When $G<\SL_2(\mathbb{C})$ then the fibres of $A$ are deformed preprojective algebras as defined by Crawley-Boevey and Holland \cite{CBH}. More generally the fibres of this graded reconstruction algebras give an analogue of the deformed preprojective algebras outside of a Koszul setting, generalising from the Kleinian singularity case to all surface quotient singularities.

The preprojective algebras have an additional deformation parameter which does not correspond to any geometric deformation and so cannot be constructed by Corollary \ref{GL2Main}. However, it is shown in Section \ref{NonGeometric} that there are no non-geometric graded deformations of a graded reconstruction algebra when $G \nless \SL_2(\mathbb{C})$. This is a major difference between the noncommutative deformations of the preprojective and reconstruction algebras, and this shows that Corollary \ref{GL2Main} constructs all graded deformations of the reconstruction algebras corresponding to $G\nless \SL_2(\mathbb{C})$ quotient surface singularities.

\subsubsection*{Crepant resolutions of symplectic quotient singularities and symplectic reflection algebras}
While symplectic resolutions of quotient singularities do not always exist, when they do they are known to have $\mathbb{C}^*$-equivariant tilting bundles with endomorphism algebras the skew group algebras \cite{SympMcKay}, and to have an interesting $\mathbb{C}^*$-equivariant deformation known as the versal Poisson or Calogero-Moser deformation \cite{NamikawaFlopsandPoisson,PoisDefKaledinGinzburg}. As such they provide a natural application of our results.

\begin{pSymplectRef}[Corollary \ref{SymplectRef}]
Let $G$ be a finite subgroup of $Sp_{2n}(\mathbb{C})$, let $\mathbb{C}^{2n}/G$ be the corresponding quotient singularity, and suppose that a crepant resolution $p:X_0 \rightarrow \mathbb{C}^{2n}/G$ exists. Consider the $\mathbb{C}^*$-equivalent tilting bundle $T_0$ constructed in \cite{SympMcKay} such that $\End_{X_0}(T_0)$ is the skew group algebra and let $(\rho:X \rightarrow \Spec \,D ,j_d:X_0 \rightarrow X)$ be the $\mathbb{C}^*$-equivariant versal Poisson deformation. Then:
\begin{enumerate}
\item The $\mathbb{C}^*$-equivariant tilting bundle $T_0$ on $X_0$ lifts to a $\mathbb{C}^*$-equivariant tilting bundle $T$ on $X$.
\item The algebra $A:=\End_{X}(T)$ is derived equivalent to the $\mathbb{C}^*$-equivariant versal Poisson deformation space $X$ of $X_0$.
\item The algebra $A$ is a graded deformation of the skew group algebra over $D$ induced by the $\mathbb{C}^*$-equivariant versal deformation $(X \rightarrow \Spec \, D, j_d:X_0 \rightarrow X)$.
\item For any closed point $z \in \Spec(D)$ the fibres $A_z:=A \otimes_D (D/z)$ are symplectic reflection algebras with noncommutative parameter $t$ set equal to 0.
\end{enumerate}
\end{pSymplectRef}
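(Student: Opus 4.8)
The plan is to realise this statement as a direct application of Theorem \ref{Graded Main} together with Corollary \ref{FilteredCorollary}, the only genuinely new content being the identification in part (4). First I would set up the geometry: writing $V=\mathbb{C}^{2n}$ with its symplectic $G$-action, the singularity is $\Spec(R_0)$ with $R_0=\mathbb{C}[V]^G$, and $X_0$ is the crepant resolution, which for symplectic quotient singularities is smooth and hence normal. The scaling $\mathbb{C}^*$-action on $V$ descends to a contracting action on $\mathbb{C}^{2n}/G$ with the image of the origin as its unique fixed closed point, and $p:X_0\rightarrow \Spec(R_0)$ is $\mathbb{C}^*$-equivariant and projective, so the hypotheses on $(X_0,R_0)$ in Theorem \ref{Graded Main} hold.

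Next I would verify that the versal Poisson deformation supplies an admissible deformation datum. By the work of Ginzburg--Kaledin and Namikawa \cite{PoisDefKaledinGinzburg,NamikawaFlopsandPoisson} the versal Poisson deformation $\rho:X\rightarrow \Spec(D)$ of $X_0$ is $\mathbb{C}^*$-equivariant and factors through the versal Poisson deformation $\pi:X\rightarrow \Spec(R)$ of the singularity $\mathbb{C}^{2n}/G$ itself, where the base $D$ is the coordinate ring of the parameter space, a graded ring carrying a good linear $\mathbb{C}^*$-action whose unique invariant closed point $d$ is the origin. Flatness of $R$ over $D$, the good $\mathbb{C}^*$-action on $R$, and the central-fibre identification $R\otimes_D D/d\cong R_0$ are exactly the defining properties of this family as a graded Poisson smoothing of the singularity. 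With these checked, Theorem \ref{Graded Main} applies and yields parts (1)--(3): the $\mathbb{C}^*$-equivariant tilting bundle $T_0$ of \cite{SympMcKay}, whose endomorphism algebra is the skew group algebra $\mathbb{C}[V]\rtimes G$, lifts to a $\mathbb{C}^*$-equivariant tilting bundle $T$ on $X$; the lift equips $A:=\End_X(T)$ with a tilting equivalence to $X$; and $A$ is a graded, flat $D$-algebra with $A\otimes_D D/d\cong \mathbb{C}[V]\rtimes G$, that is, a graded deformation of the skew group algebra.

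For part (4) I would combine Corollary \ref{FilteredCorollary} with the theory of symplectic reflection algebras. Base change along a closed point $z\in\Spec(D)$, corresponding to a parameter value $c$, gives $A_z\cong \End_{X_z}(T_z)$ for $X_z$ the Poisson-deformed fibre, and since $\mathbb{C}[V]\rtimes G$ is non-negatively graded with semisimple degree-zero part $\mathbb{C} G$, Corollary \ref{FilteredCorollary} identifies $A_z$ as a filtered algebra whose associated graded algebra is $\mathbb{C}[V]\rtimes G$. On the other hand, the symplectic reflection algebras $H_{0,c}$ at $t=0$ form a flat graded family over the parameter space with $H_{0,0}=\mathbb{C}[V]\rtimes G$, and the PBW theorem of Etingof and Ginzburg gives precisely $\gr H_{0,c}\cong \mathbb{C}[V]\rtimes G$, with each $H_{0,c}$ module-finite over its centre, the coordinate ring of the Calogero--Moser space realising $X_z$. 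The strategy is then to match these two families: both are $\mathbb{C}^*$-equivariant graded deformations of $\mathbb{C}[V]\rtimes G$ over $D$, both have the correct associated graded and central fibre, and both are derived equivalent to the same versal Poisson deformation $X$. The noncommutative parameter $t$ is forced to vanish because $A$ arises from deforming the commutative Poisson geometry rather than from a noncommutative direction, so the fibres can only be the $t=0$ specialisations.

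The main obstacle will be this final identification $A_z\cong H_{0,c}$. Our theorem produces $A_z$ abstractly as the endomorphism algebra of a restricted tilting bundle, whereas $H_{0,c}$ is defined by deformed smash-product relations, so the task is to match two a priori different algebras that are both derived equivalent to $X_z$ and both filtered deformations of $\mathbb{C}[V]\rtimes G$ with the same associated graded. I expect to close this gap using the uniqueness of the lifted tilting bundle in Theorem \ref{Graded Main}: realising the universal family $\{H_{0,c}\}$ as a graded deformation of $\mathbb{C}[V]\rtimes G$ through the tilting equivalence on the Poisson deformation, uniqueness of the lift pins $A$ down to this family and thereby identifies each fibre $A_z$ with the symplectic reflection algebra $H_{0,c}$ at $t=0$.
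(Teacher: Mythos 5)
Your treatment of parts (1)--(3) coincides with the paper's: you verify that the Ginzburg--Kaledin family (Lemma \ref{KaledinGinzburg}) provides the required factorisation $\rho=\alpha\circ\pi$ with $R$ flat over $D$, good $\mathbb{C}^*$-actions, and $R\otimes_D D/d\cong R_0$, that the tilting bundle of \cite{SympMcKay} (Lemma \ref{SymplecticTiltingBundle}) carries a $\mathbb{C}^*$-equivariant structure, and then you quote Theorem \ref{Graded Main}. That is exactly the paper's argument and there is nothing to add.

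Part (4) is where your proposal has a genuine gap. From Corollary \ref{FilteredCorollary} you correctly obtain that $A_z$ is a filtered algebra with associated graded $\mathbb{C}[V]\rtimes G$, but you then invoke Etingof--Ginzburg only in the direction ``each $H_{0,c}$ is a filtered deformation of $\mathbb{C}[V]\rtimes G$'', which cannot identify $A_z$ with any $H_{t,c}$. What is needed is the converse classification: under the standing hypotheses (here one must use that $G$ is generated by symplectic reflections, acts faithfully, and admits no invariant symplectic splitting), \emph{every} PBW, i.e.\ filtered, deformation of $\mathbb{C}[V]\rtimes G$ is isomorphic to some $H_{t,c}$. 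The paper treats this as the defining property of symplectic reflection algebras, citing \cite{EG}, so that $A_z\cong H_{t,c}$ for some $(t,c)$ is immediate. Your proposed substitute --- realising the whole family $\{H_{0,c}\}$ as a graded deformation induced by a tilting equivalence on $X$ and then invoking uniqueness of the lifted bundle --- is circular: knowing that the $H_{0,c}$ arise as endomorphism algebras of tilting bundles on the fibres of the Poisson deformation is precisely the Gordon--Smith theorem \cite{GordonSmithRepresentationsSRA}, which the paper recovers as a \emph{consequence} of this corollary, not an input; moreover Theorem \ref{Graded Main} as stated asserts existence, not uniqueness, of the graded lift, and uniqueness of a bundle lift could not by itself match two abstract graded deformations over $D$. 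Finally, your reason that $t$ must vanish (``$A$ arises from deforming commutative geometry'') is only heuristic; a correct argument is that $A_z=\End_{X_z}(T_z)$ with $X_z$ projective over $\Spec(R_z)$ is module-finite over its centre by Remark \ref{TiltingRemark}, whereas $H_{t,c}$ with $t\neq 0$ is infinite-dimensional over its centre $\mathbb{C}$, so the parameter supplied by the classification necessarily has $t=0$.
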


The symplectic reflection algebras are defined as fibres of graded deformations of the skew group algebras \cite{EG}. We note that when $G$ is a wreath product group a crepant resolution exists and these sympletic reflection algebras were already known to  be produced from tilting bundles by the results of Gordon and Smith \cite{GordonSmithRepresentationsSRA}. Our results provide a different proof of this result.

\subsection{Contents}
We outline the structure of the paper. In Section \ref{Preliminaries} we recall definitions and lemmas we will need in later sections. In Section \ref{Results} we prove our main results for infinitesimal deformations. In Section \ref{GradedCase} we recall definitions and preliminaries in the $\mathbb{C}^*$-action case, and then in Section \ref{Graded Results} we prove our main results in this case. Finally Section \ref{Applications} contains several applications of these results, and we calculate several new examples.

\subsection{Acknowledgments}
The author is student at the University of Edinburgh, funded via the Engineering and Physical Sciences Research Council doctoral training grant number [EP/J500410/1]. The author would like to express his thanks to his supervisors, Dr.\ Michael Wemyss and Prof.\ Iain Gordon, for much guidance and patience, and also to the EPSRC.

\section{Preliminaries} \label{Preliminaries}

In this section we recall a number of definitions and theorems that we will use later, in particular relating to derived categories, tilting bundles, and deformations.

\subsection{Geometric and notational preliminaries}
Throughout this paper all schemes will be over $\mathbb{C}$, and we will often work in the generality of schemes, $X$, arising from projective morphisms $\pi:X \rightarrow \Spec(R)$ of Noetherian schemes over $\mathbb{C}$. Such schemes are Noetherian and separated, and we recall that any morphism of Noetherian schemes is quasi-compact. When $\Spec(R)$ is also of finite type then $X$ is of finite type and quasi-projective over $\mathbb{C}$, and this category of projective over affine schemes is described in \cite[Section 1.6]{BHTilting}. We let $\Coh \, X$ and $\QCoh \, X$ denote the categories of coherent and quasicoherent sheaves respectively on the scheme $X$. For an affine scheme $\Spec(R)$ we will let $\mathcal{O}_R$ denote $\mathcal{O}_{\Spec(R)}$, and make frequent similar abuses whenever it simplifies notation and  the meaning is clear. Unadorned fibre products will also always be over $\Spec(\mathbb{C})$.

\subsection{Derived categories and tilting} \label{Derived Categories}  \label{Tilting}
We recall the definitions of tilting bundles on schemes and several results related to derived categories that we will make use of later.

Consider a triangulated $\mathbb{C}$-linear category $\mathfrak{C}$ with small direct sums. A  full subcategory is \emph{thick} if it is triangulated and closed under direct summands. A full subcategory is \emph{localising} if it is triangulated and also closed under all small direct sums, and a localising subcategory is necessarily closed under direct summands \cite[Proposition 1.6.8]{Neeman Triangulated Categories}.  An object $T \in \mathfrak{C}$ \emph{generates} if the smallest localising subcategory containing $T$ is $\mathfrak{C}$. 
\begin{TiltingDef} \label{TiltingDef}
Let $\mathfrak{C}$ be a triangulated category closed under small direct sums. An object $T$ in $\mathfrak{C}$ is \emph{tilting} if:
\begin{enumerate}
\item $ \Ext^k_{\mathfrak{C}}(T,T)=0 $ for $k \neq 0$.
\item $T$ generates $\mathfrak{C}$.
\item The functor $\Hom_{\mathfrak{C}}(T,-)$ commutes with small direct sums.
\end{enumerate}
\end{TiltingDef}

For $X$ a Noetherian scheme let $D(X)=D(\QCoh \, X)$ denote the unbounded derived category of quasicoherent sheaves on $X$, let $D^b(\Coh \, X)$ denote the bounded derived category of coherent sheaves, and let $D^{-}(\Coh\, X)$ denote the bounded above derived category of coherent sheaves. For $X$ a quasi-compact and separated scheme $D(X)$ is closed under small direct sums \cite[Example 1.3]{Neeman} and $D(X)$ is compactly generated with compact objects the perfect complexes \cite[Proposition 2.5]{Neeman} which we denote by $\Perf(X)$. When $X$ is smooth the category of perfect complexes equals $D^b(\Coh \, X)$. We also note that if a quasi-compact and separated scheme $X$ has an ample line bundle $\mathcal{L}$ then the smallest localising subcategory of $D( X)$ containing $\{ \mathcal{L}^{\otimes t} : t \in \mathbb{Z} \, \}$ is $D(X)$ itself \cite[Example 1.10]{Neeman}.

For an algebra $A$ we let $D(A)$ denote the derived category of right modules over $A$ and $D^b(A$-$\mod)$ denote the bounded derived category of finitely generated right $A$-modules. When $D(X)$ has tilting object a sheaf $T$ then we define $A:=\End_{X}(T)$. When $T$ is a locally free coherent sheaf on $X$ then $T$ is a \emph{tilting bundle} and gives a derived equivalence between $D( X)$ and $D(A)$.  

\begin{Tilting}[{\cite[Theorem 7.6]{HilleVdB}}] \label{BHTilting}
Let $\pi:X \rightarrow \Spec(R)$ be a projective morphism of finite type schemes over $\mathbb{C}$ with tilting bundle $T$ on $X$ and $A:=\End_{X}(T)$. Then:
\begin{enumerate}
\item The functor $T_*:= \mathbf{R}\Hom_{X}(T,-)$ is an equivalence between $D(X)$ and $D(A)$. An inverse equivalence is given by the left adjoint $T^*:=(-) \otimes^{\mathbf{L}}_{A} T$.
\item The functors $T_*,T^*$ remain equivalences when restricted to the bounded derived categories of coherent sheaves and finitely generated modules.
\item If $X$ is smooth then $A$ has finite global dimension.
\end{enumerate}

\end{Tilting}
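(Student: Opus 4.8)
The plan is to reconstruct the standard tilting equivalence argument, which proceeds via the adjoint pair $(T^*, T_*)$ together with a localising-subcategory bootstrap. The two functors $T^* = (-)\otimes^{\mathbf{L}}_A T$ and $T_* = \mathbf{R}\Hom_X(T,-)$ form an adjunction with $T^*$ left adjoint to $T_*$, so it suffices to prove that both the unit $\eta:\mathrm{id}_{D(A)} \to T_*T^*$ and the counit $\epsilon:T^*T_* \to \mathrm{id}_{D(X)}$ are isomorphisms, since a fully faithful left adjoint together with a fully faithful right adjoint yields an equivalence. First I would record the two crucial computations on generators: $T_*(T) = \mathbf{R}\Hom_X(T,T) \cong A$ concentrated in degree zero, using tilting condition (1) that $\Ext^k_X(T,T)=0$ for $k\neq 0$; and $T^*(A) = A\otimes^{\mathbf{L}}_A T \cong T$. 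These show that $\epsilon$ is an isomorphism on $T$ and that $\eta$ is an isomorphism on the free module $A$.

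To propagate these isomorphisms, I would invoke that the full subcategory of $D(X)$ on which $\epsilon$ is an isomorphism is triangulated and, because both $T_*$ and $T^*$ commute with small direct sums, closed under them; hence it is localising. As $T$ generates $D(X)$ by tilting condition (2), this subcategory is all of $D(X)$, so $\epsilon$ is an isomorphism and $T_*$ is fully faithful. The fact that $T_*$ commutes with direct sums is exactly tilting condition (3) (equivalently, that $T$ is compact), while $T^*$ commutes with direct sums automatically as a left adjoint and because $\otimes^{\mathbf{L}}$ does. Symmetrically, the subcategory of $D(A)$ on which $\eta$ is an isomorphism is localising and contains $A$, which generates $D(A)$; therefore $\eta$ is an isomorphism and $T^*$ is fully faithful. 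This establishes part (1), with $T^*$ the inverse equivalence.

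For part (2), the strategy is to show the equivalence restricts to the bounded coherent subcategories by checking each functor preserves them. For $T_*$, since $T$ is locally free one has $T_*(\mathcal F) \cong \mathbf{R}\pi_*\,\SHom(T,\mathcal F)$ with $\SHom(T,\mathcal F)=T^\vee\otimes\mathcal F$ coherent; as $\pi$ is projective over the Noetherian affine base, Grothendieck finiteness gives bounded, finitely generated cohomology, so $T_*$ sends $D^b(\Coh\,X)$ into $D^b(A\text{-}\mod)$. For $T^*$, using $T^*(A)\cong T$ and the triangulated structure, $T^*$ sends the thick subcategory generated by $A$ into $D^b(\Coh\,X)$; boundedness of the output for a general object of $D^b(A\text{-}\mod)$ follows from the finite cohomological dimension of $X$ controlling the amplitude of $T^*$. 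Since $T_*$ and $T^*$ are already mutually inverse on the unbounded categories, these containments upgrade to mutually inverse equivalences on the bounded categories.

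Part (3) is where I expect the main subtlety, and I would argue it via compact objects. An equivalence preserves compactness, and the compact objects of $D(X)$ and $D(A)$ are respectively $\Perf(X)$ and $\Perf(A)$, so $T_*$ restricts to an equivalence $\Perf(X)\cong\Perf(A)$. When $X$ is smooth (finite-dimensional and regular) every bounded coherent complex admits a finite locally free resolution, so $D^b(\Coh\,X)=\Perf(X)$. Combining this with part (2) gives $D^b(A\text{-}\mod) = T_*\big(D^b(\Coh\,X)\big) = T_*\big(\Perf(X)\big) = \Perf(A)$, so every finitely generated $A$-module is perfect. The remaining point, which is the genuine obstacle, is to convert ``$\Perf(A)=D^b(A\text{-}\mod)$'' into a \emph{uniform} bound on projective dimensions, i.e.\ $\gldim A<\infty$; I would obtain this by transporting Ext-vanishing through the equivalence, writing $\Ext^k_A(M,N)\cong\Hom_{D(X)}(T^*M, T^*N[k])$ and using that $T^*M, T^*N$ have amplitude bounded by a fixed constant $c$ together with Grothendieck vanishing $\Ext^k_X=0$ for $k>\dim X$, which forces $\Ext^k_A(M,N)=0$ for all $k$ beyond a bound independent of $M,N$.
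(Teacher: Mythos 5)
First, for calibration: the paper does not prove Theorem \ref{BHTilting} at all --- it is imported as a black box from \cite[Theorem 7.6]{HilleVdB} --- so your proposal can only be compared with the argument in that reference, not with anything in the paper. Your part (1) is the standard argument and is correct: the adjunction $(T^*,T_*)$, the identifications $T_*T\cong A$ and $T^*A\cong T$, and the localising-subcategory bootstrap, with closure under small direct sums supplied by tilting condition (3) for $T_*$ and by the left-adjoint property for $T^*$, and with generation supplied by condition (2) on the $X$ side and by the compact generator $A$ on the module side. The $T_*$ half of your part (2) is also fine; it is the same computation as the paper's Lemma \ref{Bounded Push Down}.

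The genuine gap is in the $T^*$ half of part (2), and it reappears in (3). The thick subcategory of $D(A)$ generated by $A$ is $\Perf(A)$, which is \emph{strictly} smaller than $D^b(A\text{-}\mod)$ whenever $A$ has infinite global dimension --- exactly what happens in the singular case, which the theorem still covers (for instance $X=\Spec(S)$ with $S$ singular and $T=\mathcal{O}_X$, so $A=S$). So the first clause of your argument only treats perfect complexes. The second clause, that ``the finite cohomological dimension of $X$ controls the amplitude of $T^*$'', attaches the bound to the wrong functor: cohomological dimension bounds the amplitude of $T_*=\mathbf{R}\Gamma(X,T^\vee\otimes -)$, giving $T_*(D^{[a,b]})\subset D^{[a,b+N]}$, whereas the amplitude of $T^*=(-)\otimes^{\mathbf{L}}_A T$ is the Tor-dimension of $T$ as a left $A$-module, about which the cohomological dimension of $X$ says nothing a priori. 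Finiteness of that Tor-dimension is essentially the content of part (2); it cannot be assumed, and it does not follow by formal $t$-structure manipulations from part (1) together with the finite amplitude of $T_*$. (Even for $X=\mathbb{P}^1$, $T=\mathcal{O}\oplus\mathcal{O}(1)$, the bundle $T$ is not flat over $A$: for one of the two simple $A$-modules $S$ one computes $T^*S\cong\mathcal{O}(-1)[1]$.) The standard repair is to characterise the bounded coherent subcategories intrinsically on both sides: an object $E$ lies in $D^b(\Coh X)$, respectively $D^b(A\text{-}\mod)$, if and only if $\bigoplus_n\Hom(P,E[n])$ is a finitely generated $R$-module for every compact object $P$ (a ``homological finiteness'' criterion in the style of Bondal--Van den Bergh and Rouquier); since the equivalence of (1) is $R$-linear (Remark \ref{TiltingRemark}) and preserves compacts, it preserves these subcategories. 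For (3), your reduction $D^b(A\text{-}\mod)=\Perf(A)$ is then fine, but the final uniformity step again assumes an unproved uniform bound $c$ on the amplitude of $T^*M$ over all finitely generated $M$ --- the same unjustified assertion. It can be avoided: write $T^*M\in D^b(\Coh X)$ as a finite extension of shifts $\mathcal{F}_j[j]$ of its cohomology sheaves, so that $M$ is a finite extension of the complexes $T_*\mathcal{F}_j[j]$; for any coherent sheaf $\mathcal{F}$ on smooth $X$ of dimension $n$ and any module $N$ one has $\Hom_{D(A)}(T_*\mathcal{F},N[i])\cong\Hom_{D(X)}(\mathcal{F},T^*N[i])=0$ for $i>2n$, using only that $\mathcal{F}$ is perfect with $\RCalHom_X(\mathcal{F},\mathcal{O}_X)$ of amplitude in $[0,n]$, that $\mathbf{R}\Gamma$ has amplitude in $[0,n]$, and the cost-free bound $T^*N\in D^{\leq 0}$ coming from right-exactness. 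This gives $\gldim A\leq 2\dim X$ without ever needing a lower amplitude bound for $T^*$.
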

\begin{TiltingRemark}[{\cite[Remark 1.9]{BHTilting}}] \label{TiltingRemark} In the situation of the above theorem $A$ is a finite $R$-algebra, $A$ is module finite over its centre, and $A$ is both left and right Noetherian. Also, if there is a morphism $X \rightarrow \Spec(B)$, then the equivalence $T_*$ is $B$-linear and $A$ is naturally a $B$-algebra.
\end{TiltingRemark}

The following result gives an alternative characterisation of a tilting bundle on a projective over affine Noetherian scheme.
\begin{TodaUehara}[{\cite[Lemma 3.3]{TodaUehara}}] \label{UeharaToda} Let $\pi:X \rightarrow \Spec(R)$ be a projective morphism of Noetherian schemes, and let $T$ be a locally free coherent sheaf on $X$. Suppose that $\Ext^i_{X}(T,T)=0$ for $i \neq 0$ and $\mathbf{R}\Hom_{X}(T,\E)=0$ implies $\E \cong 0$  for any $\E \in D^{-}(\Coh \, X)$. Then $\mathbf{R}\Hom_{X}(T,-)$ and $(-)\otimes_{A}^{\mathbf{L}}T$ give inverse equivalences between $D^b(\Coh \, X)$ and $D^b(A$-$\mod)$.
\end{TodaUehara}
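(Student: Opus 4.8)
The plan is to exhibit $F:=\mathbf{R}\Hom_X(T,-)\colon D(\QCoh\,X)\to D(A)$ together with its left adjoint $G:=(-)\otimes_A^{\mathbf{L}}T\colon D(A)\to D(\QCoh\,X)$ as mutually inverse on the unbounded level, and only then to cut down to the bounded coherent subcategories. Write $\eta\colon\mathrm{id}\to FG$ and $\epsilon\colon GF\to\mathrm{id}$ for the unit and counit of the adjunction $G\dashv F$. Everything is bootstrapped from two computations: $F(T)\cong A$, immediate from $\Ext^i_X(T,T)=0$ for $i\neq 0$ together with $\Ext^0_X(T,T)=A$, and $G(A)=A\otimes_A^{\mathbf{L}}T\cong T$. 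In particular $\eta_A$ and $\epsilon_T$ are isomorphisms.

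First I would prove that the unit $\eta_M$ is an isomorphism for every $M$. Since $T$ is a bundle it is a perfect complex, hence a compact object of $D(\QCoh\,X)$ (here using that $X$ is quasi-compact and separated, so $D(\QCoh\,X)$ is compactly generated by the perfect complexes, as recalled above), and therefore $F$ commutes with small direct sums; $G$ does so automatically. Consequently the full subcategory of those $M\in D(A)$ for which $\eta_M$ is invertible is localising. As it contains the free module $A$, and $A$ is a compact generator of $D(A)$, it is all of $D(A)$. Thus $G$ is fully faithful and $\eta$ is invertible on $D(A)$, in particular on $D^{-}(A\text{-}\mod)$.

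The heart of the argument, and the only place the detection hypothesis enters, is the counit. Fix $\E\in D^{-}(\Coh\,X)$ and complete $\epsilon_\E\colon GF(\E)\to\E$ to a triangle $GF(\E)\to\E\to C\to$. Applying $F$ and using the triangle identity $F(\epsilon_\E)\circ\eta_{F(\E)}=\mathrm{id}_{F(\E)}$ together with the invertibility of $\eta_{F(\E)}$ just established, I find that $F(\epsilon_\E)$ is an isomorphism, so $F(C)=0$. Since $F(\E)\in D^{-}(A\text{-}\mod)$ and $G$ preserves bounded-above complexes, $GF(\E)$ and hence $C$ lie in $D^{-}(\Coh\,X)$; the hypothesis that $\mathbf{R}\Hom_X(T,-)$ detects the zero object on $D^{-}(\Coh\,X)$ then forces $C=0$. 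Therefore $\epsilon_\E$ is an isomorphism for all $\E\in D^{-}(\Coh\,X)$, and $F$ and $G$ are inverse equivalences between $D^{-}(\Coh\,X)$ and $D^{-}(A\text{-}\mod)$.

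It remains to restrict this equivalence to the bounded coherent subcategories, which is where I expect the real work to lie. That $F$ carries $D^b(\Coh\,X)$ into $D^b(A\text{-}\mod)$ is straightforward: $\mathbf{R}\Hom_X(T,\E)\cong\mathbf{R}\Gamma\bigl(T^\vee\otimes\E\bigr)$ and $\mathbf{R}\pi_*$ has finite cohomological dimension for the projective morphism $\pi$, so $F$ has finite cohomological amplitude. The opposite inclusion $G\bigl(D^b(A\text{-}\mod)\bigr)\subseteq D^b(\Coh\,X)$ is the genuine obstacle: it amounts to $T$ having finite Tor-dimension as an $A$-module, and this does \emph{not} follow formally from $F$ being an equivalence on $D^{-}$, since an equivalence of bounded-above categories need not reflect boundedness. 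This is the geometric analogue of the classical fact that a tilting module has finite projective dimension over its endomorphism ring, and I would establish it by a local computation: cover $X$ by finitely many affines on which $T$ is free, bound the flat dimension contributed by the fibres of $\pi$ using the finite cohomological dimension of $\mathbf{R}\pi_*$, and combine this with $A$ being module-finite over the Noetherian base $R$ to produce a uniform bound on $\Tor^A_i(M,T)$. Granting this, $F$ and $G$ restrict to the asserted inverse equivalences $D^b(\Coh\,X)\simeq D^b(A\text{-}\mod)$.
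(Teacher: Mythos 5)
Your construction of the equivalence on the bounded-above level is correct, and it is essentially the argument of the cited source (the paper itself offers no proof of this lemma; it quotes it from \cite[Lemma 3.3]{TodaUehara}, so that is the relevant comparison): the unit is handled by compact generation of $D(A)$ by $A$, the counit by applying $F$ to the cone and invoking the detection hypothesis. The gap is your final paragraph. You are right that everything reduces to $G(D^b(A\text{-}\mod))\subseteq D^b(\Coh\,X)$, i.e.\ to vanishing of $\Tor_i^A(M,T)$ for $i\gg 0$, and right that this is not formal; but the proposed ``local computation'' is not an argument. A resolution computing $\Tor^A_i(M,T)$ lives over the noncommutative ring $A$, which does not localise over $X$: restricting to an affine $U$ on which $T|_U$ is free controls $T(U)$ only as an $\mathcal{O}_X(U)$-module and says nothing about its homological behaviour over $A$, and neither the cohomological dimension of $\mathbf{R}\pi_*$ nor module-finiteness of $A$ over $R$ enters a resolution over $A$ in any visible way. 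Note also that your sketch never uses the detection hypothesis, whereas the classical fact you appeal to (a tilting module has finite projective dimension over its endomorphism ring) is itself proved using the generation axiom of tilting modules, not just $\Ext$-self-orthogonality. That something beyond $\Ext$-vanishing and finiteness properties must enter here can be seen on $X=\mathbb{P}^1$ with $T=\mathcal{O}^{\oplus 2}$: all the ingredients you list are present, yet $F$ fails to reflect boundedness, since $\E=\bigoplus_{k\geq 0}\mathcal{O}(-1)[k]\in D^{-}(\Coh\,X)$ is unbounded while $\mathbf{R}\Hom_X(T,\E)=0$. Given your $D^{-}$ equivalence, reflection of boundedness by $F$ is exactly equivalent to the inclusion you want, so this is precisely the statement your last step must deliver, and it provably fails in the absence of detection.

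Here is how the step is actually closed, using the $D^{-}$ equivalence you have already established together with projectivity of $\pi$. Fix an ample $\mathcal{O}(1)$ on $X$ and $N$ such that $H^i(X,-)$ vanishes on quasicoherent sheaves for $i>N$. Let $M\in D^b(A\text{-}\mod)$ with $M\in D^{\geq m}$, and set $\E=G(M)$, so $F(\E)\cong M$. For every $k\geq 0$ the object $F(\mathcal{O}(-k))=\mathbf{R}\Hom_X(T,\mathcal{O}(-k))$ lies in $D^{[0,N]}(A\text{-}\mod)$, uniformly in $k$; hence by full faithfulness of $F$ on $D^{-}(\Coh\,X)$ and the $t$-structure vanishing $\Hom_{D(A)}(D^{\leq N},D^{\geq N+1})=0$ we get $H^j(X,\E(k))=\Hom_{D(X)}(\mathcal{O}(-k),\E[j])\cong\Hom_{D(A)}(F(\mathcal{O}(-k)),M[j])=0$ for all $k\geq 0$ and all $j<m-N$. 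If $\mathcal{H}^{j}(\E)\neq 0$ for some $j<m-N$, choose $j_1$ maximal with this property; for $k\gg 0$ Serre vanishing kills $H^p(X,\mathcal{H}^q(\E)(k))$ for all $p\geq 1$ and $j_1-N\leq q\leq j_1$, so the hypercohomology spectral sequence degenerates in total degree $j_1$ and yields $H^{j_1}(X,\E(k))\cong\Gamma(X,\mathcal{H}^{j_1}(\E)(k))\neq 0$ by ampleness, a contradiction. Hence $\E\in D^{\geq m-N}$ and $G$ preserves boundedness. Substituting this for your final paragraph completes the proof; observe that the finite Tor-dimension of $T$ over $A$ then comes out as a consequence of the detection hypothesis, which is how all known proofs obtain it, rather than as an input established independently of it.
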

\begin{RemarkTU} \label{RemarkTU}
Such an equivalence between $D^b(\Coh \, X)$ and $D^b(A$-$\mod)$ produces an equivalence between compact objects, so the smallest thick subcategory containing $T$ is $\Perf(X)$. Hence $T$ automatically satisfies Definition \ref{TiltingDef} (2) and (3) as $D(X)$ is compactly generated by $\Perf(X)$ and compact objects commute with small direct sums. Hence a locally free coherent sheaf $T$  is a tilting bundle on $X$ if and only if both $\Ext_{X}^i(T,T)=0$ for $i \neq 0$ and $\mathbf{R}\Hom_X(T,\E)=0$ implies $\E \cong 0$ for any $\E \in D^{-}(\Coh \, X)$.
\end{RemarkTU}

\subsection{Derived base change} In this section we introduce notation and recall several base change results that we will require later.

\begin{DualPullback} \label{DualPullback}
Let $f:X \rightarrow Y$ be a separated morphism of Noetherian schemes over $\mathbb{C}$ and $T \in \Perf(Y)$, then 
\begin{equation*}
\mathbf{L}f^* \RCalHom_{Y}(T,\E) \cong \RCalHom_{X}(\mathbf{L}f^* T,\mathbf{L}f^* \E)
\end{equation*} 
for any $\E \in D(Y)$.
\end{DualPullback}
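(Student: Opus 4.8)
The plan is to reduce the statement to the fact that a perfect complex is a \emph{dualizable} object of the derived category and that $\mathbf{L}f^*$ is a symmetric monoidal functor which preserves duals. First I would record the natural comparison map: for any $T,\E \in D(Y)$ there is a morphism $\eta_T \colon \mathbf{L}f^* \RCalHom_{Y}(T,\E) \to \RCalHom_{X}(\mathbf{L}f^* T, \mathbf{L}f^* \E)$, obtained from the evaluation map $\RCalHom_{Y}(T,\E) \otimes^{\mathbf{L}} T \to \E$ by applying $\mathbf{L}f^*$, invoking the monoidality isomorphism $\mathbf{L}f^*(\mathcal{F} \otimes^{\mathbf{L}} \mathcal{G}) \cong \mathbf{L}f^*\mathcal{F} \otimes^{\mathbf{L}} \mathbf{L}f^*\mathcal{G}$, and then using the $(\otimes^{\mathbf{L}}, \RCalHom)$ adjunction on $X$. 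The entire content of the lemma is then the assertion that $\eta_T$ is an isomorphism whenever $T$ is perfect.

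For the conceptual argument, fix $\E$ and write $T^\vee := \RCalHom_{Y}(T,\mathcal{O}_Y)$. When $T$ is perfect it is dualizable, so there is a natural isomorphism $\RCalHom_{Y}(T,\E) \cong T^\vee \otimes^{\mathbf{L}} \E$ on $Y$, and likewise $\RCalHom_{X}(\mathbf{L}f^* T, \mathbf{L}f^*\E) \cong (\mathbf{L}f^* T)^\vee \otimes^{\mathbf{L}} \mathbf{L}f^*\E$ on $X$, the latter because $\mathbf{L}f^* T$ is again perfect. Applying $\mathbf{L}f^*$ to the first isomorphism and using monoidality gives $\mathbf{L}f^* \RCalHom_{Y}(T,\E) \cong \mathbf{L}f^* T^\vee \otimes^{\mathbf{L}} \mathbf{L}f^*\E$, so it only remains to identify $\mathbf{L}f^* T^\vee \cong (\mathbf{L}f^* T)^\vee$ --- again the statement that $\mathbf{L}f^*$ commutes with dualization of perfect complexes --- and to check that these identifications are compatible with $\eta_T$.

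To establish the three ingredients --- the duality formula $\RCalHom_{Y}(T,-) \cong T^\vee \otimes^{\mathbf{L}} (-)$, the preservation of perfection under $\mathbf{L}f^*$, and the compatibility $\mathbf{L}f^* T^\vee \cong (\mathbf{L}f^* T)^\vee$ --- I would argue by dévissage. For fixed $\E$, both the source and the target of $\eta_T$ are contravariant triangulated functors of $T$ and $\eta$ is a natural transformation between them, so the full subcategory of $\Perf(Y)$ on which $\eta_T$ is an isomorphism is thick, by the two-out-of-three property in a distinguished triangle together with closure under direct summands. Since $\mathbf{L}f^*$ and $\RCalHom$ both commute with restriction to open subschemes, the claim is local on $Y$, so I may assume $Y$ affine; then every perfect complex is quasi-isomorphic to a bounded complex of finite rank free $\mathcal{O}_Y$-modules, hence lies in the thick subcategory generated by $\mathcal{O}_Y$. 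Finally $\eta_{\mathcal{O}_Y}$ is an isomorphism, since $\RCalHom_{Y}(\mathcal{O}_Y,\E) \cong \E$, $\mathbf{L}f^* \mathcal{O}_Y \cong \mathcal{O}_X$, and $\RCalHom_{X}(\mathcal{O}_X, \mathbf{L}f^*\E) \cong \mathbf{L}f^*\E$, with $\eta_{\mathcal{O}_Y}$ the identity under these identifications; this places $\mathcal{O}_Y$ in the thick subcategory, which therefore exhausts $\Perf(Y)$.

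The main obstacle I anticipate is bookkeeping rather than any deep point: one must construct $\eta_T$ precisely enough to verify that it really is the identity at $T = \mathcal{O}_Y$ and that it is compatible with restriction to opens, so that the local reduction is legitimate. Some care is also needed because $\E$ is an arbitrary, possibly unbounded, object of $D(Y) = D(\QCoh\,Y)$, so I would confirm that $\mathbf{L}f^*$, $\RCalHom$, the evaluation map, and the monoidality isomorphism are all the correctly derived, unbounded versions; once $T$ has been reduced to a bounded complex of finite free modules these subtleties evaporate.
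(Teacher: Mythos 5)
Your argument is correct, but it is genuinely different from the paper's. The paper proves Lemma \ref{DualPullback} by Yoneda: it shows that $\mathbf{L}f^*\RCalHom_Y(T,\E)$ and $\RCalHom_X(\mathbf{L}f^*T,\mathbf{L}f^*\E)$ corepresent the same functor on $D(X)$, running a chain of natural isomorphisms through the $(\mathbf{L}f^*,\mathbf{R}f_*)$ adjunction, the dualizability of the perfect complexes $T$ and $\mathbf{L}f^*T$, and --- crucially --- the projection formula $\mathbf{R}f_*(\mathbf{L}f^*T\otimes^{\mathbf{L}}_X -)\cong T\otimes^{\mathbf{L}}_Y\mathbf{R}f_*(-)$. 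You instead construct the canonical comparison map $\eta_T$ directly and run a d\'evissage: the locus of $T\in\Perf(Y)$ where $\eta_T$ is an isomorphism is thick, the question is local on $Y$ (via compatibility of $\eta$ with restriction to $f^{-1}(V)\to V$), and over an affine every perfect complex lies in the thick subcategory generated by $\mathcal{O}_Y$, where $\eta_{\mathcal{O}_Y}$ is visibly an isomorphism. What each buys: your route never mentions $\mathbf{R}f_*$ at all, so it avoids both the projection formula and any adjunction bookkeeping for unbounded complexes, at the cost of the naturality/restriction checks you flag and the local reduction; the paper's route is global (no affine reduction, no thick-subcategory argument) and shorter on the page, but it quietly leans on the projection formula in the unbounded setting, which is itself usually proved by exactly the kind of d\'evissage you carry out. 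One small correction: over a Noetherian affine scheme a perfect complex is quasi-isomorphic to a bounded complex of finitely generated \emph{projective} modules, not necessarily free ones; this is harmless for you, since finite projectives are summands of finite frees and your subcategory is closed under summands, but the statement as you wrote it is slightly too strong.
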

\begin{proof}

Consider the two functors 
\begin{align*}
\Hom_{D(X)}(\mathbf{L}f^* \RCalHom_{Y}(T,\E),-)&: D(X) \rightarrow \mathfrak{Sets}, \text{ and} \\
\Hom_{D(X)}( \RCalHom_{X}(\mathbf{L}f^*T,\mathbf{L}f^*\E),-)&: D(X) \rightarrow \mathfrak{Sets}.
\end{align*}
Showing these are naturally isomorphic proves the statement by Yoneda's lemma. This follows from the chain of natural isomorphisms
\begin{align*}
\Hom_{D(X)}(\mathbf{L}f^*\RCalHom_{Y}(T,\E), -)  & \cong \Hom_{D(Y)}(\RCalHom_{Y}(T,\E) , \mathbf{R} f_*(-)) \tag{adjunction}\\
&\cong \Hom_{D(Y)}(\E,  T \otimes^{\mathbf{L}}_{Y} \mathbf{R}f_*(-)) \tag{$T$ perfect}\\
& \cong  \Hom_{D(Y)}(\E,  \mathbf{R}f_* ( \mathbf{L}f^* T \otimes^{\mathbf{L}}_{X} (-))) \tag{projection} \\
& \cong  \Hom_{D(X)}(\mathbf{L}f^* \E,  \mathbf{L}f^* T \otimes^{\mathbf{L}}_{X} (-))  \tag{adjunction}\\
& \cong \Hom_{D(X)} (\RCalHom_{X}(\mathbf{L}f^*T,\mathbf{L}f^* \E),-).  \tag{$\mathbf{L}f^*T$ perfect}
\end{align*}
\end{proof}

We recall the following result.

\begin{Bounded Push Down} \label{Bounded Push Down} If $\pi:X \rightarrow \Spec(R)$ is a projective morphism of Noetherian schemes then $\mathbf{R}\pi_* \E \in D^b(R$-$\mod)$ for any $\E \in D^b(\Coh \,X)$. 
\end{Bounded Push Down}

\begin{proof} As $\E$ has bounded cohomology it follows that $\mathcal{H}^i(\E) = 0$ for $|i| \gg 0$ and we consider the hypercohomology spectral sequence (see \cite[Proposition 2.66]{HuybrechtsFM})
\[
E_{p,q}^2 := \mathbf{R}^p \pi_* \mathcal{H}^q(\E) \Rightarrow \mathbf{R}^{p+q} \pi_*\E.
\]
For any $q$, as $\pi$ is projective and $\mathcal{H}^q(\E)$ is a coherent sheaf it follows that $\mathbf{R}^p \pi_* \mathcal{H}^q(\E)$  is a coherent sheaf by \cite[Theorem 8.8b)]{HartshorneAG} and $\mathbf{R}^p \pi_* \mathcal{H}^q(\E)$ vanishes for $p \gg 0$ by \cite[Theorem 2.7]{HartshorneAG}. We deduce that $\mathcal{H}^{p+q}(\mathbf{R}\pi_*\E) \cong \mathbf{R}^{p+q} \pi_*\E$ is coherent for all $p,q$ and vanishes for $|p+q| \gg 0$ and hence $\mathbf{R}\pi_*\E \in D^b(R$-$\mod)$.
\end{proof}

The following lemma is due to Bridgeland, and the proof is reproduced below as we will later prove an analogue of this result in the $\mathbb{C}^*$-equivariant setting. Also note that the statement of the lemma below differs slightly from that in \cite{BridgelandEquivalences} by applying to the bounded above derived category of coherent sheaves rather than the bounded derived category of coherent sheaves, and to Noetherian schemes rather than schemes of finite type over $\mathbb{C}$. However, the same proof holds in this generality.

\begin{VitalLemma1}[{\cite[Lemma 4.3]{BridgelandEquivalences}}] \label{VitalLemma1}
Let $f:X \rightarrow Y$ be a morphism of separated Noetherian schemes over $\mathbb{C}$, and for each closed point $y \in Y$ let $j_y$ denote the inclusion of the fibre $f^{-1}(y)$. Suppose that $\mathcal{E} \in D^{-}(\Coh \, X)$ and $\mathbf{L}j_y^*\mathcal{E}$ is a sheaf for all closed points $y$. Then $\mathcal{E}$ is a coherent sheaf on $X$ flat over $Y$.
\end{VitalLemma1}
\begin{proof} 
For a closed point $y \in Y$ we consider the hypercohomology spectral sequence 
\begin{equation*}
E_2^{p,q}=\mathbf{L}_{-p}j_y^*(\mathcal{H}^{q}(\mathcal{E})) \Rightarrow \mathbf{L}_{-(p+q)}j_y^* \mathcal{E},
\end{equation*}
which is the left derived version of the sequence in \cite[Proposition 2.66]{HuybrechtsFM}.

As we assume that $\mathbf{L}j^*_y \mathcal{E}$ is a sheaf the right hand side is zero unless $p+q=0$.
As $\mathcal{E} \in D^{-}(X)$ there exists a maximal $q$ such that $\mathcal{H}^{q}(\mathcal{E})$ is nonzero, $q_0$ say. Then $\mathcal{H}^{q_0}(\mathcal{E})$ is a coherent sheaf on $X$, so is supported at some point $x$ with $f(x)=y$, hence by Nakayama's lemma $j^*_y \mathcal{H}^{q_0}(\mathcal{E}) \neq 0$ and so $E_2^{0,q_0}$ survives in the spectral sequence for this $y$. Hence $q_0=0$. 

We now consider $\mathcal{H}^0(\mathcal{E})$, and we note that this must be flat over $Y$ to prevent $E_2^{-1,0}$ surviving in the spectral sequence for some $y$.

Finally, if $\mathcal{H}^{q}(\mathcal{E} )\neq 0$ for some $q<0$ we can choose a maximal such value, $q_1$, and then $j^*_y \mathcal{H}^{q_1}(\mathcal{E})$ must be nonzero for some $y$ by Nakayama's lemma, and hence $E_2^{0,q_1}$ survives in the spectral sequence. This cannot occur, hence $\mathcal{H}^i(\mathcal{E})=0$ for $i<0$.
\end{proof}

We recall the following base change results. Let $f:X \rightarrow \Spec(D)$ and $u:\Spec(C) \rightarrow \Spec(D)$ be separated morphisms of Noetherian schemes. We consider the following base change diagram
\begin{align*}
\begin{tikzpicture} [bend angle=50]
\node (C1) at (0,0)  {$\Spec(C)$};
\node (C2) at (2,0)  {$\Spec(D)$};
\node (C3) at (0,1.6)  {$Y$};
\node (C4) at (2,1.6)  {$X$};
\draw [->] (C1) to node[above]  {\scriptsize{$u$}} (C2);
\draw [->] (C4) to node[right]  {\scriptsize{$f$}} (C2);
\draw [->] (C3) to node[left]  {\scriptsize{$g$}} (C1);
\draw [->] (C3) to node[above]  {\scriptsize{$v$}} (C4);
\end{tikzpicture}
\end{align*}
with $Y=X \times_{\Spec(D)} \Spec(C)$.

\begin{FlatBaseChange}[Flat base change {\cite[Lemma 29.5.2 (Tag 02KH)]{StacksProject}}] \label{Flat Base Change}
Suppose that $u:\Spec(C) \rightarrow \Spec(D)$ is a flat morphism of schemes. Then $u^* \mathbf{R}^i f_{*} \E \cong \mathbf{R}^if_{*}v^*\E$ for any $\E$ in $\QCoh \, X$.
\end{FlatBaseChange}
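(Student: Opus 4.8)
Since this lemma is quoted from the Stacks Project, the plan is to give the standard direct argument behind it. Because $\Spec(C)$ and $\Spec(D)$ are already affine, the sheaf $\mathbf{R}^i f_* \E$ is the quasicoherent sheaf on $\Spec(D)$ associated to the cohomology $D$-module $H^i(X,\E)$, and pulling back along the affine flat map $u$ amounts to applying $-\otimes_D C$. Writing $g:Y\to\Spec(C)$ for the projection, so that the right-hand side is read as $\mathbf{R}^i g_* v^*\E$, the claim reduces to producing a $C$-module isomorphism $H^i(X,\E)\otimes_D C \cong H^i(Y,v^*\E)$ that is natural in $\E$.

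First I would compute both cohomology groups by \v{C}ech cohomology. As $X$ is Noetherian it is quasi-compact, so it admits a finite affine open cover $\{U_\alpha\}$; since $X$ is separated all finite intersections $U_{\alpha_0\cdots\alpha_p}$ are again affine, and \v{C}ech cohomology of a quasicoherent sheaf with respect to such a cover computes its sheaf cohomology by \cite[Theorem III.4.5]{HartshorneAG}. The pulled-back cover $\{v^{-1}(U_\alpha)\}$ is an affine open cover of $Y$, because each $v^{-1}(U_\alpha)=U_\alpha\times_{\Spec D}\Spec C$ is a fibre product of affine schemes and hence affine, and likewise for all intersections; so $H^i(Y,v^*\E)$ is also computed by the corresponding \v{C}ech complex.

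The key step is to identify the two \v{C}ech complexes. Writing $U_\alpha=\Spec A_\alpha$ and $\E|_{U_\alpha}=\widetilde{M_\alpha}$, we have $v^{-1}(U_\alpha)=\Spec(A_\alpha\otimes_D C)$ and $v^*\E|_{v^{-1}(U_\alpha)}=\widetilde{M_\alpha\otimes_D C}$, and similarly on all intersections. Hence the \v{C}ech complex of $v^*\E$ is canonically $\check{C}^\bullet(\{U_\alpha\},\E)\otimes_D C$. Now I would invoke flatness: since $C$ is flat over $D$ the functor $-\otimes_D C$ is exact and so commutes with taking cohomology of a complex, giving $H^i(\check{C}^\bullet(\E)\otimes_D C)\cong H^i(\check{C}^\bullet(\E))\otimes_D C$. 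Combining the identifications yields $H^i(Y,v^*\E)\cong H^i(X,\E)\otimes_D C$, and one checks that this isomorphism is the canonical base change morphism.

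The step I expect to require the most care is the reduction to \v{C}ech cohomology: one must ensure the chosen cover satisfies the hypotheses under which \v{C}ech cohomology agrees with derived functor cohomology, which is exactly where separatedness of $X$ (guaranteeing affine intersections) is essential, and one must check that the local description of $v^*$ is compatible across the cover so that the two \v{C}ech complexes really are related by a single application of $-\otimes_D C$. Once that bookkeeping is in place, the genuine mathematical content is simply the exactness of $-\otimes_D C$ afforded by flatness, with everything else formal. An alternative route would be to establish the derived flat base change isomorphism $\mathbf{L}u^*\mathbf{R}f_*\E\cong\mathbf{R}g_*\mathbf{L}v^*\E$ and pass to cohomology sheaves using exactness of $u^*$, but this merely relocates the same \v{C}ech-theoretic computation.
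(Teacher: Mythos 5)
Your proof is correct. The paper gives no argument of its own for this lemma---it is quoted verbatim from the Stacks Project \cite{StacksProject}---and your \v{C}ech-complex computation (affine base, finite affine cover with affine intersections by separatedness, identification of the \v{C}ech complex of $v^*\E$ with $\check{C}^\bullet(\{U_\alpha\},\E)\otimes_D C$, then exactness of $-\otimes_D C$) is exactly the standard proof behind that citation, including your correct reading of the right-hand side as $\mathbf{R}^i g_* v^*\E$ for the projection $g:Y\rightarrow \Spec(C)$, where the printed statement has a typo writing $f_*$. The one point to tighten is the appeal to \cite[Theorem III.4.5]{HartshorneAG} on $Y$: that theorem assumes a Noetherian separated scheme, and a fibre product of Noetherian schemes need not be Noetherian, so you should either invoke the \v{C}ech comparison in the generality of quasi-compact separated schemes, or observe that in the paper's actual uses of the lemma (base change along the completion map $\Spec(\widehat{R})\rightarrow\Spec(R)$ with $X$ of finite type over $R$) the fibre product is again Noetherian.
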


\begin{DerivedBaseChange} \label{Derived Base Change} 
Suppose that $f:X \rightarrow \Spec(D)$ is a flat morphism of schemes. Then $\mathbf{L}u^* \mathbf{R}f_{*} \E \cong \mathbf{R}f_{*} \mathbf{L}v^*\E$ for any $\E \in D(X)$.
\end{DerivedBaseChange}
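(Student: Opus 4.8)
The plan is to construct the canonical base change morphism and then verify it is an isomorphism by transporting the whole comparison onto the affine base $\Spec(D)$, where it collapses to the statement that $\mathbf{R}f_*$ commutes with $-\otimes^{\mathbf{L}}_D C$. First I would produce the base change map
\[
\mathbf{L}u^*\mathbf{R}f_* \E \longrightarrow \mathbf{R}g_*\mathbf{L}v^*\E
\]
in the usual way: apply $\mathbf{R}f_*$ to the unit $\E \to \mathbf{R}v_*\mathbf{L}v^*\E$, rewrite $\mathbf{R}f_*\mathbf{R}v_* = \mathbf{R}u_*\mathbf{R}g_*$ using $fv = ug$, and then adjoin across $\mathbf{L}u^* \dashv \mathbf{R}u_*$. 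Since $g$ (and hence the target) lives over $\Spec(C)$ while $f$ lives over $\Spec(D)$, the right hand object is the pushforward along $g:Y\to\Spec(C)$, so the claimed isomorphism is $\mathbf{L}u^*\mathbf{R}f_*\E \cong \mathbf{R}g_*\mathbf{L}v^*\E$.

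The key geometric input is the flatness of $f$: it guarantees that $\mathcal{O}_X$ is flat over $D$, so that the structure sheaf of the fibre product is an honest (underived) tensor product, $\mathcal{O}_Y \cong \mathcal{O}_X \otimes_D C \cong \mathcal{O}_X \otimes^{\mathbf{L}}_D C$. Consequently $\mathbf{L}v^*\E \cong \E \otimes^{\mathbf{L}}_{\mathcal{O}_X}(\mathcal{O}_X\otimes^{\mathbf{L}}_D C) \cong \E\otimes^{\mathbf{L}}_D C$. Because $u$ is affine, the restriction-of-scalars functor $\mathbf{R}u_* = u_*$ is exact and faithful, hence conservative, so it suffices to check the base change map is an isomorphism after applying $u_*$. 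Using $fv = ug$ together with the projection formula for the affine morphism $v$ (and $v_*\mathcal{O}_Y \cong \mathcal{O}_X\otimes_D C$), one computes $u_*\mathbf{R}g_*\mathbf{L}v^*\E \cong \mathbf{R}f_*(\E\otimes^{\mathbf{L}}_D C)$, whereas $u_*\mathbf{L}u^*\mathbf{R}f_*\E \cong \mathbf{R}f_*\E\otimes^{\mathbf{L}}_D C$. The statement therefore reduces to the projection-type identity $\mathbf{R}f_*(\E)\otimes^{\mathbf{L}}_D C \cong \mathbf{R}f_*(\E\otimes^{\mathbf{L}}_D C)$.

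To prove this last isomorphism I would choose a resolution $P^\bullet \to C$ of $C$ by free $D$-modules and write $-\otimes^{\mathbf{L}}_D C = \Tot(-\otimes_D P^\bullet)$. For each free term $P^n$ the underived flat base change of Lemma \ref{Flat Base Change} gives $\mathbf{R}f_*(\E\otimes_D P^n)\cong \mathbf{R}f_*(\E)\otimes_D P^n$; totalising over $\bullet$ and commuting $\mathbf{R}f_*$ past $\Tot$ then yields the claim. The main obstacle is precisely this commutation for an unbounded $\E$ together with a possibly unbounded resolution $P^\bullet$ (the latter occurs whenever $C$ has infinite flat dimension over $D$). I would handle it by observing that both $\mathbf{L}u^*\mathbf{R}f_*$ and $\mathbf{R}g_*\mathbf{L}v^*$ are exact functors commuting with small coproducts — the pullbacks because they are left adjoints, and $\mathbf{R}f_*,\mathbf{R}g_*$ because $f,g$ are quasi-compact and separated — so that the full subcategory of $D(X)$ on which the base change map is an isomorphism is localising. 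It then suffices to verify the map on the compact generators of $D(X)$, i.e. on perfect complexes, where $\E$ is bounded; since $X$ is Noetherian, $\mathbf{R}f_*$ has finite cohomological amplitude, the relevant totalisations converge, and the termwise flat base change above completes the argument.
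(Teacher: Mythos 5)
Your proof is correct, but it takes a genuinely different route from the paper. The paper's proof is two sentences: flatness of $f$ gives $\Tor_i^{\mathcal{O}_{D,d}}(\mathcal{O}_{C,c},\mathcal{O}_{X,x})=0$ for $i\neq 0$, i.e.\ $X$ and $\Spec(C)$ are Tor-independent over $\Spec(D)$, and then it cites the Stacks Project lemma on derived base change for Tor-independent squares (Tag 08ET). You instead give a self-contained argument: construct the canonical comparison map, use that the affine pushforward $u_*$ is exact and conservative to transport everything to $\Spec(D)$, identify both sides there (via the projection formula for the affine morphism $v$ and the identity $v_*\mathcal{O}_Y = f^*\widetilde{C} = \mathbf{L}f^*\widetilde{C}$, which is exactly where flatness of $f$ enters --- the same pivot as the paper's Tor-independence), and then prove the resulting projection-formula identity $\mathbf{R}f_*(\E)\otimes^{\mathbf{L}}_D C \cong \mathbf{R}f_*(\E\otimes^{\mathbf{L}}_D C)$ by Neeman's theorems: $\mathbf{R}f_*$ preserves coproducts for quasi-compact separated $f$, the isomorphism locus of the natural map is a localising subcategory, and $D(\QCoh\,X)$ is compactly generated by perfect complexes, so one checks on bounded $\E$. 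What the paper's approach buys is brevity and extra generality (Tor-independence is weaker than flatness of $f$; the same citation covers the case where $u$ rather than $f$ is flat). What yours buys is self-containedness modulo Neeman's results (which the paper already quotes elsewhere), a transparent location for the flatness hypothesis, and, as a bonus, it silently corrects the typo in the statement: the right-hand side should be $\mathbf{R}g_*\mathbf{L}v^*\E$, the pushforward along $g:Y\to\Spec(C)$, not $\mathbf{R}f_*$. Two presentational quibbles that do not affect correctness: the termwise identity $\mathbf{R}f_*(\E\otimes_D P^n)\cong \mathbf{R}f_*(\E)\otimes_D P^n$ for free $P^n$ is not an instance of the flat base change Lemma but simply compatibility of $\mathbf{R}f_*$ with (possibly infinite) direct sums --- the very fact you invoke later; and once you have reduced to perfect $\E$, the appeal to finite cohomological amplitude is unnecessary, since writing the bounded-above free resolution $P^\bullet$ as a homotopy colimit of its stupid truncations and using coproduct-preservation already closes the argument.
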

\begin{proof}
As $X$ is flat over $\Spec(D)$ for any $x \in X$ and any $c \in \Spec(C)$ such that $f(x)=u(c)=d$  we have that $\Tor_i^{\mathcal{O}_{D,d}}(\mathcal{O}_{C,c},\mathcal{O}_{X,x})=0$ for all $i\neq 0$. Hence $X$ and $\Spec(C)$ are Tor independent over $\Spec(D)$, and so the result follows from \cite[Lemma 35.16.3(Tag 08ET)]{StacksProject}.
\end{proof}

The notation below is needed in order to consider base change by closed points of $\Spec(D)$. Let $\rho:X \rightarrow \Spec(D)$ be a flat morphism of Noetherian schemes such that $\rho$ factors as a projective morphism $\pi:X \rightarrow \Spec(R)$ followed by a flat affine morphism $\alpha:\Spec(R) \rightarrow \Spec(D)$.  Suppose that $u:\Spec(D/d) \rightarrow \Spec(D)$ is the inclusion of a closed point $d$ of $\Spec(D)$, and consider the pullback diagram
\[
\begin{array}{c}
 \tag{$BC_d$}\label{ClosedBaseChange}
\begin{tikzpicture} [bend angle=50]
\node (C1) at (0,0)  {$\Spec(D/d)$};
\node (C2) at (2.4,0)  {$\Spec(D)$};
\node (C3) at (0,1.6)  {$\Spec(R_d)$};
\node (C4) at (2.4,1.6)  {$\Spec(R)$};
\node (C5) at (0,3.2)  {$X_d$};
\node (C6) at (2.4,3.2)  {$X$};
\draw [->] (C1) to node[above]  {\scriptsize{$i_d$}} (C2);
\draw [->] (C4) to node[right]  {\scriptsize{$\alpha$}} (C2);
\draw [->] (C3) to node[left]  {\scriptsize{$a$}} (C1);
\draw [->] (C3) to node[above]  {\scriptsize{$l_d$}} (C4);
\draw [->] (C5) to node[above]  {\scriptsize{$j_d$}} (C6);
\draw [->] (C6) to node[right]  {\scriptsize{$\pi$}} (C4);
\draw [->] (C5) to node[left]  {\scriptsize{$p$}} (C3);
\draw [->, bend left] (C6) to node[right] {\scriptsize{$\rho= \alpha \circ \pi$}} (C2);
\draw [->, bend right] (C5) to node[left] {\scriptsize{$q = a \circ p$}} (C1);
\end{tikzpicture}
\end{array}
\]
which is referred to later as \eqref{ClosedBaseChange}.

The following corollary is deduced from the results above, and we will later use this result several times.

\begin{BaseChangeCorollary}  \label{BaseChangeCorollary} Suppose that $\E \in D^b(\Coh \, X)$ and $\mathbf{R}p_* \mathbf{L}j_d^* \E$ is a coherent sheaf on $\Spec(R_d)$ for all closed points $d \in \Spec(D)$ with diagrams \eqref{ClosedBaseChange}. Then $\mathbf{R}\pi_{*} \E$ is a coherent sheaf on $\Spec(R)$ flat over $\Spec(D)$.
\end{BaseChangeCorollary}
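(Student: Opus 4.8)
The plan is to apply Bridgeland's Lemma \ref{VitalLemma1} to the complex $\mathbf{R}\pi_* \E$ along the flat affine morphism $\alpha:\Spec(R) \rightarrow \Spec(D)$, whose fibre over a closed point $d$ is precisely $\Spec(R_d) = \Spec(R) \times_{\Spec(D)} \Spec(D/d)$ with inclusion $l_d$. To invoke that lemma I must verify two things: that $\mathbf{R}\pi_* \E$ lies in $D^{-}(\Coh \, \Spec(R))$, and that its derived restriction $\mathbf{L}l_d^* \mathbf{R}\pi_* \E$ to each such fibre is concentrated in a single degree. Granting these, Lemma \ref{VitalLemma1} yields immediately that $\mathbf{R}\pi_* \E$ is a coherent sheaf on $\Spec(R)$ flat over $\Spec(D)$, which is the assertion. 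The first point is routine: since $\pi$ is projective and $\E \in D^b(\Coh \, X)$, Lemma \ref{Bounded Push Down} gives $\mathbf{R}\pi_* \E \in D^b(R\text{-}\mod) \subseteq D^{-}(\Coh \, \Spec(R))$.

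The second point is the crux, and the main obstacle is that the natural Cartesian square relating $l_d$ to $\pi$ has $\pi$ projective rather than flat, so Derived Base Change (Lemma \ref{Derived Base Change}) cannot be applied to it directly. The idea is to route the computation through $\Spec(D)$, where $\rho$ \emph{is} flat, and to exploit that $\alpha$ and its base change $a$ are affine, so that $a_*$ is exact and faithful and hence detects vanishing of cohomology. Concretely, I would assemble the chain of isomorphisms
\[
a_* \mathbf{L}l_d^* \mathbf{R}\pi_* \E \;\cong\; \mathbf{L}i_d^* \alpha_* \mathbf{R}\pi_* \E \;=\; \mathbf{L}i_d^* \mathbf{R}\rho_* \E \;\cong\; \mathbf{R}q_* \mathbf{L}j_d^* \E \;=\; a_* \mathbf{R}p_* \mathbf{L}j_d^* \E,
\]
where the first isomorphism is Derived Base Change (Lemma \ref{Derived Base Change}) for the flat $\alpha$ over the square with vertical maps $\alpha, a$, the middle equality uses $\mathbf{R}\rho_* = \mathbf{R}\alpha_* \mathbf{R}\pi_* = \alpha_* \mathbf{R}\pi_*$ (as $\alpha$ is affine, $\mathbf{R}\alpha_* = \alpha_*$), the third isomorphism is Derived Base Change for the flat $\rho$ over the outer square, and the last equality uses $q = a \circ p$ with $a$ affine.

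By hypothesis $\mathbf{R}p_* \mathbf{L}j_d^* \E$ is a coherent sheaf, so the right-hand side of the display is concentrated in degree $0$, and hence so is $a_* \mathbf{L}l_d^* \mathbf{R}\pi_* \E$. Since $a$ is affine, $a_*$ is exact, so $a_* \mathcal{H}^i(\mathbf{L}l_d^* \mathbf{R}\pi_* \E) = \mathcal{H}^i(a_* \mathbf{L}l_d^* \mathbf{R}\pi_* \E) = 0$ for $i \neq 0$, and since $a_*$ is also faithful this forces $\mathcal{H}^i(\mathbf{L}l_d^* \mathbf{R}\pi_* \E) = 0$ for $i \neq 0$. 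Thus $\mathbf{L}l_d^* \mathbf{R}\pi_* \E$ is a sheaf for every closed point $d$, and Lemma \ref{VitalLemma1} applied to $\mathbf{R}\pi_* \E$ along $\alpha$ finishes the argument. I expect the delicate point to be exactly this transfer of the ``concentrated in degree $0$'' property from $\Spec(D/d)$ back down to $\Spec(R_d)$: it is not a genuine base-change isomorphism $\mathbf{L}l_d^* \mathbf{R}\pi_* \E \cong \mathbf{R}p_* \mathbf{L}j_d^* \E$ on the nose (which would require flatness of $\pi$), but only an isomorphism after applying $a_*$, and it is the exactness and faithfulness of $a_*$ — together with the flatness of $\rho$ coming from the factorization $\rho = \alpha \circ \pi$ — that let this suffice.
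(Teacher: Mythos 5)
Your proposal is correct and follows essentially the same route as the paper's own proof: boundedness of $\mathbf{R}\pi_*\E$ via Lemma \ref{Bounded Push Down}, reduction to Lemma \ref{VitalLemma1} along $\alpha$, and the identical chain of isomorphisms $a_* \mathbf{L}l_d^* \mathbf{R}\pi_* \E \cong \mathbf{L}i_d^* \alpha_* \mathbf{R}\pi_* \E \cong \mathbf{R}q_* \mathbf{L}j_d^* \E \cong a_* \mathbf{R}p_* \mathbf{L}j_d^* \E$ using Lemma \ref{Derived Base Change} twice, together with exactness and faithfulness of $a_*$ for the affine morphism $a$ to detect that $\mathbf{L}l_d^* \mathbf{R}\pi_* \E$ is a sheaf. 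The point you flag as delicate — that one only gets the comparison after applying $a_*$, not a base-change isomorphism on the nose — is precisely how the paper handles it as well.
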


\begin{proof}
As $\pi$ is projective and $\E \in D^b(\Coh \, X)$ it follows that $\mathbf{R}\pi_{*} \E \in D^b(R$-$\mod)$ by Lemma \ref{Bounded Push Down}. It then follows from Lemma \ref{VitalLemma1} that if $\mathbf{L}l_d^*  \mathbf{R}\pi_{*} \E$ is a sheaf on $\Spec(R_d)$ for all closed points $d \in \Spec(D)$ then $\mathbf{R}\pi_{*} \E$ is a coherent sheaf on $\Spec(R)$ which is flat over $\Spec(D)$.

Hence we calculate $\mathbf{L}l_d^* \mathbf{R}\pi_{*} \E$. We note that as $a$ is affine $\mathbf{L}l_d^* \mathbf{R}\pi_{*} \E \in D^b(R$-$\mod)$ is a sheaf on $\Spec(R_d)$ if and only if $a_* \mathbf{L}l_d^* \mathbf{R}\pi_{*} \E$ is a sheaf on $\Spec(D/d)$. Then 
\begin{align*}
a_* \mathbf{L}l_d^* \mathbf{R}\pi_{*} \E & \cong \mathbf{L}i_d^* \alpha_{*} \mathbf{R}\pi_{*} \E \tag{Lemma \ref{Derived Base Change}}\\
&  \cong \mathbf{L}i_d^*  \mathbf{R}\rho_{*} \E  \\
&  \cong  \mathbf{R}q_{*} \mathbf{L}j_d^*   \E  \tag{Lemma \ref{Derived Base Change}} \\
& \cong  a_* \mathbf{R}p_{*} \mathbf{L}j_d^*   \E,
\end{align*}
which is a sheaf on $\Spec(D/d)$ since $\mathbf{R}p_{*} \mathbf{L}j_d^*   \E$ is a sheaf on $\Spec ( R_d)$ by hypothesis.
\end{proof}

\subsection{Definitions of deformations for schemes and algebras} \label{Deformations of Schemes}  \label{Deformations of Algebras}

Here we set the notation for deformations of a scheme.
\begin{Scheme Deformations} \hfil
\begin{enumerate}
\item
A \emph{family of schemes} is a flat morphism $\rho:X \rightarrow Y$ of schemes. The scheme $X$ is the \emph{total space} and $Y$ is the \emph{base space}.

\item
A \emph{pointed scheme}, $(Y,y)$, is a scheme $Y$ with a chosen closed point $y \in Y$. A morphism of pointed schemes $f:(Y,y) \rightarrow (Y',y')$ is a morphism $f:Y \rightarrow Y'$ which maps $y$ to $y'$.

\item
A \emph{deformation} of a scheme $X_0$ is a family of schemes $\rho:X \rightarrow (Y,y)$ with a pointed base space such that there is a chosen morphism $j_y:X_0 \rightarrow X$ inducing an isomorphism between $X_0$ and the fibre of $\rho$ over $y$. That is, the following diagram is a pullback square
\[
\begin{tikzpicture} [bend angle=0, looseness=1]
\node (C1) at (0,0)  {$y$};
\node (C2) at (0,1.6)  {$X_0$};
\node (C3) at (2,0)  {$Y$};
\node (C4) at (2,1.6)  {$X$};

\draw [->,bend right=0] (C2) to node[left]  {} (C1);
\draw [->,bend left=0] (C4) to node[right] {\scriptsize{$\rho$}} (C3);
\draw [right hook->, bend left=0] (C1) to node[above]  {} (C3);
\draw [right hook->, bend left=0] (C2) to node[above]  {\scriptsize{$j_y$}} (C4);
\end{tikzpicture}
\]
and $j_y \times_Y y:X_0 \rightarrow \rho^{-1}(y)$ is an isomorphism. A deformation is defined by the data $(\rho:X \rightarrow (Y,y),j_y:X_0 \rightarrow X)$, and the morphism $j_y$ allows the identification of $X_0$ with the fibre of $X$ over $y$. 

\item
A \emph{morphism} between deformations of $X_0$ is a map \[(f,g):(\rho:X \rightarrow (Y,y),j_y:X_0 \rightarrow X) \rightarrow (\rho':X' \rightarrow (Y',y'),j_{y'}:X_0 \rightarrow X')\] defined by $f:(Y,y)\rightarrow (Y',y')$ a morphism of pointed schemes and $g:X \rightarrow X'$ a morphism of schemes such that the following diagram is a pullback square: 

\begin{center}
$
\begin{tikzpicture} [bend angle=0, looseness=1]
\node (C1) at (0,0)  {$(Y,y)$};
\node (C2) at (0,1.6)  {$X$};
\node (C3) at (2,0)  {$(Y',y')$};
\node (C4) at (2,1.6)  {$X'$};

\draw [->,bend right=0] (C2) to node[left]  {\scriptsize{$\rho$}} (C1);
\draw [->,bend left=0] (C4) to node[right] {\scriptsize{$\rho'$}} (C3);
\draw [->, bend left=0] (C1) to node[above]  {\scriptsize{$f$}} (C3);
\draw [->, bend left=0] (C2) to node[above]  {\scriptsize{$g$}} (C4);
\end{tikzpicture}
$
\end{center}
and $g$ commutes with the inclusions of $X_0$ by $j_y$ and $j_{y'}$.
\end{enumerate}
\end{Scheme Deformations}

There is a similar notion for algebras.

\begin{Algebra Deformations} \hfil
 \begin{enumerate}
\item
 A \emph{flat family of algebras} is a map of algebras $\theta :D \rightarrow A$ such that $D$ is a commutative $\mathbb{C}$-algebra, $A$ is a $D$-algebra and $A$ is flat as a $D$-module.

\item
A \emph{deformation} of an algebra $A_0$ is a flat family of algebras $\theta :D \rightarrow A$ such that there is a chosen maximal ideal $d$ of $D$ and chosen map $u_d: A \rightarrow A_0$ such that  $u_d \otimes_{D} D/d: A\otimes_{D} {D/d} \rightarrow A_0$ is an isomorphism. A deformation is defined by the data $(\theta:D \rightarrow A,u_d:A \rightarrow A_0)$. 

\item
A morphism between deformations of $A_0$ is a map \[(f,g):(\theta: D \rightarrow A,u_d:A\rightarrow A_0) \rightarrow (\theta': D' \rightarrow A',u_{d'}:A'\rightarrow A_0)\] defined by a morphism of algebras $f:D \rightarrow D'$ such that $f^*(d)=d'$ and a morphism of algebras $g:A \rightarrow B$ such that the following diagram is a pushout square
\begin{center}
$
\begin{tikzpicture} [bend angle=0, looseness=1]
\node (C1) at (0,0)  {$D$};
\node (C2) at (0,2)  {$A$};
\node (C3) at (2,0)  {$D'$};
\node (C4) at (2,2)  {$A'$};

\draw [->,bend right=0] (C1) to node[left]  {\scriptsize{$\theta'$}} (C2);
\draw [->,bend left=0] (C3) to node[right] {\scriptsize{$\theta$}} (C4);
\draw [->, bend left=0] (C1) to node[above]  {\scriptsize{$f$}} (C3);
\draw [->, bend left=0] (C2) to node[above]  {\scriptsize{$g$}} (C4);
\end{tikzpicture}
$
\end{center}
and $f$ commutes with the maps $u_d$ and $u_{d'}$.
\end{enumerate}
\end{Algebra Deformations}
Later, the main objective we be to study deformations over local Artinian or complete local Noetherian affine schemes, with chosen closed point corresponding to the unique maximal ideal.

\section{Infinitesimal deformations and tilting} \label{Results}  \label{Lifting Tilting Bundles} In this section we consider a deformation of a scheme over a complete local Noetherian ring and prove that a tilting bundle on the scheme lifts to a tilting bundle on the deformation. This produces a deformation of the endomorphism algebra defined by the tilting bundle. 

\begin{LiftBundle} Recall that a \emph{lift} of a coherent sheaf $\mathcal{E}_0$ on a scheme $X_0$ to a deformation of that scheme $\left(\rho:X \rightarrow \Spec \, D,j_d:X_0 \rightarrow X \right)$ is a coherent sheaf $\E$ on $X$ which is flat over $\Spec(D)$ together with an isomorphism of sheaves $j_d^* \E \rightarrow \E_0$. If $\E_0$ is locally free then $\E$ is also locally free (see \cite[Exercise 7.1]{HartDef}).
\end{LiftBundle}

The following notation is used in the two lemmas and theorem below. Let $p:X_0 \rightarrow \Spec(R_0)$ be a projective morphism of Noetherian schemes over $\mathbb{C}$, suppose that the data $\left(\rho:X \rightarrow \Spec \, D, j_d:X_0 \rightarrow X \right)$ defines a deformation of $X_0$ over a complete local Noetherian ring $(D,d)$, and assume that $\rho$ factors through a projective morphism $\pi:X \rightarrow \Spec(R)$ followed by a flat affine morphism $\alpha:\Spec(R) \rightarrow \Spec(D)$ such that $R \otimes_D D/d \cong R_0$.

\begin{TiltingDeform} \label{TiltingDeform}
If $\mathcal{E}_0$ is a locally free coherent sheaf on $X_0$ such that $\Ext^i_{X_0}(\mathcal{E}_0,\mathcal{E}_0)=0$ for $i \neq 0$ then $\mathcal{E}_0$  lifts uniquely to a locally free coherent sheaf $\mathcal{E}$ on $X$ such that $\Ext^i_{X}(\E,\E)=0$ for $i \neq 0$. Moreover $A:=\End_{X}(\E)$ is flat over $D$.
\end{TiltingDeform}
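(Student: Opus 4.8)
The plan is to construct the lift by the standard inductive deformation argument over the Artinian quotients of $(D,d)$, and then to extract the cohomological vanishing and the flatness of $A$ from a single application of the derived base change results of the previous subsection.

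For the construction, set $D_n:=D/d^{n+1}$ and $X_n:=X\times_{\Spec D}\Spec D_n$, so that $X_0$ is the closed fibre and each $X_n\hookrightarrow X_{n+1}$ is a square-zero thickening with ideal isomorphic to $\mathcal{O}_{X_0}\otimes_{\mathbb{C}}(d^{n+1}/d^{n+2})$ by flatness. I would lift $\mathcal{E}_0$ up this tower inductively: the obstruction to extending $\mathcal{E}_n$ to a $D_{n+1}$-flat locally free sheaf on $X_{n+1}$ lies in $\Ext^2_{X_0}(\mathcal{E}_0,\mathcal{E}_0)\otimes_{\mathbb{C}}(d^{n+1}/d^{n+2})$, and when this vanishes the extensions form a torsor under $\Ext^1_{X_0}(\mathcal{E}_0,\mathcal{E}_0)\otimes_{\mathbb{C}}(d^{n+1}/d^{n+2})$ (see \cite{HartDef}). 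Both groups vanish by hypothesis, so at each stage the lift exists and is unique up to unique isomorphism, yielding a unique compatible system $\{\mathcal{E}_n\}$, i.e.\ a coherent sheaf on the formal completion of $X$ along $X_0$. I would then algebraise this to a genuine coherent sheaf $\mathcal{E}$ on $X$ with $j_d^*\mathcal{E}\cong\mathcal{E}_0$; flatness of $\mathcal{E}$ over $D$ passes up from the $D_n$-flatness of the $\mathcal{E}_n$ by the local criterion of flatness, and local freeness then follows from local freeness of $\mathcal{E}_0$ as in the definition of a lift. Uniqueness of $\mathcal{E}$ as a lift follows from uniqueness of the formal system together with the algebraisation equivalence.

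The step I expect to be the main obstacle is precisely this algebraisation. Although $\pi:X\to\Spec R$ is projective and $(D,d)$ is complete, the ring $R$ itself need not be $dR$-adically complete, so Grothendieck's existence theorem does not apply to $\pi$ directly; the work lies in passing to the $dR$-adic completion of $R$, applying the existence theorem there, and descending the resulting sheaf back to $X$ (or in invoking the existence theorem in its relative proper-over-affine-over-complete-local form).

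Granting the lift, both remaining assertions reduce to one base change computation. Since $\mathcal{E}$ is locally free it is perfect, so Lemma \ref{DualPullback} gives $\mathbf{L}j_d^*\RCalHom_X(\mathcal{E},\mathcal{E})\cong\RCalHom_{X_0}(\mathcal{E}_0,\mathcal{E}_0)\cong\CalEnd_{X_0}(\mathcal{E}_0)$, using that $\mathcal{E}_0$ is locally free and $\mathbf{L}j_d^*\mathcal{E}\cong\mathcal{E}_0$ by the flatness just established. Because $\Spec R_0$ is affine and $\Ext^i_{X_0}(\mathcal{E}_0,\mathcal{E}_0)=\Gamma(\Spec R_0,\mathbf{R}^ip_*\CalEnd_{X_0}(\mathcal{E}_0))$ vanishes for $i\neq 0$ by hypothesis, the complex $\mathbf{R}p_*\mathbf{L}j_d^*\CalEnd_X(\mathcal{E})\cong\mathbf{R}p_*\CalEnd_{X_0}(\mathcal{E}_0)$ is concentrated in degree zero, hence a coherent sheaf. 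As $d$ is the unique closed point of the local ring $D$, Corollary \ref{BaseChangeCorollary} applied to $\CalEnd_X(\mathcal{E})\in D^b(\Coh X)$ then shows that $\mathbf{R}\pi_*\CalEnd_X(\mathcal{E})$ is a coherent $\mathcal{O}_R$-module, concentrated in degree zero and flat over $D$. Taking global sections over the affine scheme $\Spec R$, the vanishing of higher cohomology gives $\Ext^i_X(\mathcal{E},\mathcal{E})=H^i\big(\mathbf{R}\Gamma(\Spec R,\mathbf{R}\pi_*\CalEnd_X(\mathcal{E}))\big)=0$ for $i\neq 0$, while for $i=0$ it identifies $A=\End_X(\mathcal{E})$ with the $D$-flat module of global sections, so that $A$ is flat over $D$ as required.
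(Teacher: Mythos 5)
Your proposal is correct and is essentially the paper's own argument: the paper likewise lifts $\mathcal{E}_0$ inductively over the Artinian quotients $D_n=D/d^{n+1}$ via \cite[Theorem 7.1]{HartDef}, passes from the compatible system $\{\mathcal{E}_n\}$ to a sheaf on $X$ by algebraization --- the step you flag as the main obstacle is dispatched there by a single citation to \cite[Remark 29.22.8 (Tag 088F)]{StacksProject} --- and then obtains both the $\Ext$-vanishing and the flatness of $A$ from exactly your base-change computation (Lemma \ref{DualPullback} followed by Corollary \ref{BaseChangeCorollary}, using that $d$ is the only closed point of $\Spec(D)$). The one organizational difference is that the paper states the obstruction and uniqueness spaces on $X_i$ rather than on $X_0$, so it must prove the base-change consequence (any lift has vanishing higher self-$\Ext$s) first and re-invoke it at every stage of the induction to get $\Ext^j_{X_i}(\mathcal{E}_i,\mathcal{E}_i)=0$, whereas your placement of these spaces on the closed fibre --- equivalent for locally free sheaves --- keeps the induction self-contained and uses the base-change corollary only once, at the end.
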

\begin{proof}

We first prove that if a lift exists then it satisfies the vanishing and flatness properties. If a lift $\E$ of $\E_0$ exists then by Lemma \ref{DualPullback} $\mathbf{L} j_d^* \RCalHom_{X}(\E,\E) \cong \RCalHom_{X_0}(\mathbf{L} j_d^*\E,\mathbf{L} j_d^*\E)$, hence as $\mathbf{R}p_*\RCalHom_{X_0}(\mathbf{L}j_d^*\E,\mathbf{L}j_d^*\E) \cong \mathbf{R}\Hom_{X_0}(\E_0,\E_0)$ is a coherent sheaf on $\Spec(R_0)$  it follows by Corollary \ref{BaseChangeCorollary} that $\mathbf{R}\Hom_{X}(\E,\E)$ is a coherent sheaf on $\Spec(R)$ and is flat over $\Spec(D)$. In particular both $\Ext^i_{X}(\mathcal{E},\mathcal{E})=0$ for $i \neq 0$ and $A=\End_{X}(\mathcal{E})$ is flat as a $D$-module.

We then check that a lift of $\E_0$ does exist. We reproduce an argument here similar to the proof of \cite[Theorem 2.10]{SympMcKay}. Define the family of local Artinian $\mathbb{C}$-algebras $(D_{n},d_{n}):= (D/d^{n+1},d/d^{n+1})$ and the inverse limit of this family is $(D,d)$. We define $R_{n} = R \otimes_D D_n$ and $R$ is the inverse limit of the $(R_n)$, and similarly we define $X_{n} :=X \times_{\Spec(D)} \Spec(D_n)$, which are a family of deformations $q_n:X_{n}\rightarrow \Spec(D_n)$ of $X_0$ over the Artinian, graded local rings $D_n$ such that each $X_{n}$ is projective over $\Spec(R_{n})$. We show by induction that there is a unique lift of $\E_0$ to each $X_i$. Suppose $\E_i$ is a lift of $\E_0$ to $X_i$. By \cite[Theorem 7.1]{HartDef} if $\CH^2(X_i,\CalEnd_{X_i}(\mathcal{E}_i)\otimes_{X_i} q_i^*d_i)=0$ a lift of $\E_i$ to $X_{i+1}$ exists and if $\CH^1(X_i,\CalEnd_{X_i}(\mathcal{E}_i)\otimes_{X_i} q_i^*d_i)=0$ it is unique. But by the previous paragraph $\E_i$ satisfies $\Ext^j_{X_i}(\mathcal{E}_i,\mathcal{E}_i)=0$ for $j \neq 0$, and so we can deduce  $\CH^j(X_i,\CalEnd_{X_i}(\mathcal{E}_i)\otimes_{X_i} q_i^*d_i)\cong \mathbf{R}^j q_{i*}(\CalEnd_{X_i}(\mathcal{E}_i)\otimes_{X_i} q_i^*d_i) \cong  \Ext^j_{X_i}(\mathcal{E}_i,\mathcal{E}_i)\otimes_{\mathbb{C}} d_i =0 $ for $j \neq 0$ and a unique lift $\E_{i+1}$ on $X_{i+1}$ exists. By induction $\E_0$ lifts uniquely to a locally free coherent sheave $\E_i$ on each $X_i$, and so by \cite[Remark 29.22.8 (Tag 088F)]{StacksProject} it lifts uniquely to a locally free coherent sheaf $\E$ on $X$. 

\end{proof}

\begin{BetterGeneration} \label{BetterGeneration} Suppose that $T_0$ is a tilting bundle on $X_0$ and $T$ is the lift of  $T_0$ to $X$, then $T$ is a tilting bundle on $X$.
\end{BetterGeneration}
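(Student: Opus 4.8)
By Lemma~\ref{TiltingDeform} the lift $T$ is already known to be a locally free coherent sheaf with $\Ext^i_X(T,T)=0$ for $i\neq 0$, so conditions (1) and (3) of Definition~\ref{TiltingDef} hold automatically. By the criterion of Remark~\ref{RemarkTU} it therefore suffices to establish the generation condition: that $\mathbf{R}\Hom_X(T,\mathcal{E})\cong 0$ forces $\mathcal{E}\cong 0$ for every $\mathcal{E}\in D^{-}(\Coh\,X)$. The plan is to test this against the central fibre, where $T$ restricts to the tilting bundle $T_0$.

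Fix such an $\mathcal{E}$ with $\mathbf{R}\Hom_X(T,\mathcal{E})\cong 0$ and set $\mathcal{G}:=\RCalHom_X(T,\mathcal{E})$, so that $\mathbf{R}\Hom_X(T,\mathcal{E})\cong\mathbf{R}\rho_*\mathcal{G}$ as an object of $D(D)$ since $\alpha$ is affine. Applying $\mathbf{L}i_d^*=(-)\otimes^{\mathbf{L}}_D D/d$ and using that $\rho$ is flat, Lemma~\ref{Derived Base Change} for the outer square of \eqref{ClosedBaseChange} gives $\mathbf{L}i_d^*\mathbf{R}\rho_*\mathcal{G}\cong\mathbf{R}q_*\mathbf{L}j_d^*\mathcal{G}$; and since $T$ is perfect, Lemma~\ref{DualPullback} identifies $\mathbf{L}j_d^*\mathcal{G}\cong\RCalHom_{X_0}(T_0,\mathbf{L}j_d^*\mathcal{E})$. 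Combining these,
\[
\mathbf{R}\Hom_X(T,\mathcal{E})\otimes^{\mathbf{L}}_D D/d\;\cong\;\mathbf{R}\Hom_{X_0}(T_0,\mathbf{L}j_d^*\mathcal{E}).
\]
The left-hand side vanishes, so $\mathbf{R}\Hom_{X_0}(T_0,\mathbf{L}j_d^*\mathcal{E})\cong 0$; as $\mathbf{L}j_d^*\mathcal{E}\in D^{-}(\Coh\,X_0)$ and $T_0$ is tilting, the generation criterion of Remark~\ref{RemarkTU} applied on $X_0$ yields $\mathbf{L}j_d^*\mathcal{E}\cong 0$.

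\textbf{The main obstacle} is passing from $\mathbf{L}j_d^*\mathcal{E}\cong 0$ to $\mathcal{E}\cong 0$. Because $\rho$ is only projective-over-affine and not proper, a coherent sheaf on $X$ can restrict to zero on the central fibre without vanishing (it may be supported over non-closed points of $\Spec D$), so a naive Nakayama argument fails and completeness of $D$ alone is insufficient. I would instead run the identical base-change computation at an arbitrary point $z\in\Spec D$, obtaining $\mathbf{R}\Hom_{X_z}(T_z,\mathbf{L}j_z^*\mathcal{E})\cong 0$, and then argue by Noetherian induction on $\Spec D$: localizing at a generic point $\eta$ of a component of the closure of $\rho(\mathrm{Supp}\,\mathcal{E})$ shows $\mathbf{L}j_\eta^*\mathcal{E}\cong 0$ \emph{provided} $T_\eta$ generates $D^{-}(\Coh\,X_\eta)$, which confines the support of $\mathcal{E}$ to a proper closed subscheme $\Spec(D/I)\subsetneq\Spec D$; since $D/I$ is again complete local with the same central fibre, one repeats until $\mathcal{E}$ is supported set-theoretically on $X_0$, where $d^N\mathcal{E}=0$ for some $N$ and the ordinary Nakayama lemma finishes. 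The delicate input that makes this work — and the step I expect to require the most care — is showing that $T$ restricts to a generating (equivalently tilting) bundle on the non-central fibres $X_z$: the Ext-vanishing on each fibre is immediate from flatness of $\RCalHom_X(T,T)$ over $D$ together with base change, but generation itself must be propagated from the single known fibre $X_0$ using the flat family structure, rather than verified fibre-by-fibre.
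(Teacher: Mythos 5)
Your opening moves coincide exactly with the paper's proof: conditions (1) and (3) of Definition \ref{TiltingDef} from Lemma \ref{TiltingDeform} and compactness, reduction to the criterion of Remark \ref{RemarkTU}, and the base-change computation via Lemmas \ref{Derived Base Change} and \ref{DualPullback} showing that $\mathbf{R}\Hom_X(T,\E)\cong 0$ forces $\mathbf{L}j_d^*\E\cong 0$. The genuine gap is in what you do next. Your Noetherian induction needs $T_z$ to generate $D^{-}(\Coh\,X_z)$ on fibres over arbitrary (in particular non-closed) points $z\in\Spec(D)$, and you explicitly defer exactly that point; but fibrewise generation is essentially the statement being proved, run in a family, and no mechanism is offered for propagating it from the one fibre where it is known. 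As written the argument is circular: generation on $X$ is reduced to generation on the other fibres $X_z$, which is just as hard, so the proposal is not a proof. (The final step of the induction is also shakier than you suggest: once $\E$ is supported set-theoretically over $d$ it is still a complex, not a sheaf, so ``ordinary Nakayama'' does not directly apply without a spectral-sequence argument.)

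The step you call the main obstacle is closed in the paper by a tool you never invoke: Lemma \ref{VitalLemma1} (Bridgeland's spectral-sequence lemma). Its hypothesis concerns only fibres over \emph{closed} points of the base, and since $(D,d)$ is local, $d$ is the \emph{unique} closed point of $\Spec(D)$; thus the vanishing $\mathbf{L}j_d^*\E\cong 0$ you already have is precisely that hypothesis, and the conclusion upgrades the complex $\E$ to a coherent sheaf on $X$ flat over $\Spec(D)$. Then Nakayama applied to the sheaf finishes: $j_d^*\E = 0$ forces $\E=0$. Your worry that a coherent sheaf could be supported entirely away from the central fibre is what the projective-over-local structure rules out: the support of a nonzero coherent sheaf contains a closed point of $X$, whose image under the proper map $\pi$ is a closed point of $\Spec(R)$, and every maximal ideal of $R$ contains $dR$ in the setting the paper works in (this is automatic when $R$ is complete local, as it is in all the paper's applications; it is the implicit locality hypothesis under which both Lemma \ref{VitalLemma1} in its Noetherian form and the final Nakayama step operate). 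So supports do meet $X_0$, the feared pathology cannot occur, and no information about non-central fibres of $\rho$ is ever required. Your instinct that non-properness of $\rho$ is the delicate point is reasonable, but the remedy is this one observation after Bridgeland's lemma has converted derived-fibre vanishing at the single closed point into a statement about an honest flat sheaf -- not a fibre-by-fibre induction.
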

\begin{proof}
Firstly, $T$  satisfies Definition \ref{TiltingDef}(1) as $\Ext^i_{X}(T,T)=0$ for $i\neq 0$ by Lemma \ref{TiltingDeform}, and $T$ also satisfies Definition \ref{TiltingDef}(3) as $T$ is a coherent locally free sheaf hence a compact object of $D(X)$. Hence to prove that $T$ is a tilting bundle we need only show that $T$ generates, Definition \ref{TiltingDef}(2), and to do this we check the condition outlined in  Remark \ref{RemarkTU}: that $\mathbf{R}\Hom_{X}(T,\mathcal{F})\cong 0$ implies $\mathcal{F}\cong 0$ for $\mathcal{F} \in D^{-}(\Coh \, X)$. Suppose that $\E \in D^{-}(\Coh \, X)$ and $\mathbf{R}\Hom_{X}(T,\mathcal{F}) \cong 0$ and consider the base change diagram (\ref{ClosedBaseChange}) for the inclusion of the unique closed point $d \in \Spec(D)$. Then 
\begin{align*}
0 & \cong \mathbf{L} i_d^* \mathbf{R}\Hom_{X}(T,\mathcal{F}) \\
& \cong \mathbf{L} i_d^* \mathbf{R} \rho_{*} \RCalHom_{X}(T,\mathcal{F}) \\
& \cong  \mathbf{R} q_* \mathbf{L} j_d^* \RCalHom_{X}(T,\mathcal{F}) \tag{Lemma \ref{Derived Base Change}} \\
& \cong \mathbf{R}\Hom_{X_0}(\mathbf{L}j_d^*T, \mathbf{L}j_d^*\mathcal{F}). \tag{Lemma \ref{DualPullback}}
\end{align*}
As $\mathbf{L}j_d^*T \cong T_0$ is a tilting bundle on $X_0$ and $\mathbf{R}\Hom_{X_0}(T_0,\mathbf{L}j_d^*\mathcal{F})=0$ it follows that $\mathbf{L}j_d^* \mathcal{F}=0$. We deduce that $\mathcal{F}$ is a coherent sheaf on $X$ which is flat over $\Spec(D)$ by Lemma \ref{VitalLemma1}, and hence $j_d^*\mathcal{F}=0$ so $\mathcal{F}=0$ by Nakayama's lemma.
\end{proof}

The following theorem is the main result of this section.

\begin{Main} \label{Main} 
Any tilting bundle $T_0$ on $X_0$ lifts uniquely to a tilting bundle $T$ on $X$. If $A_0=\End_{X_0}(T_0)$ then  $A:=\End_{X}(T)$ is a $D$-algebra which is flat as a $D$-module, there is a map $A \rightarrow A_0$ such that $A \otimes_{D} D/d \cong A_0$, and this map and isomorphism defines $D \rightarrow A$ as a deformation of $A_0$. 
\end{Main}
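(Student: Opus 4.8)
The plan is to assemble the two preceding lemmas and then perform a single base change computation to equip $A$ with its deformation structure; the genuinely new content of the theorem beyond Lemmas \ref{TiltingDeform} and \ref{BetterGeneration} is the identification of the special fibre together with its compatibility with the algebra structure.

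First I would note that a tilting bundle $T_0$ satisfies Definition \ref{TiltingDef}(1), i.e.\ $\Ext^i_{X_0}(T_0,T_0)=0$ for $i\neq 0$, so Lemma \ref{TiltingDeform} applies directly. It furnishes a unique locally free coherent lift $T$ of $T_0$ to $X$ with $\Ext^i_X(T,T)=0$ for $i\neq 0$, and simultaneously gives that $A:=\End_X(T)$ is flat over $D$. Lemma \ref{BetterGeneration} then promotes $T$ to a tilting bundle on $X$, and its uniqueness as a tilting bundle is inherited from the uniqueness of the lift in Lemma \ref{TiltingDeform}. This disposes of the first sentence and of the flatness assertion.

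To build the comparison map, I would use the isomorphism $j_d^*T\cong T_0$ that accompanies the lift and define $u_d\colon A\to A_0$ as the restriction map $j_d^*\colon\Hom_X(T,T)\to\Hom_{X_0}(j_d^*T,j_d^*T)$ followed by the identification $\Hom_{X_0}(j_d^*T,j_d^*T)\cong\End_{X_0}(T_0)=A_0$. To identify the special fibre I would compute $\mathbf{L}i_d^*A$ via the base change square \eqref{ClosedBaseChange}, using that $A\cong\mathbf{R}\rho_*\RCalHom_X(T,T)$ is concentrated in degree $0$ since the higher $\Ext$ groups vanish: as $\rho$ is flat, Lemma \ref{Derived Base Change} moves $\mathbf{L}i_d^*$ past $\mathbf{R}\rho_*$, and Lemma \ref{DualPullback} applied to the perfect complex $T$ along $j_d$ yields
\begin{align*}
\mathbf{L}i_d^* A &\cong \mathbf{L}i_d^*\mathbf{R}\rho_*\RCalHom_X(T,T) \\
&\cong \mathbf{R}q_*\mathbf{L}j_d^*\RCalHom_X(T,T) \tag{Lemma \ref{Derived Base Change}} \\
&\cong \mathbf{R}q_*\RCalHom_{X_0}(T_0,T_0) \tag{Lemma \ref{DualPullback}} \\
&\cong \End_{X_0}(T_0)=A_0,
\end{align*}
the last line being concentrated in degree $0$ because $T_0$ is tilting. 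Since $A$ is flat over $D$ and concentrated in degree $0$, the left-hand side is simply $A\otimes_D D/d$, giving $A\otimes_D D/d\cong A_0$.

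The step I expect to require the most care is checking that all of this respects multiplication, since the base change isomorphisms of Lemmas \ref{Derived Base Change} and \ref{DualPullback} are a priori only isomorphisms of $D/d$-modules, whereas the statement demands that $u_d$ be a ring homomorphism and that $u_d\otimes_D D/d$ be the algebra isomorphism $A\otimes_D D/d\cong A_0$. I would resolve this by observing that $u_d$ is literally $j_d^*$ composed with the lift isomorphism, both of which respect composition of morphisms of sheaves and hence the Yoneda products defining $A$ and $A_0$; reducing the degree-$0$ identity above modulo $d$ then identifies $u_d\otimes_D D/d$ with the constructed isomorphism. Once this compatibility is in place, the data $(\theta\colon D\to A,\ u_d\colon A\to A_0)$ satisfies the definition of a deformation of $A_0$ recalled in Section \ref{Deformations of Algebras}, and the construction manifestly depends only on $j_d$, which yields the claimed deformation.
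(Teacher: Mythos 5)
Your proposal is correct and follows essentially the same route as the paper: Lemmas \ref{TiltingDeform} and \ref{BetterGeneration} give the unique tilting lift and flatness, the map $u_d$ is the restriction along $j_d$ (the paper phrases it via the unit $T \to j_{d*}j_d^*T$, which is the same map), and the fibre identification $A\otimes_D D/d\cong A_0$ is the identical chain of isomorphisms using Lemmas \ref{Derived Base Change} and \ref{DualPullback}. Your extra paragraph checking that the base change isomorphism respects the ring structure is a point the paper passes over silently, and is a welcome refinement rather than a departure.
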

\begin{proof}
Let $T_0$ be a tilting bundle on $X_0$. By Lemmas \ref{TiltingDeform} and \ref{BetterGeneration} $T_0$ lifts uniquely to a tilting bundle $T$ on $X$ and $A=\End_{X}(T)$ is a flat module over $D$. In particular the flat morphism $\rho:X \rightarrow \Spec(D)$ produces a map $\theta:D \rightarrow A=\End_{X}(T)$ defining $A$ as a flat $D$-algebra. The map $j_d:X_0 \rightarrow X$ defines a unit map $T \rightarrow j_{d*}j_d^*T$ and hence as $j_d^*T\cong T_0$ applying $\Hom_X(-,T)$ gives a morphism $u_d :A= \Hom_{X}(T,T) \rightarrow \Hom_{X}(T,j_{d*}j_d^* T) \cong \Hom_{X_0}(j_d^*T,j_d^*T) \cong A_0$. Then 
\begin{align*}
A \otimes_{D}D/d & := \mathbf{L}i_d^*\mathbf{R}\rho_{*} \RCalHom_{X}(T,T) \tag{$A$ is flat over $D$} \\
&=\mathbf{R}q_{*} \mathbf{L}j_d^* \RCalHom_{X}(T,T) \tag{Lemma \ref{Derived Base Change}}\\
&  \cong  \mathbf{R}\Hom_{X_0}(\mathbf{L}j_d^* T, \mathbf{L}j_d^*T)  \tag{Lemma \ref{DualPullback}}\\
 &\cong \mathbf{R}\Hom_{X_0}(T_0,T_0)=A_0 \tag{$j_d^*T \cong T_0$}.
\end{align*}
This isomorphism $A \otimes_D D/d \cong A_0$ is uniquely defined by the identification of $X_0$ with the fibre of $X$ over $d$ and the isomorphism $j_d^*T \cong T_0$. Hence $(\theta:D \rightarrow A, u_d:A \rightarrow A_0$) is a deformation of $A_0$ determined by the deformation of $X_0$.
\end{proof}

\section{$\mathbb{C}^*$-Equivariant case} \label{GradedCase}
The results of the previous section rely on the assumption that the deformation is over a scheme with a unique closed point. We consider an extension which maintains this property in a more global setting: working with a deformations over a base space with a good $\mathbb{C}^*$-action. Such schemes are not necessarily local but have a unique $\mathbb{C}^*$-fixed closed point which occurs in the closure of every orbit, and we extend our results to this setting.

\subsection{Basics on graded rings and $\mathbb{C}^*$-actions} 
In this section we recall the basic notions, and set notation.

\begin{GradedDefinitions} \hfil
\begin{enumerate}
\item
A \emph{graded $\mathbb{C}$-algebra} is a $\mathbb{C}$-algebra $R$ with a decomposition $R=\bigoplus_{i \in \mathbb{Z}} R_i$ as a $\mathbb{C}$-vector space such that $R_iR_j \subset R_{i+j}$ for all $i,j \in \mathbb{Z}$. It is \emph{positively graded} if $R_i=0$ for $i <0$.

\item
A \emph{graded $R$-module} is an $R$-module with a decomposition $M=\bigoplus_{i \in \mathbb{C}} M_i$ as a $\mathbb{C}$-vector space such that $R_iM_j \subset M_{i+j}$ for all $i,j \in \mathbb{Z}$.

\item
A \emph{graded ideal} is an ideal which is also a graded module such that inclusion morphism $\sigma$ satisfies $\sigma(N_i) \subset M_i$ for all $i$. 

\item
A \emph{graded maximal ideal} is a graded ideal which is maximal under inclusion among proper graded ideals. 

\item
A graded algebra is \emph{graded local} if it has a unique maximal graded ideal. If $R$ is a positively graded $\mathbb{C}$-algebra and $\frak{m}$ is a graded maximal ideal then $R/\frak{m} \cong \mathbb{C}$.

\item
We note that inverse limits exist in the category of positively graded rings. A positively graded algebra $R$ is \emph{graded complete} with respect to a graded maximal ideal $\mathfrak{m}$ if the projective limit of $(R/ \mathfrak{m}^n)_{n \ge 0}$ in the category of positively graded rings is isomorphic to $R$. 
\end{enumerate}
\end{GradedDefinitions}

\begin{GradedRingRemarks}
Graded local rings are not necessarily local rings, and rings which are graded complete with respect to a graded ideal are not necessarily complete with respect to that ideal. For example, the graded ring $\mathbb{C}[x]$ with $x$ in degree 1 has unique graded maximal ideal $(x)$ and is graded complete with respect to this ideal however is neither a local ring nor complete with respect to $(x)$ as an ungraded ring.

The graded completion of a positively graded ring with respect to a maximal ideal graded in strictly positive degree can be recovered from the (ungraded) completion of the ring by taking the vector subspace spanned by eigenvectors of the corresponding $\mathbb{C}^*$-action, see \cite[Lemma A2]{NamikawaFlopsandPoisson}.
\end{GradedRingRemarks}

We recall the definition of a $\mathbb{C}^*$-action on a scheme.

\begin{C*Action} \label{C* Action} \hfil
\begin{enumerate}
\item
A $\mathbb{C}^*$-action on a finite type scheme $X$ is defined by an \emph{action map}
\begin{align*}
\sigma: \mathbb{C}^* \times X  & \rightarrow X 
\end{align*}
such that the following diagrams commute
\begin{align*}
\begin{tikzpicture} [bend angle=0]
\node (C1) at (0,0)  {$\mathbb{C}^* \times X$};
\node (C2) at (3,0)  {$X$};
\node (C3) at (0,1.6)  {$\mathbb{C}^* \times \mathbb{C}^* \times X$};
\node (C4) at (3,1.6)  {$\mathbb{C}^* \times X$};
\draw [->] (C1) to node[above]  {\scriptsize{$\sigma$}} (C2);
\draw [->] (C4) to node[right]  {\scriptsize{$\sigma$}} (C2);
\draw [->] (C3) to node[left]  {\scriptsize{$id_{\mathbb{C}^*} \times\sigma$}} (C1);
\draw [->] (C3) to node[above]  {\scriptsize{$m \times id_{X}$}} (C4);
\end{tikzpicture}
\qquad
\begin{tikzpicture} [bend angle=0]
\node (B1) at (0,0)  {$X$};
\node (B2) at (0,1.6)  {$\mathbb{C}^* \times X$};
\node (B3) at (2.2,1.6)  {$X$};
\draw [->] (B1) to node[below]  {\scriptsize{$id_X$}} (B3);
\draw [->] (B1) to node[left]  {\scriptsize{$1 \times id_X$}} (B2);
\draw [->] (B2) to node[above]  {\scriptsize{$\sigma$}} (B3);
\end{tikzpicture}
\end{align*}
where $m$ is the multiplication map, $m: \mathbb{C}^* \times \mathbb{C}^* \rightarrow \mathbb{C}^*$, and we also have the projection map $\beta:\mathbb{C}^* \times X \rightarrow X$.

\item
A morphism $f:X \rightarrow Y$ between two schemes with $\mathbb{C}^*$-actions is $\mathbb{C}^*$-\emph{equivariant} if the following diagram commutes
\begin{align*}
\begin{tikzpicture} [bend angle=0]
\node (C1) at (0,0)  {$X$};
\node (C2) at (3,0)  {$Y$};
\node (C3) at (0,1.6)  {$\mathbb{C}^* \times X$};
\node (C4) at (3,1.6)  {$\mathbb{C}^* \times Y$};
\draw [->] (C1) to node[above]  {\scriptsize{$f$}} (C2);
\draw [->] (C4) to node[right]  {\scriptsize{$\sigma$}} (C2);
\draw [->] (C3) to node[left]  {\scriptsize{$\sigma$}} (C1);
\draw [->] (C3) to node[above]  {\scriptsize{$(id \times f)$}} (C4);
\end{tikzpicture}
\end{align*}

\item
If $X$ is a scheme with $\mathbb{C}^*$-action a quasicoherent sheaf $\mathcal{F}$ on $X$ is $\mathbb{C}^*$-\emph{equivariant} if it is equipped with an isomorphism
\begin{equation*}
\phi: \sigma^* \mathcal{F} \xrightarrow{\sim} \beta^* \mathcal{F}
\end{equation*}
such that the pullback $(1 \times id_X)^*\phi:\mathcal{F} \rightarrow\mathcal{F}$ is the identity and the following diagram commutes in the category of coherent sheaves on $\mathbb{C}^* \times \mathbb{C}^* \times X$
\begin{align*}
\begin{tikzpicture} [bend angle=0]
\node (C1) at (0,0)  {$(id_{\mathbb{C}^*} \times \sigma)^*a^*\mathcal{F}$};
\node (C2) at (3,0)  {$(m\times id_X)^* \sigma^* \mathcal{F}$};
\node (C3) at (0,1.6)  {$(id_{\mathbb{C}^*} \times \sigma)^*p^*\mathcal{F}$};
\node (C4) at (3,1.6)  {$\beta^*\mathcal{F}$};
\draw [->] (C1) to node[above]  {\scriptsize{$\cong$}} (C2);
\draw [->] (C2) to node[right]  {\scriptsize{$(m \times id_X)^*\phi$}} (C4);
\draw [->] (C1) to node[left]  {\scriptsize{$(id_{\mathbb{C}^*} \times \sigma)^*\phi$}} (C3);
\draw [->] (C3) to node[above]  {\scriptsize{$\beta_{23}^*\phi$}} (C4);
\end{tikzpicture}
\end{align*}
where $\beta_{23}$ is projection onto the second and third components of $\mathbb{C}^* \times \mathbb{C}^* \times X$. When we refer to a $\mathbb{C}^*$-equivariant sheaf we will often suppress the chosen isomorphism $\phi$.
\end{enumerate}
\end{C*Action}
 We denote the abelian category of $\mathbb{C}^*$-equivariant quasicoherent sheaves on $X$ by $\QCoh^{\mathbb{C}^*} X$, where the objects are $\mathbb{C}^*$-equivariant quasicoherent sheaves on $X$ and the morphisms in this category between two objects $\mathcal{E},\mathcal{F}$ are the $\mathbb{C}^*$-invariant morphisms in $\Hom_X(\E,\mathcal{F})$, which we denote by $\Hom^{\mathbb{C}^*}_X(\E,\mathcal{F})$. We let $\Coh^{\mathbb{C}^*} X$ denote the full abelian subcategory of $\mathbb{C}^*$-equivariant coherent sheaves. We also recall that there are enough injectives in $\QCoh^{\mathbb{C}^*} X$, \cite[Corollary 1.5.6]{VdBEquivariant}, hence the unbounded derived category of $\mathbb{C}^*$-equivariant quasicoherent sheaves exists and we denote it by $D_{\mathbb{C}^*}(X)$.

The following lemmas recalls properties of $\mathbb{C}^*$-equivariant coherent sheaves.
\begin{ActionLemma}[See e.g. {\cite[Section 3.3]{BlumeThesis}}] \label{Action Lemma} If $X$ and $Y$ are Noetherian schemes with $\mathbb{C}^*$-actions and $\pi:X \rightarrow Y$ is a $\mathbb{C}^*$-equivariant separated morphism then:
\begin{enumerate}
\item If $\mathcal{F} \in \Coh^{\mathbb{C}^*}  X$ is locally free and $\mathcal{G} \in \QCoh^{\mathbb{C}^*}X$ then $\SHom_{X}(\mathcal{F},\mathcal{G}) \in \QCoh^{\mathbb{C}^*}  X$.
\item If $\mathcal{F}, \mathcal{G} \in \QCoh^{\mathbb{C}^*}  X$ then $\mathcal{F} \otimes_X \mathcal{G} \in \QCoh^{\mathbb{C}^*}  X$.
\item If $\mathcal{F} \in \QCoh^{\mathbb{C}^*} \, X$ and $\mathcal{G} \in \QCoh^{\mathbb{C}^*} \, Y$ then $\pi_* \mathcal{F} \in \QCoh^{\mathbb{C}^*} Y$ and $\pi^* \mathcal{G} \in \QCoh^{\mathbb{C}^*}  X$.
\end{enumerate}
\end{ActionLemma}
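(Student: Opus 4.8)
The plan is to equip each output sheaf with its own equivariant structure by transporting the given isomorphisms $\phi_{\mathcal{F}}\colon \sigma^*\mathcal{F}\xrightarrow{\sim}\beta^*\mathcal{F}$ and $\phi_{\mathcal{G}}\colon \sigma^*\mathcal{G}\xrightarrow{\sim}\beta^*\mathcal{G}$ through the canonical isomorphisms that express how the relevant sheaf operation commutes with pullback, and then to check that the resulting isomorphism satisfies the unit and cocycle conditions of Definition \ref{C* Action}(3). In each case producing the candidate isomorphism is the formal part, and verifying the cocycle identity is the genuine content.

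The tensor product and the pullback are the easiest. For $\mathcal{F}\otimes_X\mathcal{G}$, pullback is monoidal, so $\sigma^*(\mathcal{F}\otimes_X\mathcal{G})\cong\sigma^*\mathcal{F}\otimes\sigma^*\mathcal{G}$ and likewise for $\beta$, and composing these with $\phi_{\mathcal{F}}\otimes\phi_{\mathcal{G}}$ yields the required $\phi$. For $\pi^*\mathcal{G}$, equivariance of $\pi$ gives $\pi\circ\sigma_X=\sigma_Y\circ(\mathrm{id}\times\pi)$ and $\pi\circ\beta_X=\beta_Y\circ(\mathrm{id}\times\pi)$, whence $\sigma_X^*\pi^*\mathcal{G}\cong(\mathrm{id}\times\pi)^*\sigma_Y^*\mathcal{G}$ and $\beta_X^*\pi^*\mathcal{G}\cong(\mathrm{id}\times\pi)^*\beta_Y^*\mathcal{G}$; applying $(\mathrm{id}\times\pi)^*$ to $\phi_{\mathcal{G}}$ then gives the structure map. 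Both use only functoriality of pullback, so here the unit and cocycle conditions are immediate from those for $\phi_{\mathcal{F}},\phi_{\mathcal{G}}$.

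For the internal hom I would use that, since $\mathcal{F}$ is locally free, $\SHom$ commutes with arbitrary pullback: for any morphism $g$ one has $g^*\SHom_X(\mathcal{F},\mathcal{G})\cong\SHom(g^*\mathcal{F},g^*\mathcal{G})$, the underived analogue of Lemma \ref{DualPullback}. Applying this for $g=\sigma$ and $g=\beta$ and then conjugating by $\phi_{\mathcal{F}}^{-1}$ in the contravariant slot and $\phi_{\mathcal{G}}$ in the covariant slot produces $\phi\colon\sigma^*\SHom_X(\mathcal{F},\mathcal{G})\xrightarrow{\sim}\beta^*\SHom_X(\mathcal{F},\mathcal{G})$. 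The pushforward $\pi_*\mathcal{F}$ is the one case requiring base change. The two squares with top maps $\sigma_X,\beta_X$, bottom maps $\sigma_Y,\beta_Y$, and vertical maps $\mathrm{id}\times\pi$ and $\pi$ are Cartesian, and their horizontal maps are flat: $\beta$ is a projection, and $\sigma$ is flat because the automorphism $(t,x)\mapsto(t,\sigma(t,x))$ of $\mathbb{C}^*\times X$ carries $\beta$ to $\sigma$. Flat base change (Lemma \ref{Flat Base Change}) with $i=0$ then gives $\sigma_Y^*\pi_*\mathcal{F}\cong(\mathrm{id}\times\pi)_*\sigma_X^*\mathcal{F}$ and $\beta_Y^*\pi_*\mathcal{F}\cong(\mathrm{id}\times\pi)_*\beta_X^*\mathcal{F}$, so pushing $\phi_{\mathcal{F}}$ forward along $\mathrm{id}\times\pi$ equips $\pi_*\mathcal{F}$ with its structure map, and quasicoherence of $\pi_*\mathcal{F}$ holds as $\pi$ is separated between Noetherian schemes.

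The main obstacle in every case is the verification of the cocycle identity over $\mathbb{C}^*\times\mathbb{C}^*\times X$. I expect this to reduce to a diagram chase in which the compatibility isomorphisms (monoidality, the $\SHom$--pullback comparison, and the base change maps) are shown to be natural with respect to composing $m\times\mathrm{id}_X$ with $\mathrm{id}\times\sigma$, so that the hexagon for the constructed $\phi$ is assembled from the hexagons for $\phi_{\mathcal{F}}$ and $\phi_{\mathcal{G}}$; the unit condition follows similarly by pulling back along $1\times\mathrm{id}_X$. I would carry this out carefully only for the pushforward, where it rests on the coherence of flat base change with composition of the flat maps $\sigma$ and $\beta$, the other three cases being formal consequences of the monoidal and adjunction naturalities.
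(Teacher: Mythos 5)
The paper never proves this lemma: it is stated with a pointer to \cite[Section 3.3]{BlumeThesis} and used as a black box, so there is no in-paper argument to compare against. Your proposal is correct, and it is the standard proof that the cited source carries out: transport the structure isomorphisms $\phi_{\mathcal{F}}$, $\phi_{\mathcal{G}}$ through the canonical compatibilities of $\otimes$, of $\SHom$ (valid in the contravariant slot because a coherent locally free $\mathcal{F}$ has finite rank, so $\SHom_X(\mathcal{F},-)\cong \mathcal{F}^{\vee}\otimes(-)$ commutes with any pullback), and of $\pi^*$, $\pi_*$ with the action and projection maps, then check the unit and cocycle conditions by coherence of these comparison isomorphisms. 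You correctly isolate the only non-formal point, the pushforward: your verification that the $\sigma$-square is Cartesian via the shearing automorphism $(t,x)\mapsto(t,\sigma(t,x))$ — which simultaneously shows $\sigma$ is flat — is exactly what makes flat base change applicable, and quasicoherence of $\pi_*\mathcal{F}$ indeed follows since $\pi$ is separated and automatically quasi-compact between Noetherian schemes; the cocycle identity for $\pi_*$ then rests, as you say, on compatibility of base change maps with horizontal composition of Cartesian squares, which is a standard coherence fact. One caveat on citations: the paper's Lemma \ref{Flat Base Change} is stated only for a flat morphism of \emph{affine} schemes over an affine base, whereas you invoke it for $\sigma_Y,\beta_Y:\mathbb{C}^*\times Y\rightarrow Y$ with $Y$ arbitrary Noetherian; this is harmless because the Stacks Project result the paper is quoting holds in that generality, but you should cite the general flat base change theorem rather than the paper's specialised statement.
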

There is a well known relations between graded rings and $\mathbb{C}^*$-actions on finite type schemes.
\begin{GradingActionRemark}[{See e.g. \cite[Example 3.4]{HashimotoEquivariant}}] \label{GradingActionRemark}
If $R$ is a commutative $\mathbb{C}$-algebra of finite type then:
\begin{enumerate}
\item A $\mathbb{Z}$-grading on $R$ is equivalent to a $\mathbb{C}^*$-action on the corresponding affine scheme $\Spec(R)$. 
\item If $R$ is graded the category of graded $R$-modules is equivalent to the category of $\mathbb{C}^*$-equivariant coherent sheaves on $\Spec(R)$.
\end{enumerate}
\end{GradingActionRemark}

 A $\mathbb{C}^*$-action on an affine scheme $\Spec(R)$ of finite type over $\mathbb{C}$ is defined by a homomorphism $\mathbb{C}^* \rightarrow \Aut_{\mathbb{C}}(R)$  induced from a $\mathbb{C}$-algebra morphism $R \rightarrow R \otimes_{\mathbb{C}} \mathbb{C}[t, t^{-1}]$. We also recall the definition of a $\mathbb{C}^*$-action on a scheme which is not of finite type but which occurs as the completion of a finite type scheme.
\begin{CompleteActionDefinition}When $\Spec(R)$ is not of finite type but $(R,\mathfrak{m})$ is a Noetherian complete local $\mathbb{C}$-algebra with maximal ideal $\frak{m}$ then a $\mathbb{C}^*$-action on $\Spec(R)$ is defined by a homomorphism $\mathbb{C}^* \rightarrow \Aut_{\mathbb{C}}(R)$ induced from a $\mathbb{C}$-algebra morphism  $R \rightarrow R \widehat{\otimes}_{\mathbb{C}}\mathbb{C}[t, t^{-1}]$, where  $R \widehat{\otimes}_{\mathbb{C}}\mathbb{C}[t, t^{-1}]$ is the completion of  $R \otimes_{\mathbb{C}} \mathbb{C}[t, t^{-1}]$ in the maximal ideal $\frak{m}(R \otimes_{\mathbb{C}}\mathbb{C}[t,t^{-1}])$ \cite[A1]{NamikawaFlopsandPoisson}. We define $\Spec(R) \hat{\times} \mathbb{C}^*:= \Spec(R \widehat{\otimes}_{\mathbb{C}}\mathbb{C}[t, t^{-1}])$, and then this definition of $\mathbb{C}^*$-action matches that of Definition \ref{C* Action} but with completed fibre products.

Similarly, if $X \rightarrow \Spec(R)$ is a projective morphism such that $\Spec(R)$ is complete local then a $\mathbb{C}^*$-action on $X$ is defined to as in Definition \ref{C* Action} but replacing all fibre produces with completed fibre products $X \hat{\times}\mathbb{C}^*$, where the completed fibre product $X \hat{\times} \mathbb{C}^*$ is defined as the fibre product of $X\times \mathbb{C}^* \rightarrow \Spec(R) \times \mathbb{C}^*$  and $\Spec(R) \hat{\times} \mathbb{C}^*  \rightarrow \Spec(R) \times \mathbb{C}^*$.
\end{CompleteActionDefinition}

\begin{GoodAction} A $\mathbb{C}^*$-action on an affine scheme $\Spec(R)$ is \emph{good} if there is a unique $\mathbb{C}^*$-fixed closed point corresponding to a maximal ideal $\mathfrak{m}$, such that $\mathbb{C}^*$ only acts by strictly positive weights on $\mathfrak{m}$. If $\Spec(R)$ has a good $\mathbb{C}^*$-action, then the closure of any non-empty $\mathbb{C}^*$ orbit on $\Spec(R)$ must contain the unique $\mathbb{C}^*$-fixed closed point.
\end{GoodAction}

\subsection{$\mathbb{C}^*$-Equivariant tilting}

Suppose that $\pi:X \rightarrow \Spec(R)$ is a $\mathbb{C}^*$-equivariant projective morphism of finite type schemes with $\mathbb{C}^*$-actions, $\mathcal{E} \in \Coh^{\mathbb{C}^*}X$, and $\mathcal{F} \in \QCoh^{\mathbb{C}^*}X$. Then $R$ is a graded ring and $\Hom^{\mathbb{C}^*}_X(\E,\mathcal{F})$ equals the degree $0$ part of $\Hom_X(\E,\mathcal{F})=\pi_* \CalHom_X(\E,\mathcal{F})$ considered as a graded $R$-module. We recall that the characters of $\mathbb{C}^*$ are given by $ a \mapsto a^i$ for $i \in \mathbb{Z}$ and tensoring with these characters, which we note is flat, induces twisted $\mathbb{C}^*$-equivariant structures $\E(i)$ on $\E$. If we let $(i)_R$ denote the grading shift for $\Spec(R)$ then $\E(i)= \E \otimes_X \pi^* R(i)_R$ and $\pi_* (\E(i)) \cong (\pi_i \E)(i)_R$ by the projection formula. In particular, $\Hom^{\mathbb{C}^*}_X(\E,\mathcal{F}(i))$ is the degree $i$ part of $\Hom_{X}(\E,\mathcal{F})$ considered as a graded $R$-module.  By the lemma immediately below we can consider $A=\End_X(\E)$ as a graded algebra,  and this allows us to consider interactions between tilting bundles and $\mathbb{C}^*$-actions.

\begin{GradedTiltingBundles}
Let $\pi:X \rightarrow \Spec(R)$ be a $\mathbb{C}^*$-equivariant projective morphism of finite type schemes with $\mathbb{C}^*$-actions, and let $\E$ be a $\mathbb{C}^*$-equivariant locally free coherent sheaf on $X$. Then the ring $A=\End_X(\E)$ has a natural grading making it a graded $R$-module.  In particular, $A=\Hom_X(\E,\E)=\bigoplus_{i \in \mathbb{Z}} \Hom^{\mathbb{C}^*}_X(\E(-i),\E)$.
\end{GradedTiltingBundles}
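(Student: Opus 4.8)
The plan is to construct the grading on $A=\End_X(\E)$ directly from the $\mathbb{C}^*$-equivariant structure on $\E$, and then translate it into a grading of $R$-modules via the dictionary of Remark \ref{GradingActionRemark}. First I would observe that, since $\E$ is a $\mathbb{C}^*$-equivariant locally free coherent sheaf, Lemma \ref{Action Lemma}(1) applied with $\mathcal{F}=\mathcal{G}=\E$ gives $\SHom_X(\E,\E)\in\QCoh^{\mathbb{C}^*}X$, and as $\E$ is locally free coherent this is in fact an object of $\Coh^{\mathbb{C}^*}X$. Pushing forward along the equivariant projective morphism $\pi$ and invoking Lemma \ref{Action Lemma}(3), the sheaf $\pi_*\SHom_X(\E,\E)=\Hom_X(\E,\E)=A$ lies in $\QCoh^{\mathbb{C}^*}\Spec(R)$; since $\pi$ is projective and $\SHom_X(\E,\E)$ is coherent, this pushforward is coherent, so $A\in\Coh^{\mathbb{C}^*}\Spec(R)$.

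Next I would transport this equivariant sheaf to a grading. As $R$ is a finite type graded $\mathbb{C}$-algebra (Remark \ref{GradingActionRemark}(1)), Remark \ref{GradingActionRemark}(2) supplies an equivalence between $\Coh^{\mathbb{C}^*}\Spec(R)$ and finitely generated graded $R$-modules, so $A$ acquires a natural decomposition $A=\bigoplus_{i\in\mathbb{Z}}A_i$ making it a graded $R$-module. To upgrade this module grading to a grading of the ring $A$, I would note that the multiplication on $A$ is the pushforward of the composition pairing $\SHom_X(\E,\E)\otimes_X\SHom_X(\E,\E)\to\SHom_X(\E,\E)$. The tensor product is equivariant by Lemma \ref{Action Lemma}(2), and composition of equivariant homomorphisms is equivariant, so this pairing is a morphism in $\QCoh^{\mathbb{C}^*}X$; applying $\pi_*$ and Lemma \ref{Action Lemma}(3), the induced multiplication $A\otimes_R A\to A$ is a morphism of equivariant sheaves over $\Spec(R)$, hence a degree-preserving morphism of graded $R$-modules. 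This forces $A_iA_j\subseteq A_{i+j}$ and makes $A$ a graded ring.

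Finally I would identify the graded pieces. By the discussion immediately preceding the statement, the degree $i$ part of $\Hom_X(\E,\E)$ regarded as a graded $R$-module is $\Hom^{\mathbb{C}^*}_X(\E,\E(i))$. Tensoring with the character $a\mapsto a^{-i}$ is flat and invertible, hence an auto-equivalence of $\QCoh^{\mathbb{C}^*}X$ carrying $\E$ to $\E(-i)$ and $\E(i)$ to $\E$; it therefore induces an isomorphism $\Hom^{\mathbb{C}^*}_X(\E,\E(i))\cong\Hom^{\mathbb{C}^*}_X(\E(-i),\E)$. Summing over $i$ yields the asserted decomposition $A=\bigoplus_{i\in\mathbb{Z}}\Hom^{\mathbb{C}^*}_X(\E(-i),\E)$.

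The routine bookkeeping is light here, as the essential inputs are already packaged in Lemma \ref{Action Lemma} and Remark \ref{GradingActionRemark}. The one point that genuinely requires care is verifying that the equivariant structure yields a grading of the \emph{ring}, that is the multiplicativity $A_iA_j\subseteq A_{i+j}$, rather than merely a grading of the underlying $R$-module; I expect this to be the main (modest) obstacle, and it is handled by tracking the equivariance of the composition pairing as above, with the remaining identification of graded pieces following formally from the character-twist auto-equivalence.
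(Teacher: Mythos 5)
Your proof is correct, but it establishes the key point -- that the grading is multiplicative, i.e.\ a grading of the \emph{ring} -- by a different mechanism than the paper. The paper's proof never mentions the composition pairing of sheaves: it builds an explicit coaction $\phi:A \rightarrow A \otimes_{\mathbb{C}} \mathbb{C}[t,t^{-1}] = \Hom_{X \times \mathbb{C}^*}(\beta^*\E,\beta^*\E)$ from the unit map $\E \rightarrow \sigma_*\sigma^*\E$ and the equivariance isomorphism $\beta^*\E \cong \sigma^*\E$, observes that $\phi$ is a ring homomorphism defining an algebraic $\mathbb{C}^*$-action on $A$, and takes the eigenspace decomposition; multiplicativity $A_iA_j \subseteq A_{i+j}$ is then immediate from $\phi$ being a ring map. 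You instead keep everything at the level of the equivariant sheaf $\SHom_X(\E,\E)$: the composition pairing is a morphism in $\QCoh^{\mathbb{C}^*}X$ (a naturality check, which is the real content of your ``composition of equivariant homomorphisms is equivariant''), and pushing forward makes multiplication a degree-zero map of graded $R$-modules. The two routes share the second half (Lemma on equivariant sheaves parts (1),(3) plus the graded-modules/equivariant-sheaves equivalence give the graded $R$-module structure), but they differ in what they make transparent: the paper's coaction gives a concrete rational $\mathbb{C}^*$-action on $A$ by ring automorphisms, at the cost of leaving implicit that this eigenspace grading agrees with the $R$-module grading obtained separately; your version derives both gradings from the single equivariant structure on $\SHom_X(\E,\E)$, so their compatibility is automatic, and you also explicitly justify the identification $A_i \cong \Hom^{\mathbb{C}^*}_X(\E(-i),\E)$ via the character-twist auto-equivalence, which the paper's proof delegates entirely to the discussion preceding the lemma.
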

\begin{proof}
The action map $\sigma:X \times \mathbb{C}^* \rightarrow X$ defines a map of rings $\phi:A=\Hom_X(\E,\E) \rightarrow A \otimes_{\mathbb{C}} \mathbb{C}[t,t^{-1}]=\Hom_{X \times \mathbb{C}^*}(\beta^*\E,\beta^*\E )$ induced by the unit map $\E \rightarrow \sigma_*\sigma^*\E$ and isomorphism $\beta^*\E \cong \sigma^*\E$ defining the $\mathbb{C}^*$-equivariant structure on $\E$. This defines an algebraic action of $\mathbb{C}^*$ on $A$ by $q.a$ equalling the evaluation of $\phi(a)$ at $t=q$ for $a \in A$. Hence $A$ decomposes into eigenspaces for this action which define $A$ as a $\mathbb{Z}$-graded ring. As $\E$ is $\mathbb{C}^*$-equivariant it follows from Lemma \ref{Action Lemma}(1) that $\CalEnd_{X}(\E)$ is $\mathbb{C}^*$-equivariant, and then it follows by Lemma \ref{Action Lemma}(3) and Remark \ref{GradingActionRemark}(2) that $\End_{X}(\E)$ is a graded $R$-module.
\end{proof}

Suppose now that $\pi:X \rightarrow \Spec(R)$ is a $\mathbb{C}^*$-equivariant projective morphism of finite type schemes with $\mathbb{C}^*$-actions and we have a $\mathbb{C}^*$-equivariant locally free coherent sheaf $T$ such that $\Ext^j_{X}(T,T)=0$ for $j \neq 0$. We wish to investigate when properties of the derived category of $\mathbb{C}^*$-equivariant quasicoherent sheaves can be used to study generation for the derived category of quasicoherent sheaves.  The functor $\Hom_{X}(T,-)$ carries a $\mathbb{C}^*$-equivariant structure on a sheaf $\E$ to a $\mathbb{C}^*$-equivariant structure on the $A$-module $\Hom_X(T,\E)$ by Lemma \ref{Action Lemma}, and this can be interpreted as the grading on the $A$-module $\Hom_X(T,\E)$ given by $\Hom_X(T,\E)= \bigoplus_{i \in \mathbb{Z}} \Hom^{\mathbb{C}^*}_X(T,\E(i))$. Then the  $\mathbb{C}^*$-equivariant bundle $T$ defines adjoint functors \[T^{\mathbb{C^*}}_*:= \bigoplus_{i \in \mathbb{Z}}\mathbf{R}\Hom_{X}(T(-i),-)^{\mathbb{C}^*}:D_{\mathbb{C}^*}(X) \rightarrow D_{\mathbb{C}^*}(A)\] and
\[ T^*_{\mathbb{C}^*}:=T \otimes ^{\mathbf{L}}_{A}(-):D_{\mathbb{C}^*}(A) \rightarrow D_{\mathbb{C}^*}(X)
\]
where $D_{\mathbb{C}^*}(A)$ is the unbounded derived category of $\mathbb{C}^*$-equivariant right $A$-modules, which correspond to graded right $A$-modules. We can also relate these functors back to the corresponding functors without the $\mathbb{C}^*$-action. Consider the forgetful functors $F_A:D_{\mathbb{C}^*}(A) \rightarrow D(A)$ and $F_X: D_{\mathbb{C}^*}(X) \rightarrow D(X)$ and the following digram commutes 
\begin{align*}
\begin{tikzpicture} [bend angle=0]
\node (C1) at (0,0)  {$D(X)$};
\node (C2) at (3,0)  {$D(A)$};
\node (C3) at (0,1.6)  {$D_{\mathbb{C}^*}(X)$};
\node (C4) at (3,1.6)  {$D_{\mathbb{C}^*}(A)$};
\node (C5) at (6,0)  {$D(X)$};
\node (C6) at (6,1.6)  {$D_{\mathbb{C}^*}(X)$};
\draw [->] (C1) to node[above]  {\scriptsize{$T_*$}} (C2);
\draw [->] (C4) to node[left]  {\scriptsize{$F_A$}} (C2);
\draw [->] (C3) to node[left]  {\scriptsize{$F_X$}} (C1);
\draw [->] (C3) to node[above]  {\scriptsize{$T^{\mathbb{C}^*}_*$}} (C4);
\draw [->] (C6) to node[left]  {\scriptsize{$F_X$}} (C5);
\draw [->] (C4) to node[above]  {\scriptsize{$T_{\mathbb{C}^*}^*$}} (C6);
\draw [->] (C2) to node[above]  {\scriptsize{$T^*$}} (C5);
\end{tikzpicture}
\end{align*}
where $T_*= \mathbf{R}\Hom_{X}(T,-):D(X) \rightarrow D(A)$ and $T^*:= (-) \otimes_{A}^{\mathbf{L}} T:D(A) \rightarrow D(X)$.
We note the following consequence of this discussion, which we will require later.
\begin{ShiftedGradingRemark} \label{ShiftedGradingRemark}
For any $\mathbb{C}^*$-equivariant quasicoherent sheaf $\E$ on $X$ the counit morphism $T^{*}_{\mathbb{C}^*} T_*^{\mathbb{C^*}}(\E) \rightarrow \E$ in $D_{\mathbb{C}^*}(X)$ restricts,  after applying the forgetful functor, to coincide with the counit morphism $T^*T_* (\E) \rightarrow \E$ in $D(X)$.
\end{ShiftedGradingRemark}

We also recall the following lemma that allows us to assume the existence of $\mathbb{C}^*$-equivariant ample line bundles on normal quasi-projective Noetherian schemes with $\mathbb{C}^*$-actions which is a fact we will use later.
\begin{Equivariant Ample Line Bundle}[{\cite[Theorem 1.6]{SumihiroEquivariantCompletetionII}}] \label{Equivariant Ample Line Bundle} If $X$ is a normal Noetherian scheme with a $\mathbb{C}^*$-action then for any line bundle $\mathcal{L}$ on $X$ there exists some positive integer $n$ such that $\mathcal{L}^{\otimes n}$ can be given a $\mathbb{C}^*$-equivariant structure.
\end{Equivariant Ample Line Bundle}
\begin{ActionGenerationRemark}
If $X$ is quasi-projective then it has an ample line bundle and this theorem allows us to assume the existence of a $\mathbb{C}^*$-equivariant ample line bundle on $X$. It is noted in Section \ref{Derived Categories} that  if a scheme $X$ has an ample line bundle $\mathcal{L}$ then the smallest localising subcategory of $D(X)$ containing $\{ \mathcal{L}^{\otimes t} : t \in \mathbb{Z} \, \}$ is $D(X)$ itself, hence for a normal quasi-projective variety $X$ with $\mathbb{C}^*$-action the smallest localising subcategory of $D(X)$ containing all coherent sheaves that can be given a $\mathbb{C}^*$-equivariant structure is $D(X)$ itself.
\end{ActionGenerationRemark}

\subsection{$\mathbb{C}^*$-Equivariant deformations} \label{GradedDeformationsandSchemes} We recall the definitions of graded deformations of algebras and $\mathbb{C}^*$-equivariant deformations of schemes. 

\begin{Graded Scheme Deformations}
A $\mathbb{C}^*$-\emph{equivariant deformation} of a scheme with $\mathbb{C}^*$-action is defined to be a deformation of a scheme as in Section \ref{Deformations of Schemes} such that there is a $\mathbb{C}^*$-action on all schemes, all morphisms are $\mathbb{C}^*$-equivariant, and the chosen closed point is invariant under the $\mathbb{C}^*$-action.
\end{Graded Scheme Deformations}

\begin{Graded Algebra Deformations}
We will also be interested in \emph{graded algebra deformations}. These are algebra deformations as in Section \ref{Deformations of Algebras}, but all algebras are required to be graded, all morphisms required to be graded, and the maximal ideal is required to be a graded maximal ideal.
\end{Graded Algebra Deformations}

A particular class of graded deformations are those over the Noetherian graded complete local ring $\mathbb{C}[t]$ where $t$ has grade 1. Taking the fibre of such an algebra when $t =1$ allows the construction of filtered algebras whose associated graded algebras are isomorphic to the algebra being deformed.

\begin{ReesRingGradedDeformations}[{\cite[Lemma 1.3 and 1.4]{BrG}}] \label{ReesRingGradedDeformations}
If $(\mathbb{C}[t] \rightarrow A, u_t:A \rightarrow A_0)$ is a graded deformation of a positively graded ring $A_0$ then $A/(t$-$1)A$ is a filtered algebra with associated graded algebra isomorphic to $A$. In this case the algebra $A$ is the Rees ring of the filtered algebra $A/(t$-$1)A$ and the associated graded algebra of $A/(t$-$1)A$ is isomorphic to $A/tA \cong A_0$.
\end{ReesRingGradedDeformations}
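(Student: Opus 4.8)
The plan is to follow the Braverman–Gaitsgory argument, extracting from the deformation hypothesis exactly two structural facts about $A$ and then performing an explicit homogeneous-component computation. Since $(\mathbb{C}[t] \rightarrow A, u_t)$ is a graded deformation, flatness of $A$ over the PID $\mathbb{C}[t]$ forces $t$ (which is central and homogeneous of degree $1$) to be a non-zero-divisor on $A$, while the identification coming from $u_t$ gives a graded isomorphism $A/tA \cong A_0$; here $A$ is positively graded, as holds in the deformation setting. Writing $\phi \colon A \rightarrow B := A/(t-1)A$ for the quotient and $A_{\le n} := \bigoplus_{i \le n} A_i$, I would define the candidate filtration by $F_n B := \phi(A_{\le n})$. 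Because $\phi(t) = 1$ in $B$, every $\phi(a_i)$ with $a_i \in A_i$ equals $\phi(t^{\,n-i} a_i)$ with $t^{\,n-i} a_i \in A_n$, so in fact $F_n B = \phi(A_n)$; the inclusions $F_n B \subseteq F_{n+1} B$, exhaustiveness, and $F_n B \cdot F_m B \subseteq F_{n+m} B$ are then immediate from the grading on $A$, exhibiting $B$ as a filtered algebra.

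The heart of the argument, and the step I expect to be the main obstacle, is identifying the associated graded. Since $F_{n-1} B = \phi(A_{n-1}) = \phi(t A_{n-1})$ sits inside $F_n B = \phi(A_n)$, there is a surjection $\psi \colon A_n \twoheadrightarrow \mathrm{gr}_n B := F_n B / F_{n-1} B$, and I would show $\ker \psi = t A_{n-1}$. The containment $t A_{n-1} \subseteq \ker \psi$ is clear. For the reverse, an element $a \in A_n$ with $\psi(a) = 0$ satisfies $a - tb \in (t-1)A$ for some $b \in A_{n-1}$, say $a - tb = (t-1)d$; expanding $d = \sum_j d_j$ into homogeneous pieces and matching degrees forces $t d_{k-1} = d_k$ for every $k \neq n$, whence $d_0 = d_1 = \cdots = d_{n-1} = 0$ and $d_k = t^{\,k-n} d_n$ for $k \ge n$. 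Finiteness of the expansion together with $t$ being a non-zero-divisor then forces $d_n = 0$, giving $a = tb \in t A_{n-1}$. This yields $\mathrm{gr}_n B \cong A_n / t A_{n-1} = (A/tA)_n \cong (A_0)_n$, and since multiplication is respected one obtains $\mathrm{gr}(B) \cong A/tA \cong A_0$ as graded algebras.

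Finally, for the Rees ring statement I would define the graded map
\[
\Phi \colon A \longrightarrow \mathrm{Rees}(B) = \bigoplus_{n} (F_n B)\, t^n \subseteq B[t], \qquad A_n \ni a_n \longmapsto \phi(a_n)\, t^n .
\]
Surjectivity is immediate since $F_n B = \phi(A_n)$. Injectivity reduces to showing each $\phi|_{A_n}$ is injective, which is the same degree-matching computation as above (now with $b = 0$): if $a_n \in A_n \cap (t-1)A$, then writing $a_n = (t-1)d$ and matching homogeneous components forces $d = 0$, hence $a_n = 0$. Multiplicativity of $\Phi$ follows from that of $\phi$, and $\Phi$ visibly sends $t \mapsto t$, so $\Phi$ is a graded isomorphism identifying $A$ with the Rees ring of $B$. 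The only delicate point throughout is the regularity of $t$ supplied by flatness, without which the top homogeneous component $d_n$ could fail to vanish and both the associated-graded identification and the injectivity of $\Phi$ would break down.
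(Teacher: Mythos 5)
The paper offers no proof of this lemma at all --- it is quoted directly from \cite[Lemmas 1.3 and 1.4]{BrG} --- so the only comparison available is with the cited source, and your argument is precisely the standard one behind that citation. Your steps are correct: flatness over the PID $\mathbb{C}[t]$ makes the central degree-one element $t$ a non-zero-divisor; the filtration $F_nB = \phi(A_{\le n}) = \phi(A_n)$ on $B = A/(t-1)A$ is well defined, exhaustive and multiplicative; and the degree-matching computation (the relations $d_k = t\,d_{k-1}$ for $k \neq n$, killed from below by the bounded grading and from above by finiteness of the homogeneous expansion together with regularity of $t$) correctly identifies $\bigl\{a \in A_n : \phi(a) \in F_{n-1}B\bigr\}$ with $tA_{n-1}$. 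That single computation simultaneously yields $\mathrm{gr}_n B \cong A_n/tA_{n-1} = (A_0)_n$ and the injectivity of $\Phi \colon A \to \bigoplus_n F_nB\,t^n$, so both assertions of the lemma follow.

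One point deserves more than the parenthesis you give it. Positivity of the grading on $A$ is not a consequence of this paper's definition of graded deformation (graded, flat over $\mathbb{C}[t]$, $A/tA \cong A_0$): the flat graded $\mathbb{C}[t]$-algebra $A = \mathbb{C}[t] \times \mathbb{C}[t,t^{-1}]$ has $A/tA \cong \mathbb{C}$ positively graded, yet $A$ is nonzero in every negative degree and $A/(t-1)A$ is two-dimensional, so no exhaustive filtration with $F_{-1} = 0$ on it can have one-dimensional associated graded algebra; the lemma is simply false for it. So positivity of $A$ is a genuine hypothesis, and your remark that it ``holds in the deformation setting'' is an appeal to convention rather than a deduction. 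It is, however, the intended convention: in Braverman--Gaitsgory all graded algebras are non-negatively graded by definition, and that is the setting in which the lemma is invoked here. With that reading made explicit, your proof is complete and correct.
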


\begin{ReesFibresRemark} \label{ReesFibresRemark}Suppose that $(D,d)$ is a graded complete local ring with graded maximal ideal in strictly positive degree and $(D \rightarrow A,u_d:A\rightarrow A_0)$ is a deformation of a positively graded algebra $A_0$. Then there is a graded morphism $D\rightarrow \mathbb{C}[t]$ (with $t$ in degree 1) sending homogeneous generators of $D$ to powers of $t$ and hence mapping the maximal ideal $d$ to the maximal ideal $(t)$. Base changing by this map allows the application of the above result to any fibre of $A$ over $\Spec(D)$.
\end{ReesFibresRemark}

\subsection{$\mathbb{C}^*$-Equivariant preliminary lemmas}
We must rephrase several lemmas that we have used previously in the $\mathbb{C}^*$-equivariant setting, and we use the following notation for the two lemmas below. Let $f:X \rightarrow Y=\Spec(R)$ be a $\mathbb{C}^*$-equivariant morphism of  Noetherian schemes with $\mathbb{C}^*$-actions, suppose that the $\mathbb{C}^*$-action on $Y$ is good with unique fixed closed point $y$, and let $j_y:f^{-1}(y) \rightarrow X$ denote the inclusion of this fibre. 

We first recall the $\mathbb{C}^*$-equivariant version of Nakayama's lemma. 
\begin{GradedNakayama} \label{GradedNakayama} If $\E$ is a $\mathbb{C}^*$-equivariant coherent sheaf on $X$ such that $j^*_y \E \cong 0$ then $\E \cong 0$.
\end{GradedNakayama}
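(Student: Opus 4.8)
The plan is to argue on supports, reducing the vanishing of $\E$ to the geometric fact, guaranteed by the good action, that the fixed point $y$ lies in every nonempty closed invariant subset of $Y$. Recall that the hypothesis that the $\mathbb{C}^*$-action on $Y=\Spec(R)$ is good means that $R$ is positively graded with the fixed point $y$ corresponding to the graded maximal ideal $\mathfrak{m}=R_{>0}$, and, as recorded in the definition of a good action, the closure of every nonempty $\mathbb{C}^*$-orbit in $Y$ contains $y$. Consequently any nonempty closed $\mathbb{C}^*$-invariant subset $W\subseteq Y$ contains $y$: choosing any point $w\in W$, the orbit $\mathbb{C}^*\cdot w$ lies in $W$, so its closure does too, and that closure contains $y$.

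First I would show that $Z:=\operatorname{Supp}(\E)$ is $\mathbb{C}^*$-invariant. This follows from the equivariant structure isomorphism $\sigma^*\E\cong\beta^*\E$ of Definition \ref{C* Action}(3): taking supports gives $\sigma^{-1}(Z)=\beta^{-1}(Z)$ inside $\mathbb{C}^*\times X$, which is exactly the statement that $Z$ is invariant under the action. Next, assuming $\E\not\cong 0$ so that $Z$ is nonempty, I would pass to $Y$ via $f$. Here the argument uses that $f$ is proper (which holds in all our applications, where $f$ is projective): properness makes $f$ a closed map, so $f(Z)$ is closed in $Y$, and $\mathbb{C}^*$-equivariance of $f$ together with invariance of $Z$ makes $f(Z)$ invariant. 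Thus $f(Z)$ is a nonempty closed invariant subset of $Y$, and by the consequence of the good-action hypothesis noted above it contains $y$.

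Finally, since $y\in f(Z)$ there is a point $x'\in Z\cap f^{-1}(y)$. At this point $\E_{x'}\neq 0$ is a finitely generated $\mathcal{O}_{X,x'}$-module, and the stalk of $j_y^*\E$ at $x'$ is $\E_{x'}\otimes_{\mathcal{O}_{X,x'}}\mathcal{O}_{f^{-1}(y),x'}=\E_{x'}/I\E_{x'}$, where $I\subseteq\mathfrak{m}_{x'}$ is the ideal generated by $f^{\#}\mathfrak{m}_y$ (which lies in $\mathfrak{m}_{x'}$ because $f(x')=y$); ordinary Nakayama then forces this quotient to be nonzero, so $j_y^*\E\not\cong 0$, contradicting the hypothesis. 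Hence $\E\cong 0$. The main obstacle, and the only place the geometry really enters, is the second paragraph: one needs $f$ to be a closed map so that $y$ genuinely lies in $f(Z)$ rather than merely in its closure. Without properness the statement fails (for instance the inclusion of $\mathbb{G}_m$ into $\mathbb{A}^1$ has empty fibre over the fixed point yet nonzero structure sheaf), so establishing closedness of $f(Z)$ is the crux.
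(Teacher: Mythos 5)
Your argument is correct under the properness hypothesis you add, and it follows the same basic strategy as the paper's own proof: the support of $\E$ is closed and $\mathbb{C}^*$-invariant, the good action on $Y$ forces any nonempty such subset to have a point lying over $y$, and Nakayama at that point contradicts $j_y^*\E \cong 0$. The difference is that you make explicit a step the paper elides. The paper's proof passes from ``the support is closed and invariant and the action on $\Spec(R)$ is good'' directly to the assertion that $\E$ must be supported over $y$, and that inference requires the image of the support in $Y$ to be closed --- exactly the closedness of $f$ you isolate as the crux. Your counterexample is valid: for the open immersion $\mathbb{G}_m \hookrightarrow \mathbb{A}^1$ (an equivariant morphism of Noetherian schemes, with good action on $\mathbb{A}^1$) the fibre over the fixed point is empty, so $j_y^*\mathcal{O}_{\mathbb{G}_m} \cong 0$ while $\mathcal{O}_{\mathbb{G}_m} \not\cong 0$. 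So the lemma as literally stated is false, and your added hypothesis is a genuine repair of the statement rather than pedantry; the paper's proof is only valid once such a hypothesis is supplied.

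One correction, though, to your parenthetical claim that properness ``holds in all our applications, where $f$ is projective''. That covers the direct uses in Proposition \ref{GradedTiltingLift}, where $f$ is the identity of $\Spec(R)$ or the projective morphism $\pi$, and the use of Lemma \ref{GradedVitalLemma1} for $\pi$. But Lemma \ref{GradedNakayama} also enters through Lemma \ref{GradedVitalLemma1} as applied in Corollary \ref{GradedBaseChangeCorollary}, and there the relevant morphism is the flat \emph{affine} morphism $\alpha:\Spec(R)\rightarrow\Spec(D)$, which is essentially never proper in this setting: an affine morphism is proper only if it is finite, and the fibres $\Spec(R_z)$ of $\alpha$ are positive-dimensional. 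Your own example shows that affineness alone cannot suffice. What rescues this application is that in Theorem \ref{Graded Main} the action on the source $\Spec(R)$ is also assumed good: a $\mathbb{C}^*$-equivariant coherent sheaf on $\Spec(R)$ is then a finitely generated graded $R$-module $M$ over the positively graded ring $R$, the graded map $D \rightarrow R$ carries $d$ into $\mathfrak{m}_R = R_{>0}$, so $M \otimes_D D/d = 0$ gives $M = \mathfrak{m}_R M$ and hence $M = 0$ by graded Nakayama, with no properness needed. The clean statement covering every use in the paper is therefore: the lemma holds if either $f$ is a closed map (e.g.\ proper), or $X$ is affine with a good $\mathbb{C}^*$-action.
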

\begin{proof}
For any closed point $z \in Y$ let $j_z$ denote the inclusion of this fibre in $X$. If $\E$ is a coherent sheaf on $X$ then by Nakayama's lemma if $j_z^* \E=0$ for all closed points $z \in Y$ then $\E=0$. We also know that as $\E$ is a coherent sheaf the support of $\E$ is closed \cite[Lemma 17.9.6 (Tag 01BA)]{StacksProject}, as it is $\mathbb{C}^*$-equivariant we know that its set theoretic support is closed under the $\mathbb{C}^*$ action \cite[Lemma 8.1 6]{HashimotoEquivariant} hence as the $\mathbb{C}^*$-action on $\Spec(R)$ is good $\E$ must be supported over the unique $\mathbb{C}^*$-fixed closed point $y$. Therefore if $j_y^* \E \cong 0 $ then $j_z^*\E \cong 0$ for all closed points $z \in Y$, hence $\E \cong 0$.
\end{proof}

We then note the following $\mathbb{C}^*$-equivariant version of Lemma \ref{VitalLemma1}.

\begin{GradedVitalLemma1} \label{GradedVitalLemma1} Suppose that $\mathcal{E} \in D^{-}(\Coh \, X)$ has $\mathbb{C}^*$-equivariant cohomology sheaves and $\mathbf{L}j_y^*\mathcal{E}$ is a $\mathbb{C}^*$-equivariant coherent sheaf. Then $\mathcal{E}$ is a $\mathbb{C}^*$-equivariant coherent sheaf on $X$ and is flat over $\Spec(R)$.
\end{GradedVitalLemma1}
\begin{proof}
We again copy the proof of \cite[Lemma 4.3]{BridgelandEquivalences} but in the $\mathbb{C}^*$-equivariant setting.
We consider the hypercohomology spectral sequence
\begin{equation*}
E_2^{p,q}=\mathbf{L}_{-p}j_y^*(\mathcal{H}^{q}(\mathcal{E})) \Rightarrow \mathbf{L}_{-(p+q)}j_y^* \mathcal{E}
\end{equation*}
for the unique $\mathbb{C}^*$-fixed point $y \in Y$.
As we assume that $\mathbf{L}j^*_y \mathcal{E}$ is a sheaf the right hand side is zero unless $p+q=0$.
As $\mathcal{E} \in D^{-}(\Coh \, X)$ there exists a maximal $q$ such that $\mathcal{H}^{q}(\mathcal{E})$ is non zero, $q_0$ say. Then $\mathcal{H}^{q_0}(\mathcal{E})$ is a non-zero $\mathbb{C}^*$-equivariant coherent sheaf on $X$, so by Lemma \ref{GradedNakayama} $j^*_y \mathcal{H}^{q_0}(\mathcal{E}) \neq 0$ and so $E_2^{0,q_0}$ survives in the spectral sequence. Hence $q_0=0$. We now consider $\mathcal{H}^0(\mathcal{E})$, and note that this must be flat over $Y$ to prevent $E_2^{-1,0}$ surviving in the spectral sequence.

If $\mathcal{H}^{q}(\mathcal{E} )\neq 0$ for some $q<0$ we can choose a maximal such value, $q_1$ then $\mathcal{H}^{q}(\E)$ is a non-zero $\mathbb{C}^*$-equivariant sheaf,  so $j^*_y \mathcal{H}^{q_1}(\mathcal{E})$ is nonzero by Lemma \ref{GradedNakayama}, and hence $E_2^{0,q_1}$ survives the spectral sequence. This cannot occur, hence $\mathcal{H}^i(\mathcal{E})=0$ for $i<0$.
\end{proof}

We set out the following notation for $\mathbb{C}^*$-equivariant base change by closed points. Let $\rho:X \rightarrow \Spec(D)$ be a flat $\mathbb{C}^*$-equivariant morphism of Noetherian schemes with $\mathbb{C}^*$-action which factors through a $\mathbb{C}^*$-equivariant projective morphism of Noetherian schemes $\pi:X \rightarrow \Spec(R)$ and flat affine morphism $\alpha:\Spec(R) \rightarrow \Spec(D)$.  Assume that $\Spec(D)$ has a good $\mathbb{C}^*$-action with unique $\mathbb{C}^*$-fixed closed point $d\in \Spec(D)$. Then there is a $\mathbb{C}^*$-equivariant closed immersion $i_d:\Spec(\mathbb{C}) \rightarrow \Spec(D)$ and we recall the diagram \eqref{ClosedBaseChange} noting that the morphisms are all $\mathbb{C}^*$-equivariant.

\begin{GradedBaseChangeCorollary}  \label{GradedBaseChangeCorollary} Suppose that $\E \in D^b(\Coh \, X)$ has $\mathbb{C}^*$-equivariant cohomology sheaves and $\mathbf{R}p_* \mathbf{L}j_d^* \E$ is a $\mathbb{C}^*$-equivariant sheaf on $\Spec(R_d)$. Then $\mathbf{R}\pi_{*} \E$ is a $\mathbb{C}^*$-equivariant coherent sheaf on $\Spec(R)$ flat over $\Spec(D)$.
\end{GradedBaseChangeCorollary}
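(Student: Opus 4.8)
The plan is to follow the proof of the ungraded Corollary \ref{BaseChangeCorollary} step for step, replacing Lemma \ref{VitalLemma1} by its $\mathbb{C}^*$-equivariant counterpart Lemma \ref{GradedVitalLemma1} and carrying the $\mathbb{C}^*$-equivariant structures through every functor in the computation. The key simplification afforded by the good $\mathbb{C}^*$-action is that Lemma \ref{GradedVitalLemma1} only demands control over the single fibre above the fixed point $d$, whereas Lemma \ref{VitalLemma1} needed all closed points of $\Spec(D)$; this is exactly why the hypothesis here is stated at $d$ alone.

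First I would check that $\mathbf{R}\pi_{*}\E$ lies in $D^b(R\text{-}\mod)$ and has $\mathbb{C}^*$-equivariant cohomology sheaves. Boundedness and coherence of the cohomology are precisely Lemma \ref{Bounded Push Down}. For the equivariance, since $\pi$ is a $\mathbb{C}^*$-equivariant projective morphism and the cohomology sheaves $\mathcal{H}^q(\E)$ are $\mathbb{C}^*$-equivariant, Lemma \ref{Action Lemma}(3) shows each $\mathbf{R}^p\pi_*\mathcal{H}^q(\E)$ is $\mathbb{C}^*$-equivariant; the hypercohomology spectral sequence $\mathbf{R}^p\pi_*\mathcal{H}^q(\E)\Rightarrow \mathcal{H}^{p+q}(\mathbf{R}\pi_*\E)$ invoked in Lemma \ref{Bounded Push Down} is then a spectral sequence in $\Coh^{\mathbb{C}^*}\Spec(R)$, so its abutment has $\mathbb{C}^*$-equivariant cohomology.

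Next I would apply Lemma \ref{GradedVitalLemma1} to $\mathbf{R}\pi_{*}\E$ on $\Spec(R)$, using the affine morphism $\alpha:\Spec(R)\to\Spec(D)$ whose good $\mathbb{C}^*$-action has unique fixed closed point $d$ with fibre inclusion $l_d:\Spec(R_d)\to\Spec(R)$. By that lemma it suffices to prove $\mathbf{L}l_d^*\mathbf{R}\pi_{*}\E$ is a $\mathbb{C}^*$-equivariant coherent sheaf on $\Spec(R_d)$. As $a:\Spec(R_d)\to\Spec(\mathbb{C})$ is affine, being a sheaf is detected after applying the exact functor $a_*$, and I would compute $a_*\mathbf{L}l_d^*\mathbf{R}\pi_{*}\E$ exactly as in Corollary \ref{BaseChangeCorollary}, using Lemma \ref{Derived Base Change} twice with the $\mathbb{C}^*$-equivariant diagram \eqref{ClosedBaseChange}:
\begin{align*}
a_* \mathbf{L}l_d^* \mathbf{R}\pi_{*} \E & \cong \mathbf{L}i_d^* \alpha_{*} \mathbf{R}\pi_{*} \E \cong \mathbf{L}i_d^* \mathbf{R}\rho_{*} \E \cong \mathbf{R}q_{*} \mathbf{L}j_d^* \E \cong a_* \mathbf{R}p_{*} \mathbf{L}j_d^* \E.
\end{align*}
The final term is a sheaf on $\Spec(\mathbb{C})$ because $\mathbf{R}p_{*}\mathbf{L}j_d^*\E$ is a sheaf on $\Spec(R_d)$ by hypothesis and $a$ is affine; since every morphism in \eqref{ClosedBaseChange} is $\mathbb{C}^*$-equivariant, all four isomorphisms are $\mathbb{C}^*$-equivariant. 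Hence $\mathbf{L}l_d^*\mathbf{R}\pi_{*}\E$ is a $\mathbb{C}^*$-equivariant coherent sheaf, and Lemma \ref{GradedVitalLemma1} yields that $\mathbf{R}\pi_{*}\E$ is a $\mathbb{C}^*$-equivariant coherent sheaf on $\Spec(R)$ flat over $\Spec(D)$.

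The only real obstacle I anticipate is the equivariant bookkeeping: one must verify that $\mathbf{R}\pi_*$, $\mathbf{L}l_d^*$, $\mathbf{R}\rho_*$ and $\mathbf{L}j_d^*$ genuinely lift to functors on the equivariant derived categories $D_{\mathbb{C}^*}(-)$ (using the existence of enough injectives in $\QCoh^{\mathbb{C}^*}$ recorded earlier) and that the base-change isomorphisms of Lemma \ref{Derived Base Change}, which are stated non-equivariantly, are compatible with these lifts. Because all structure maps are $\mathbb{C}^*$-equivariant this compatibility holds, but it should be asserted explicitly. Everything else — coherence and flatness — is supplied by Lemma \ref{GradedVitalLemma1}, whose proof already absorbs the good-action hypothesis through the equivariant Nakayama Lemma \ref{GradedNakayama}.
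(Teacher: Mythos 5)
Your proposal is correct and follows essentially the same route as the paper: reduce via Lemma \ref{Bounded Push Down} and the graded Lemma \ref{GradedVitalLemma1} to checking the single fibre over the fixed point $d$, then compute $a_*\mathbf{L}l_d^*\mathbf{R}\pi_{*}\E$ by the identical chain of base-change isomorphisms from Lemma \ref{Derived Base Change}. If anything you are slightly more careful than the paper, which cites the ungraded Lemma \ref{VitalLemma1} where the equivariant version is meant and does not spell out why the cohomology sheaves of $\mathbf{R}\pi_*\E$ inherit their $\mathbb{C}^*$-equivariant structure (your spectral-sequence argument via Lemma \ref{Action Lemma}(3)).
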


\begin{proof}
As $\pi$ is projective it follows that $\mathbf{R}\pi_{*} \E \in D^b(R$-$\mod)$ by Lemma \ref{Bounded Push Down} and it has $\mathbb{C}^*$-equivariant cohomology sheaves as $p$ is $\mathbb{C}^*$-equivariant. It then follows from Lemma \ref{VitalLemma1} that if $\mathbf{L}l_d^*  \mathbf{R}\pi_{*} \E$ is a $\mathbb{C}^*$-equivariant coherent sheaf on $\Spec(R_d)$ for the fixed closed point $d \in \Spec(D)$ then $\mathbf{R}\pi_{*} \E$ is a $\mathbb{C}^*$-equivariant coherent sheaf on $\Spec(R)$ which is flat over $\Spec(D)$.

Hence we calculate $\mathbf{L}l_d^* \mathbf{R}\pi_{*} \E$. We note that as $a$ is affine $\mathbf{L}l_d^* \mathbf{R}\pi_{*} \E \in D^b(R$-$\mod)$ is a sheaf on $\Spec(R_d)$ if and only if $a_* \mathbf{L}l_d^* \mathbf{R}\pi_{*} \E$ is a sheaf on $\Spec(D/d)$. Then 
\begin{align*}
a_* \mathbf{L}l_d^* \mathbb{R}\pi_{*} \E & \cong \mathbf{L}i_d^* \alpha_{*} \mathbf{R}\pi_{*} \E \tag{Lemma \ref{Derived Base Change}}\\
&  \cong \mathbf{L}i_d^*  \mathbf{R}\rho_{*} \E  \\
&  \cong  \mathbf{R}q_{*} \mathbf{L}j_d^*   \E  \tag{Lemma \ref{Derived Base Change}} \\
& \cong  a_* \mathbf{R}p_{*} \mathbf{L}j_d^*   \E,
\end{align*}
which is a sheaf on $\Spec(D/d)$ since $\mathbf{R}\pi_{*} \mathbf{L}j_d^* \E$ is a sheaf on $\Spec ( R_d)$ by hypothesis.
\end{proof}

\section{$\mathbb{C}^*$-Equivariant deformations and tilting} \label{Graded Results}
We now expand the results of Section~\ref{Results} to include $\mathbb{C}^*$-equivariant deformations.

For the following proposition let $\pi:X \rightarrow \Spec(R)$ be a $\mathbb{C}^*$-equivariant projective morphism of finite type schemes over $\mathbb{C}$ with $\mathbb{C}^*$-actions such that the $\mathbb{C}^*$-action on $\Spec(R)$ is good and the scheme $X$ is normal. Define $\frak{R}$ to be the completion of $R$ with respect to the ideal $\mathfrak{m}$, and $\frak{R}$ is complete local with a good $\mathbb{C}^*$-action on $\Spec(\frak{R})$. Then define $\psi:\frak{X} \rightarrow \Spec \, \frak{R}$ to be the base change of $\pi$ by the morphism  $\iota: \Spec(\frak{R}) \rightarrow \Spec(R)$ induced by the completion map $R \rightarrow \frak{R}$, and there is a $\mathbb{C}^*$-action on $\frak{X}$ such that $\psi$ is $\mathbb{C}^*$-equivariant.

\begin{LiftingToEquiTilting} \label{GradedTiltingLift} A $\mathbb{C}^*$-equivariant tilting bundle $\frak{T}$ on $\frak{X}$ lifts uniquely to a $\mathbb{C}^*$-equivariant tilting bundle $T$ on $X$.
\end{LiftingToEquiTilting}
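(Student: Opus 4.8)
The plan is to produce $T$ by \emph{algebraising} $\frak{T}$ across the completion, writing $\iota_X\colon \frak{X}\to X$ for the base change of $\iota\colon\Spec\frak{R}\to\Spec R$, and then to verify the three conditions of Definition \ref{TiltingDef} by transporting data along $\iota_X$. The starting point is that a good $\mathbb{C}^*$-action makes the completion transparent on equivariant data: since the action is good, $R$ is positively graded with $R_0=\mathbb{C}$ and finite-dimensional graded pieces, each of which is left unchanged by completing at $\mathfrak{m}$ (the graded-versus-ordinary completion comparison of \cite[A1]{NamikawaFlopsandPoisson}). Combined with Remark \ref{GradingActionRemark}, pullback along $\iota_X$ should induce an equivalence $\Coh^{\mathbb{C}^*}(X)\xrightarrow{\sim}\Coh^{\mathbb{C}^*}(\frak{X})$: full faithfulness follows from flat base change for $\Hom$ (Lemma \ref{Flat Base Change}, as $\iota$ is flat) after taking $\mathbb{C}^*$-invariants degree by degree, and essential surjectivity is exactly the statement that an equivariant coherent sheaf is assembled from its graded pieces. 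I would then define $T$ to be the unique (up to equivariant isomorphism) equivariant coherent sheaf with $\iota_X^*T\cong\frak{T}$, so that uniqueness of the lift is immediate.

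\textbf{Local freeness and $\Ext$-vanishing.} That $T$ is locally free follows because $\iota_X$ is flat and $\iota_X^*T=\frak{T}$ is locally free, so $T$ is locally free along the fixed fibre; since the locus where a coherent sheaf is locally free is open and here $\mathbb{C}^*$-stable, goodness of the action (every orbit closure meets the fixed point) propagates this to all of $X$. For Definition \ref{TiltingDef}(1), I would apply Lemma \ref{DualPullback} together with base change (Lemma \ref{Derived Base Change}, noting $\iota$ is flat) to obtain $\mathbf{L}\iota^*\,\mathbf{R}\Hom_X(T,T)\cong\mathbf{R}\Hom_{\frak{X}}(\frak{T},\frak{T})$, which is concentrated in degree zero as $\frak{T}$ is tilting. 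Each $\Ext^i_X(T,T)$ with $i\neq 0$ is then an equivariant coherent sheaf on $\Spec R$ whose completion vanishes; by the support argument of Lemma \ref{GradedNakayama} its support is closed and $\mathbb{C}^*$-stable, hence would contain the fixed point if nonempty, so it must vanish. Condition (3) is automatic since $T$ is a locally free coherent sheaf, hence a compact object of $D(X)$.

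\textbf{Generation — the main obstacle.} The crux is Definition \ref{TiltingDef}(2), and here a naive use of the criterion of Remark \ref{RemarkTU} fails: feeding an arbitrary $\mathcal{F}\in D^-(\Coh X)$ with $\mathbf{R}\Hom_X(T,\mathcal{F})=0$ through $\iota_X$ yields only $\iota_X^*\mathcal{F}=0$, i.e.\ vanishing on the formal neighbourhood of the fixed fibre, which does not force $\mathcal{F}=0$ for non-equivariant $\mathcal{F}$. The completion at the unique fixed point is blind to the part of $X$ away from that fibre. I would therefore prove generation on equivariant objects and then invoke Lemma \ref{Equivariant Ample Line Bundle} and the accompanying remark that, for normal quasi-projective $X$ with $\mathbb{C}^*$-action, the localising subcategory generated by the equivariant coherent sheaves is all of $D(X)$. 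Thus it suffices to place every equivariant coherent sheaf $\mathcal{G}$ in the localising subcategory $\langle T\rangle$ generated by $T$. The $\Ext$-vanishing makes $T$ a partial tilting object, so the unit $\mathrm{id}\to T_*T^*$ is an isomorphism and the cone $C$ of the equivariant counit $T^*_{\mathbb{C}^*}T^{\mathbb{C}^*}_*\mathcal{G}\to\mathcal{G}$ satisfies $\mathbf{R}\Hom_X(T,C)=0$; by Remark \ref{ShiftedGradingRemark} this counit agrees with the non-equivariant one, so $C\in D^-(\Coh X)$ is equivariant.

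\textbf{Closing the loop.} Applying Lemma \ref{DualPullback} with base change as above gives $\mathbf{R}\Hom_{\frak{X}}(\frak{T},\iota_X^*C)=0$ with $\iota_X^*C\in D^-(\Coh\frak{X})$, whence $\iota_X^*C=0$ because $\frak{T}$ is tilting (Remark \ref{RemarkTU} applied on $\frak{X}$, which is projective over the complete local ring $\frak{R}$). Since $C$ is equivariant, each $\mathcal{H}^j(C)$ is an equivariant coherent sheaf whose restriction to the fixed fibre vanishes, so Lemma \ref{GradedNakayama} forces $\mathcal{H}^j(C)=0$ and hence $C=0$. Therefore $\mathcal{G}\cong T^*_{\mathbb{C}^*}T^{\mathbb{C}^*}_*\mathcal{G}\in\langle T\rangle$, and generation follows. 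I expect the bookkeeping around boundedness of the cone and the precise formulation of the equivalence $\Coh^{\mathbb{C}^*}(X)\simeq\Coh^{\mathbb{C}^*}(\frak{X})$ to require care, but the genuine conceptual obstacle is the one flagged above: transferring generation through a completion that sees only the fixed fibre, which is precisely why the reduction to equivariant sheaves via goodness of the $\mathbb{C}^*$-action is indispensable.
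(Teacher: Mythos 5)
Your proposal is correct and follows essentially the same route as the paper's proof: lift the bundle using the good action and the graded completion comparison (the paper simply cites \cite[Proposition A.6]{NamikawaFlopsandPoisson} where you sketch the equivariant algebraisation), obtain $\Ext$-vanishing from flat base change, Lemma \ref{DualPullback} and graded Nakayama (Lemma \ref{GradedNakayama}), and prove generation by reducing to $\mathbb{C}^*$-equivariant objects via the equivariant ample line bundle, forming the cone of the counit, using Remark \ref{ShiftedGradingRemark} to see it is equivariant, and killing it after pullback to the completion. The only cosmetic differences are that the paper tests the counit just on powers $\mathcal{L}^{\otimes m}$ of the equivariant ample bundle rather than on all equivariant coherent sheaves, and closes with Lemma \ref{GradedVitalLemma1} followed by graded Nakayama where you exploit flatness of $\iota$ to apply graded Nakayama to each cohomology sheaf directly.
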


\begin{proof}
The morphism $\iota:\Spec(\frak{R}) \rightarrow \Spec(R)$ is induced by the completion of rings, hence is a flat affine morphism. Let  $\zeta:\frak{X} \rightarrow X$ denote the induced inclusion of fibre over $\frak{R}$. We let $i_{\mathfrak{m}}: \Spec(\mathbb{C}) \rightarrow \Spec(R)$ denote the inclusion of the unique $\mathbb{C}^*$-fixed closed point of $\Spec(R)$ and $j_{\mathfrak{m}}$ the inclusion of the corresponding fibre,  and we note $i_{\mathfrak{m}}$ and $j_{\mathfrak{m}}$ factor through $\iota$ and $\zeta$ respectively.

As the action on $\Spec(R)$ is good $\frak{T}$ lifts uniquely to a locally free $\mathbb{C}^*$-equivariant coherent sheaf $T$ on $X$ by \cite[Proposition A.6]{NamikawaFlopsandPoisson}. 

As $T$ is a locally free coherent sheaf it is a compact object of $D(X)$ and so satisfies Definition \ref{TiltingDef}(3). We now show that $T$ satisfies Definition \ref{TiltingDef}(1). In particular $\CalHom_{X}(T,T)$ is a $\mathbb{C}^*$-equivariant sheaf on $X$, hence $\mathbf{R}^k\Hom_X(T,T) =\Ext^k_X(T,T)$ is a $\mathbb{C}^*$-equivariant sheaf on $\Spec(R)$ and
\begin{align*}
\iota^* \Ext^k_X(T,T) & \cong \iota^*\mathbf{R}^k \pi_* \CalHom_X(T,T) \\
& \cong \mathbf{R}^k \psi_* \zeta^* \CalHom_{X}(T,T)  \tag{Flat base change} \\
& \cong \Ext^k_{\frak{X}}(\zeta^*T,\zeta^*T) \tag{Lemma \ref{DualPullback}} \\
& \cong \Ext^k_{\frak{X}}(\frak{T},\frak{T}). \tag{$\zeta^* T \cong \frak{T}$}
\end{align*}
Hence, as $\frak{T}$ is a tilting bundle on $\frak{X}$  it follows that  $\iota^*\Ext_X^k(T,T)=0$ when $k \neq 0$ so $i_{\mathfrak{m}}^* \Ext_X^k(T,T) =0$ and hence $\Ext_X^k(T,T) =0$ for $k \neq 0$ by Lemma \ref{GradedNakayama}.

Hence $T$ satisfies Definitions \ref{TiltingDef}(1) and (3) and so to prove that $T$ is a tilting bundle we need only check generation, Definition \ref{TiltingDef}(2), which we do now. We recall that $A=\End_{X}(T)$ and consider the adjoint functors  $T_*:=\mathbf{R}\Hom_{X}(T,-): D(X) \rightarrow D(A)$ and $T^*:= (-) \otimes_{A}^{\mathbf{L}} T: D(A) \rightarrow D(X)$. We will show that these are inverse equivalences by checking that the unit and counit are isomorphisms, and this will imply that $T$ generates $D(X)$ as under the equivalence $T$ corresponds to  $A$ which generates $D(A)$. The unit is automatically an isomorphism as $T_*T^*(A) \cong A$ and $A$ generates $D(A)$, so we are only required to show that the counit is an isomorphism. There is an $\mathbb{C}^*$-equivariant ample line bundle $\mathcal{L}$ on $X$ by Lemma \ref{Equivariant Ample Line Bundle}, and hence by Lemma \ref{Action Lemma} we have  $\mathbb{C}^*$-equivariant line bundles $\mathcal{L}^{\otimes m}$ for all $m \in \mathbb{Z}$. By Remark \ref{Equivariant Ample Line Bundle} $\{ \mathcal{L}^{\otimes m} \}$ generates $D(X)$ hence to show that the functors are an equivalence it is enough to check that the counit
\begin{equation*}
T^*T_* \mathcal{L}^{\otimes m} \xrightarrow{} \mathcal{L}^{\otimes m}
\end{equation*}
is a quasi-isomorphism for any $m$. In particular $T$ and $\mathcal{L}^{\otimes m}$ are $\mathbb{C}^*$-equivariant and hence the cone to this map, $C$, exists as a restriction from the $\mathbf{C}^*$-equivariant derived category, as noted in Remark \ref{ShiftedGradingRemark}.  We note that $T_*,T^*$ restrict to functors between $D^-( \Coh \, X)$ and $D^-(A$-$\mod)$ and hence $C$ can be realised as some bounded above complex of coherent sheaves whose cohomology sheaves can be given $\mathbb{C}^*$-equivariant structures
\begin{equation*}
T^*T_* \mathcal{L}^{\otimes m}  \xrightarrow{} \mathcal{L}^{\otimes m} \rightarrow C \rightarrow.
\end{equation*}
 Then, as the unit is an isomorphism $\mathbf{R}\Hom_{X}(T,C)=0$ and 
\begin{align*}
0 & \cong  \mathbf{L}\iota^* \mathbf{R}\Hom_{X}(T,C)\\ 
& \cong \mathbf{L} \iota^*\mathbf{R}\pi_{*} \RCalHom_{X}(T,C) \\
& \cong \mathbf{R} \psi_*  \mathbf{L} \zeta^*  \RCalHom_{X}(T,C)  \tag{Flat base change}\\
& \cong \mathbf{R}\psi_*   \RCalHom_{\frak{X}}(\mathbf{L}\zeta^* T, \mathbf{L}\zeta^* C) \tag{Lemma \ref{DualPullback}} \\
& \cong \mathbf{R}\Hom_{\frak{X}}(\frak{T},\mathbf{L}\zeta^*C). \tag{$\mathbb{L}\zeta^*\frak{T} \cong T$}
\end{align*}
 Hence $\mathbf{L}\zeta^* C=0$ as $\frak{T}$ is tilting bundle on $\frak{X}$. It then follows that $\mathbf{L}j_{\mathfrak{m}}^*C \cong 0$. Therefore $C$ is a $\mathbb{C}^*$-equivariant coherent sheaf on $X$ flat over $\Spec(R)$ by Lemma \ref{GradedVitalLemma1} and $j_{\mathfrak{m}}^*C \cong 0$ so $C \cong 0$ by Lemma \ref{GradedNakayama}. Hence we have proved that the functors are an equivalence and so $T$ is a tilting bundle on $X$.
\end{proof}

We now apply this result to $\mathbb{C}^*$-equivariant deformations. For the following two lemmas and theorem let $p:X_0 \rightarrow \Spec(R_0)$ be a $\mathbb{C}^*$-equivariant projective morphism of Noetherian schemes with $\mathbb{C}^*$-action, suppose that $\left(\rho:X \rightarrow \Spec \, D, j_d:X_0 \rightarrow X \right)$ is a $\mathbb{C}^*$-equivariant deformation of $X_0$, and assume that $\rho$ factors through a $\mathbb{C}^*$-equivariant projective morphism $\pi:X \rightarrow \Spec(R)$ followed by a $\mathbb{C}^*$-equivariant, flat, affine morphism $\alpha:\Spec(R) \rightarrow \Spec(D)$ such that $R\otimes_D D/d \cong R_0$.

We first check that $\mathbb{C}^*$-actions on tilting bundles lift to deformations over Noetherian, complete local rings.

\begin{GradedTiltingDeform} \label{GradedTiltingDeform}
Assume that $(D,d)$ is a Noetherian complete local ring. Then any $\mathbb{C}^*$-equivariant tilting bundle $T_0$ on $X_0$ lifts to a $\mathbb{C}^*$-equivariant tilting bundle $T$ on $X$. Further, $A:=\End_{X}(T)$ is a $D$-algebra and is flat as a $D$-module. 
\end{GradedTiltingDeform}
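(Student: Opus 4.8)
The plan is to reduce the statement to the ungraded results of Section~\ref{Results} and then upgrade the resulting lift to a $\mathbb{C}^*$-equivariant one. Since $(D,d)$ is a complete local Noetherian ring, the underlying non-equivariant data $p:X_0 \rightarrow \Spec(R_0)$, the deformation $\rho:X \rightarrow \Spec(D)$, and its factorisation through $\pi$ and $\alpha$ satisfy exactly the hypotheses of Lemmas~\ref{TiltingDeform} and~\ref{BetterGeneration}. Forgetting the $\mathbb{C}^*$-structure, $T_0$ is then a tilting bundle on $X_0$ with $\Ext^i_{X_0}(T_0,T_0)=0$ for $i\neq 0$, so it lifts uniquely to a tilting bundle $T$ on $X$ with $\Ext^i_X(T,T)=0$ for $i\neq 0$ and with $A=\End_X(T)$ flat over $D$. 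Thus the tilting property, the $\Ext$-vanishing, and the flatness of $A$ are already in hand, and the only remaining task is to equip this unique lift $T$ with a $\mathbb{C}^*$-equivariant structure restricting to the given one on $T_0$, i.e.\ with $j_d^*T\cong T_0$ as $\mathbb{C}^*$-equivariant sheaves.

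The equivariant structure I would produce from uniqueness of lifts. Note first that $d$ is the unique maximal ideal of $D$ and hence $\mathbb{C}^*$-stable, so each Artinian quotient $(D_n,d_n):=(D/d^{n+1},d/d^{n+1})$ inherits a $\mathbb{C}^*$-action and each $X_n:=X\times_{\Spec(D)}\Spec(D_n)$ is a $\mathbb{C}^*$-equivariant deformation of $X_0$, with $X$ recovered as the inverse limit. I would then run the inductive lifting in the proof of Lemma~\ref{TiltingDeform} in the $\mathbb{C}^*$-equivariant category: by an equivariant refinement of \cite[Theorem 7.1]{HartDef}, the obstruction to equivariantly lifting an equivariant $T_n$ to $X_{n+1}$, together with the ambiguity in such a lift, lies in the $\mathbb{C}^*$-invariant parts of $\CH^2(X_n,\CalEnd_{X_n}(T_n)\otimes_{X_n} q_n^*d_n)$ and $\CH^1(X_n,\CalEnd_{X_n}(T_n)\otimes_{X_n} q_n^*d_n)$. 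These groups already vanish in the proof of Lemma~\ref{TiltingDeform}, so a fortiori their invariant parts vanish, a unique equivariant lift $T_{n+1}$ exists at each stage, and passing to the inverse limit via \cite[Remark 29.22.8 (Tag 088F)]{StacksProject} endows $T$ with the desired $\mathbb{C}^*$-equivariant structure; uniqueness of the underlying lift then identifies it with the bundle from the first paragraph. An alternative is to compare the two bundles $\sigma^*T$ and $\beta^*T$ over the completed base $\Spec(D)\,\hat{\times}\,\mathbb{C}^*$ and invoke uniqueness of lifts directly, in the spirit of \cite[Proposition A.6]{NamikawaFlopsandPoisson}.

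The main obstacle is precisely this equivariant upgrade: one must set up the deformation theory of $\mathbb{C}^*$-equivariant bundles carefully enough to confirm that the relevant obstruction and automorphism spaces are the invariant (degree-zero) parts of the ordinary cohomology, and then verify the cocycle condition of Definition~\ref{C* Action}(3) for the limiting isomorphism $\phi:\sigma^*T\xrightarrow{\sim}\beta^*T$, which is again a consequence of uniqueness of lifts. Everything else is inherited from the ungraded case; if one prefers to argue intrinsically, the $\Ext$-vanishing and flatness can be re-derived in the equivariant setting, since $\CalEnd_X(T)$ is $\mathbb{C}^*$-equivariant by Lemma~\ref{Action Lemma}, Lemma~\ref{DualPullback} identifies its restriction along $j_d$ with $\RCalHom_{X_0}(T_0,T_0)$, and Corollary~\ref{BaseChangeCorollary} then shows that $\mathbf{R}\Hom_X(T,T)$ is a coherent sheaf on $\Spec(R)$ flat over $\Spec(D)$, giving both $\Ext^i_X(T,T)=0$ for $i\neq 0$ and the flatness of $A$.
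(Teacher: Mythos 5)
Your first paragraph coincides exactly with the paper's opening move: Lemmas \ref{TiltingDeform} and \ref{BetterGeneration} give the (unique, non-equivariant) lift $T$, the tilting property, and the flatness of $A=\End_X(T)$, so only the $\mathbb{C}^*$-structure on $T$ remains to be produced. For that remaining step the paper takes what you describe as your \emph{alternative}, not your primary route: it observes that $X \times \mathbb{C}^*$ is itself a deformation of $X_0 \times \mathbb{C}^*$ over the \emph{same} complete local base $(D,d)$ --- it is flat over $\Spec(D)$ and projective over $\Spec(R[t,t^{-1}])$, so the hypotheses of Lemma \ref{TiltingDeform} hold verbatim --- and that $\beta^*T$ and $\sigma^*T$ are both lifts of $\beta_0^*T_0 \cong \sigma_0^*T_0$, the identification being the given equivariant structure $\phi_0$. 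Uniqueness of lifts then yields a canonical isomorphism $\phi:\beta^*T \to \sigma^*T$, and the identity and cocycle axioms of Definition \ref{C* Action} are checked by the same uniqueness principle made concrete: for instance, setting $\chi := (1\times id_X)^*\phi - id_T$, one notes $j_d^*\chi=0$, forms the cokernel $T \xrightarrow{\chi} T \to C \to 0$, finds $j_d^*C \cong T_0$, and concludes $C \cong T$ by uniqueness of the lift, whence $\chi=0$. This route needs no equivariant deformation theory at all. Your primary route --- rerunning the Artinian induction equivariantly --- is genuinely different and morally sound ($\mathbb{C}^*$ is linearly reductive, so invariants are exact and the equivariant obstruction and ambiguity spaces should indeed be the invariant parts of the usual cohomology groups), but as written it rests on two unestablished ingredients: the ``equivariant refinement'' of \cite[Theorem 7.1]{HartDef} is asserted rather than proved, and the passage to the limit is subtler than the citation suggests, since the equivariance isomorphisms live on $X_n \times \mathbb{C}^*$, whose inverse limit is the completed product $X \hat{\times} \mathbb{C}^*$ rather than $X \times \mathbb{C}^*$, so algebraizing $\phi$ is not covered by \cite[Remark 29.22.8 (Tag 088F)]{StacksProject} alone. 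What the paper's trick buys is precisely the avoidance of both issues, by recycling the already-proven ungraded uniqueness statement over an enlarged fibre direction; what your primary route would buy, if completed, is a piece of equivariant obstruction theory that the paper never needs.
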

\begin{proof}
By Lemmas \ref{TiltingDeform} and \ref{BetterGeneration} we know that $T_0$ lifts to a tilting bundle $T$ on $X$ satisfying all of the requirements apart from possibly $\mathbb{C}^*$-equivariance. Hence we can assume we have a tilting bundle $T$ such that $A$ is flat as a $D$-module, but must still check that it has a $\mathbb{C}^*$-equivariant structure lifted from that on $T_0$. We do this now.

We let $\sigma_0,\beta_0$ denote the $\mathbb{C}^*$-action and projection maps for $X_0$ and $\sigma,\beta$ the $\mathbb{C}^*$-action and projection maps for $X$. As $T_0$ is $\mathbb{C}^*$-equivariant there is an isomorphism $\phi_0:\beta_0^*T \rightarrow \sigma_0^*T$ and we aim to produce an isomorphism $\phi: \beta^*T \rightarrow \sigma^*T$ realising a $\mathbb{C}^*$-equivariant structure on $T$. Firstly, for the locally free coherent sheaf $\sigma_0^*T \cong \beta_0^*T$ we note that $\Ext^i_{X_0 \times \mathbb{C}^*}(\sigma_0^*T_0,\sigma_0^*T_0) \cong  \Ext^i_{X_0 \times \mathbb{C}^*}(\beta_0^*T_0,\beta_0^*T_0) \cong 0$ for $i \neq 0$ as $\beta_0, \sigma_0$ are flat and $T_0$ is a tilting bundle. Hence by Lemma \ref{TiltingDeform} both $\sigma_0^*T_0$ and $\beta_0^*T_0$ lift uniquely to tilting bundles on $X \times \mathbb{C}^*$, which we denote by $T_{\sigma}$ and $T_{\beta}$, and hence there must be a canonical isomorphism between them $\theta: T_{\beta} \rightarrow T_{\sigma}$ as the lifts are unique. Further, both $\sigma^*T$ and $\beta^*T$ are lifts of $\sigma_0^*T_0$ and $\beta_0^*T_0$, and as these lifted uniquely there are canonical isomorphisms $\theta_\sigma: T_{\sigma} \rightarrow  \sigma^*T $ and $\theta_{\beta}: \beta^*T \rightarrow T_{\beta}$ and hence we have constructed an isomorphism
\[
\phi: \beta^*T \xrightarrow{\theta_{\beta}} T_{\beta} \xrightarrow{\theta} T_{\sigma} \xrightarrow{\theta_{\sigma}} \sigma^*T.
\] To check that this defines a $\mathbb{C}^*$-equivariant structure on the vector bundle $T$ we must also check the two conditions of Definition \ref{C* Action}. To check it satisfies the condition $(1 \times id_X)^* \phi= id_{T}$ we consider $\chi:= (1 \times id_{X})^* \phi -  id_{T} \in \Hom_{X}(T,T)$ and note that  $j_d^*(\chi)=0$ as $\phi_0$ induced a $\mathbb{C}^*$-equivariant structure on $T_0$. We then consider the cokernel of $\chi$ 
\begin{equation*}
T \xrightarrow{\chi} T \rightarrow C \rightarrow 0.
\end{equation*}
Applying $j_d^*$ we find
\begin{equation*}
T_0 \xrightarrow{0} T_0 \rightarrow j_d^*C \rightarrow 0.
\end{equation*} 
and deduce that $T_0 \cong j_d^*C$, and hence as the lift was unique $T \cong C$ so  $\chi$ is zero and $(1 \times id_{X})^* \phi = id_{T}$. A similar argument shows that $\beta_{23}^*\phi \circ  (id_{\mathbb{C}^*} \times \sigma)^*\phi =(m \times id_{X})^*\phi$, and hence $\phi$ defines a $\mathbb{C}^*$-equivariant structure on $T$.
\end{proof}

We now expand our results to include deformations which are not just infinitesimal but have a base space of finite type with good $\mathbb{C}^*$-action. For the following lemma and theorem we add the extra assumptions that the $\mathbb{C}^*$-actions on $\Spec(D)$ and $\Spec(R)$ are good, that $(D,d)$ is a graded complete local ring of finite type over $\mathbb{C}$, and that $X$ is normal. 

\begin{GradedGeneration} \label{GradedGeneration} 
Any $\mathbb{C}^*$-equivariant tilting bundle $T_0$ on $X_0$ lifts to a $\mathbb{C}^*$-equivariant tilting bundle $T$ on $X$.  Further, $A:=\End_{X}(T)$ is a graded $D$-algebra which is flat as  $D$-module. 
\end{GradedGeneration}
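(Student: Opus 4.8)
The plan is to reduce to the already-established infinitesimal lifting of Lemma \ref{GradedTiltingDeform} by completing the base at its unique fixed point, and then to descend the resulting tilting bundle from this completion back to the finite type scheme $X$ using the good-action lifting of Proposition \ref{GradedTiltingLift}. Throughout, the $\mathbb{C}^*$-equivariant base change results of Section \ref{GradedCase} control cohomology and flatness. The point to keep in mind is that $D$ is only \emph{graded} complete local, so Lemma \ref{GradedTiltingDeform} cannot be applied on the nose; the bridge between the graded-complete finite type picture and the genuinely complete local picture is exactly Proposition \ref{GradedTiltingLift}.

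First I would pass to completions. Let $\frak{D}$ be the completion of $D$ at $d$ and $\frak{R}$ the completion of $R$ at the fixed maximal ideal $\mathfrak{m}$, both Noetherian complete local rings carrying $\mathbb{C}^*$-actions, and form $\psi:\frak{X}\to\Spec\frak{R}$ together with the inclusion $\zeta:\frak{X}\to X$ of the fibre over $\frak{R}$ as in the preamble to Proposition \ref{GradedTiltingLift}. Since $\mathfrak{m}$ lies over $d$, the morphism $\alpha$ induces a $\mathbb{C}^*$-equivariant flat affine morphism $\Spec\frak{R}\to\Spec\frak{D}$, and I would check that $\frak{R}$ is flat over $\frak{D}$ with $\frak{R}\otimes_{\frak{D}}\frak{D}/\hat{d}\cong\widehat{R_0}$, the completion of $R_0$ at its fixed point. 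Consequently $\frak{X}\to\Spec\frak{D}$ is a $\mathbb{C}^*$-equivariant deformation, over the Noetherian complete local ring $\frak{D}$, of the completion $\widehat{X_0}:=X_0\times_{\Spec R_0}\Spec\widehat{R_0}$ of the central fibre.

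Next I would produce a tilting bundle on $\frak{X}$. The pullback $\widehat{T_0}$ of $T_0$ to $\widehat{X_0}$ is a $\mathbb{C}^*$-equivariant tilting bundle: the vanishing $\Ext^k_{\widehat{X_0}}(\widehat{T_0},\widehat{T_0})=0$ for $k\neq 0$ follows from flat base change (Lemma \ref{Flat Base Change}) together with Lemma \ref{DualPullback}, exactly as in the $\Ext$-computation in the proof of Proposition \ref{GradedTiltingLift}, while generation follows from the criterion of Remark \ref{RemarkTU} by the same base change argument. Applying Lemma \ref{GradedTiltingDeform} to the deformation $\frak{X}\to\Spec\frak{D}$ then lifts $\widehat{T_0}$ to a $\mathbb{C}^*$-equivariant tilting bundle $\frak{T}$ on $\frak{X}$ restricting to $\widehat{T_0}$. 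Since $\pi:X\to\Spec R$ is a $\mathbb{C}^*$-equivariant projective morphism with good action on $\Spec R$ and $X$ normal, Proposition \ref{GradedTiltingLift} lifts $\frak{T}$ uniquely to a $\mathbb{C}^*$-equivariant tilting bundle $T$ on $X$ with $\zeta^*T\cong\frak{T}$.

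It remains to identify the central fibre and to establish the flatness and grading of $A:=\End_X(T)$. Tracking the two completions and using the uniqueness of the lifts, I would verify that $j_d^*T\cong T_0$, descending the isomorphism along the faithfully flat completion of the central fibre at its fixed point; this is the fiddly bookkeeping step, since $X_0$ and $\widehat{X_0}$ are genuinely distinct. Granting this, the natural grading on endomorphism algebras of $\mathbb{C}^*$-equivariant locally free sheaves established earlier makes $A=\End_X(T)$ a graded $R$-module, hence a graded $D$-algebra. For flatness over $D$, set $\E:=\CalEnd_X(T)\cong\RCalHom_X(T,T)\in D^b(\Coh X)$, which has $\mathbb{C}^*$-equivariant cohomology; by Lemma \ref{DualPullback} and $j_d^*T\cong T_0$ one gets $\mathbf{L}j_d^*\E\cong\RCalHom_{X_0}(T_0,T_0)$, so $\mathbf{R}p_*\mathbf{L}j_d^*\E\cong\mathbf{R}\Hom_{X_0}(T_0,T_0)=A_0$ is a coherent sheaf on $\Spec R_0$. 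Corollary \ref{GradedBaseChangeCorollary} then yields that $\mathbf{R}\pi_*\E=\End_X(T)=A$ is a $\mathbb{C}^*$-equivariant coherent sheaf on $\Spec R$ flat over $\Spec D$, i.e. $A$ is flat as a $D$-module. The main obstacle I anticipate is precisely the juggling of the two completions, at $d\in\Spec D$ and at $\mathfrak{m}\in\Spec R$, needed to apply Lemma \ref{GradedTiltingDeform} (which requires a genuine complete local base) and Proposition \ref{GradedTiltingLift} (which descends only along the fixed-point completion) in tandem, and in particular pinning down $j_d^*T\cong T_0$ so that Corollary \ref{GradedBaseChangeCorollary} applies.
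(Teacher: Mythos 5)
Your proposal is correct in outline and uses the same skeleton as the paper — Lemma \ref{GradedTiltingDeform} for the complete-local lift, Proposition \ref{GradedTiltingLift} for descent along the completion, then Corollary \ref{GradedBaseChangeCorollary} for flatness and Lemma \ref{Action Lemma} for the grading (your endgame is essentially verbatim the paper's) — but you arrange the completions differently, and the difference is substantive. The paper completes only the base, setting $\frak{D}$ to be the $d$-adic completion of $D$ and then $\frak{R}:=R\otimes_D\frak{D}$, $\frak{X}:=X\times_{\Spec(D)}\Spec(\frak{D})$; since $\frak{D}/\hat{d}\cong D/d$, the central fibre of $\frak{X}\rightarrow\Spec(\frak{D})$ is $X_0$ itself, so $T_0$ lifts directly by Lemma \ref{GradedTiltingDeform} and no identification of the central fibre is ever needed. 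You instead complete $R$ at the fixed point $\mathfrak{m}$, which has the virtue that Proposition \ref{GradedTiltingLift} then applies exactly as stated (its $\frak{R}$ is by definition the $\mathfrak{m}$-adic completion, whereas the paper's $R\otimes_D\frak{D}$ is in general not even a local ring, so the paper's invocation of the proposition involves an implicit further base change); the price you pay is that the central fibre becomes $\widehat{X_0}$, which forces your two extra steps: checking $\widehat{T_0}$ is tilting and descending $j_d^*T\cong T_0$ at the end. Both extra steps do go through, but with two clarifications to your sketch. First, generation of $\widehat{T_0}$ is not literally ``the same base change argument'' as the $\Ext$-computation, since an arbitrary test object $\E\in D^{-}(\Coh\,\widehat{X_0})$ cannot be base-changed from $X_0$; instead use adjunction along the affine flat morphism $c:\widehat{X_0}\rightarrow X_0$, namely $\mathbf{R}\Hom_{\widehat{X_0}}(c^*T_0,\E)\cong\mathbf{R}\Hom_{X_0}(T_0,c_*\E)$, so vanishing forces $c_*\E\cong 0$ by compact generation and hence $\E\cong 0$ by faithfulness of pushforward along affine morphisms. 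Second, the ``fiddly bookkeeping'' has a clean resolution: both $T_0$ and $j_d^*T$ are $\mathbb{C}^*$-equivariant locally free sheaves on $X_0$ whose pullbacks to $\widehat{X_0}$ are identified with $\widehat{T_0}$, so the uniqueness part of the lifting result \cite[Proposition A.6]{NamikawaFlopsandPoisson} (applied to $X_0\rightarrow\Spec(R_0)$, whose $\mathbb{C}^*$-action is good) gives $j_d^*T\cong T_0$, after which Corollary \ref{GradedBaseChangeCorollary} applies exactly as you say.
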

\begin{proof}

We define $\frak{D}$ to be the completion of $D$ with respect to its unique maximal ideal $d$, and define $\frak{R}:= R \otimes_{D} \frak{D}$ and $\frak{X}:=X \times_{\Spec(D)} \frak{D}$. It follows that $T_0$ lifts to a $\mathbb{C}^*$-equivariant tilting bundle on $\frak{X}$ by Lemma \ref{GradedTiltingDeform}, and then it follows that $T$ lifts to a $\mathbb{C}^*$-equivariant tilting bundle $T$ on $X$ by Proposition \ref{GradedTiltingLift}.  We now check that $\Hom_{X}(T,T)$ is flat over $\Spec(D)$. To do this we make use of the unique $\mathbb{C}^*$-invariant closed point $d \in \Spec(D)$ and then the argument is similar to the proof of Lemma \ref{TiltingDeform}. We consider the diagram (\ref{ClosedBaseChange}) for the inclusion of $d$ and then $\mathbf{L} j_d^* \RCalHom_{X}(T,T) \cong \RCalHom_{X_0}(\mathbf{L} j_d^*T,\mathbf{L} j_d^*T)$ by Lemma \ref{DualPullback}. Hence as $\mathbf{R}p_*\RCalHom_{X_0}(\mathbf{L}j_d^*T,\mathbf{L}j_d^*T) \cong \mathbf{R}\Hom_{X_0}(T_0,T_0)$ is a $\mathbb{C}^*$-equivariant coherent sheaf on $\Spec(R_0)$ it follows by Corollary \ref{GradedBaseChangeCorollary} that $\mathbf{R}\Hom_{X}(T,T)$ is a $\mathbb{C}^*$-equivariant coherent sheaf on $\Spec(R)$ and is flat over $\Spec(D)$. In particular both $\Ext^i_{X}(T,T)=0$ for $i \neq 0$ and $A=\End_{X}(T)$ is flat as a $D$-module. It is graded by Lemma \ref{Action Lemma}.
\end{proof}

Hence the graded version of Theorem \ref{Main} holds. 

\begin{Graded Main} \label{Graded Main}
Any $\mathbb{C}^*$-equivariant tilting bundle $T_0$ on $X_0$ lifts to a $\mathbb{C}^*$-equivariant tilting bundle $T$ on $X$. Further, $T_0$ defines the graded ring $A_0=\End_{X_0}(T_0)$ and $T$ defines the graded ring $A:=\End_{X}(T)$ which is a graded $D$-algebra and flat as a $D$-module, there is a graded map $A \rightarrow A_0$ such that $A \otimes_{D} D/d \cong A_0$, and this map and isomorphism define $D\rightarrow A$ as a graded deformation of $A_0$. 
\end{Graded Main}

\begin{proof}
By Lemma \ref{GradedGeneration} a $\mathbb{C}^*$-equivariant tilting bundle $T_0$ on $X_0$ lifts to a $\mathbb{C}^*$-equivariant tilting bundle $T$ on $X$. In particular, the $\mathbb{C}^*$-equivariant flat morphism $\rho:X \rightarrow \Spec(D)$ produces a graded map $\theta:D \rightarrow A$ defining $A$ as a flat and graded $D$-algebra. The $\mathbb{C}^*$-equivariant map $j_d:X_0 \rightarrow X$ defines a unit map $T \rightarrow j_{d*}j_d^*T$ and hence as $j_d^*T\cong T_0$ applying $\Hom_X(T,-)$ produces a graded morphism \[u_d:A= \Hom_{X}(T,T) \rightarrow \Hom_{X}(T,j_{d*}j_d^* T) \cong \Hom_{X_0}(j_d^*T,j_d^*T)\cong A_0.\] Then we recall the diagram \eqref{ClosedBaseChange} corresponding to the closed point $d \in \Spec(D)$, and
\begin{align*}
A \otimes_{D}D/d & := \mathbf{L}i_d^*\mathbf{R}\rho_{*} \RCalHom_{X}(T,T) \tag{$A$ is flat over $D$} \\
&\cong \mathbf{R}q_{*} \mathbf{L}j_d^* \RCalHom_{X}(T,T) \tag{Lemma \ref{Derived Base Change}}\\
&  \cong  \mathbf{R}\Hom_{X_0}(\mathbf{L}j_d^* T, \mathbf{L}j_d^*T)  \tag{Lemma \ref{DualPullback}}\\
 &\cong \mathbf{R}\Hom_{X_0}(T_0,T_0)=A_0 \tag{$j_d^*T \cong T_0$}.
\end{align*}
This isomorphism of graded rings $A \otimes_D D/d \cong A_0$ is uniquely defined by the $\mathbb{C}^*$-equivariant isomorphisms $j_d \times_{\Spec(D)} \Spec(D/d):X_0 \rightarrow X \times_{\Spec(D)} \Spec(D/d) $ and $j_d^*T \cong T_0$. Hence this defines $(\theta:D \rightarrow A, u_d:A \rightarrow A_0)$, a graded deformation of $A_0$.
\end{proof}

We can now consider closed points in $\Spec(D)$ other than the unique $\mathbb{C}^*$-invariant closed point.

\begin{FilteredCorollary} \label{FilteredCorollary} Suppose that we are in the situation of Theorem \ref{Graded Main} such that $A_0$ is positively graded ring, and suppose that $j_z:z \hookrightarrow \Spec(D)$ is the inclusion of a closed point. Then $T_z:=j_z^*T$ is a tilting bundle on the fibre $X_z:=X \times_{D} z$ such that $A_z:=\End_{X_z}(T_z) \cong A \otimes_D D/z$ is a filtered algebra with associated graded algebra isomorphic to $A_0$.
\end{FilteredCorollary}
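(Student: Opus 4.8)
The plan is to verify the three assertions of the corollary in turn, working throughout with the base-change diagram \eqref{ClosedBaseChange} for the closed point $z$ in place of $d$, and using that $A$ is flat over $D$ and that $T$ is a tilting bundle on the total space $X$, both supplied by Theorem \ref{Graded Main}.

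First I would identify $A_z$ and establish the $\Ext$-vanishing at the same time. Since $A=\mathbf{R}\rho_*\RCalHom_X(T,T)$ is concentrated in degree $0$ and flat over $D$, base change along the inclusion $i_z:\Spec(D/z)\hookrightarrow\Spec(D)$ gives
\[
A\otimes_D D/z \cong \mathbf{L}i_z^*\mathbf{R}\rho_*\RCalHom_X(T,T) \cong \mathbf{R}q_*\mathbf{L}j_z^*\RCalHom_X(T,T) \cong \mathbf{R}\Hom_{X_z}(T_z,T_z),
\]
using Lemma \ref{Derived Base Change} for the middle isomorphism (as $\rho$ is flat) and Lemma \ref{DualPullback} together with $\mathbf{L}j_z^*T\cong T_z$ (as $T$ is locally free) for the last, exactly as in the proof of Theorem \ref{Graded Main}. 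As the left-hand side is concentrated in degree $0$, this shows $\Ext^i_{X_z}(T_z,T_z)=0$ for $i\neq 0$ and $\End_{X_z}(T_z)\cong A\otimes_D D/z$, which is assertion (b) and part of Definition \ref{TiltingDef}(1).

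Next I would prove generation. The sheaf $T_z$ is locally free, hence a compact object of $D(X_z)$, so Definition \ref{TiltingDef}(3) holds; moreover $X_z$ is projective over the affine scheme $\Spec(R_z)$, so Remark \ref{RemarkTU} applies and it suffices to verify its criterion. Suppose $\mathcal{F}\in D^-(\Coh\,X_z)$ has $\mathbf{R}\Hom_{X_z}(T_z,\mathcal{F})\cong 0$. The morphism $j_z:X_z\hookrightarrow X$ is a closed immersion, being the base change of $i_z:\Spec(D/z)\hookrightarrow\Spec(D)$, so $j_{z*}\mathcal{F}\in D^-(\Coh\,X)$ and by the $(\mathbf{L}j_z^*,\mathbf{R}j_{z*})$-adjunction
\[
\mathbf{R}\Hom_X(T,j_{z*}\mathcal{F})\cong\mathbf{R}\Hom_{X_z}(\mathbf{L}j_z^*T,\mathcal{F})\cong\mathbf{R}\Hom_{X_z}(T_z,\mathcal{F})\cong 0.
\]
As $T$ is a tilting bundle on $X$ it generates, so $j_{z*}\mathcal{F}\cong 0$ by Remark \ref{RemarkTU}, and since $j_{z*}$ is faithful this forces $\mathcal{F}\cong 0$. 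Hence $T_z$ is a tilting bundle on $X_z$, giving assertion (a).

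Finally I would deduce the filtered statement (c) from the Rees-ring machinery. By Theorem \ref{Graded Main} the data $(D\to A,u_d)$ is a graded deformation of the positively graded algebra $A_0$, so Remark \ref{ReesFibresRemark} produces a graded homomorphism $D\to\mathbb{C}[t]$ (with $t$ in degree $1$) sending homogeneous generators to powers of $t$; concretely one takes the orbit map of $z$ for the good $\mathbb{C}^*$-action, so that the composite $D\to\mathbb{C}[t]\xrightarrow{t\mapsto 1}\mathbb{C}$ is evaluation at $z$ while $t\mapsto 0$ recovers the invariant point $d$. Base change then yields a graded deformation $\widetilde{A}:=A\otimes_D\mathbb{C}[t]$ of $A_0$ over $\mathbb{C}[t]$ (flatness being preserved), and Lemma \ref{ReesRingGradedDeformations} identifies $\widetilde{A}/(t-1)\widetilde{A}\cong A\otimes_D D/z\cong A_z$ as a filtered algebra with associated graded algebra isomorphic to $A_0$. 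The main obstacle is this last step: one must check that the good $\mathbb{C}^*$-action of Definition \ref{GoodAction} furnishes a graded map $D\to\mathbb{C}[t]$ whose $t=1$ fibre is exactly the (possibly non-invariant) point $z$, so that the Rees-ring conclusion transports to $A_z$; the other two steps are routine base change and the closed-immersion generation argument.
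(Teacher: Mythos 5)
Your proof is correct, and for the main geometric assertions it takes a genuinely different route from the paper. The paper's proof is essentially one line: it cites \cite[Proposition 2.9]{BHTilting} for the facts that $T_z$ is a tilting bundle on $X_z$ and that $\End_{X_z}(T_z) \cong A \otimes_D D/z$, and then, exactly as you do, invokes Lemma \ref{ReesRingGradedDeformations} together with Remark \ref{ReesFibresRemark} for the filtered/associated-graded statement. You instead re-prove the restriction-of-tilting result internally: your base-change computation of $A \otimes_D D/z \cong \mathbf{R}\Hom_{X_z}(T_z,T_z)$ via Lemmas \ref{Derived Base Change} and \ref{DualPullback} (valid for an arbitrary closed point $z$, since diagram \eqref{ClosedBaseChange} and the flatness of $\rho$ and of $A$ over $D$ make no use of $z$ being the distinguished point), and your generation argument via the adjunction $\mathbf{R}\Hom_X(T, j_{z*}\mathcal{F}) \cong \mathbf{R}\Hom_{X_z}(\mathbf{L}j_z^*T,\mathcal{F})$ for the closed immersion $j_z$, followed by faithfulness of $j_{z*}$ and the criterion of Remark \ref{RemarkTU}. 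This adjunction trick is notably clean: it transfers the generation property from $X$ to the fibre without any spectral sequence or Nakayama-type argument, and it makes the corollary self-contained within the paper's own toolkit, at the cost of length compared with outsourcing to \cite{BHTilting}. On the filtered statement your treatment coincides with the paper's, and your explicit description of the graded map $D \to \mathbb{C}[t]$ as the orbit map $f \mapsto f(z)\,t^{\deg f}$ of the (possibly non-invariant) point $z$ — so that $t \mapsto 1$ recovers evaluation at $z$ and $t \mapsto 0$ recovers $d$ — is a correct and welcome fleshing-out of the paper's rather terse Remark \ref{ReesFibresRemark}, whose literal phrasing (``sending homogeneous generators of $D$ to powers of $t$'') only makes sense once interpreted exactly as you have done.
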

\begin{proof}
It follows from \cite[Proposition 2.9]{BHTilting} that $T_z$ is a tilting bundle on $X_z$ and $A_z:=\Hom_{X_z}(T_z,T_z) \cong A \otimes_D D/z$. By Lemma \ref{ReesRingGradedDeformations} and Remark \ref{ReesFibresRemark} $A_z$ is a filtered algebra whose associated graded algebra is isomorphic to $A_0$.
\end{proof}

\section{Application: deformations of resolutions of singularities} \label{Applications}

In this section we consider minimal resolutions of rational surface singularities and crepant resolutions of symplectic quotient singularities, and in both cases we will recall the existence of certain classes of deformations and tilting bundles in order to apply Theorem \ref{Main} or \ref{Graded Main}. In the case of rational surface singularities this will produce a new class of deformations of the reconstruction algebras of \cite{WemyssGL2} which will provide a noncommutative simultaneous resolution of the Artin component. For surface quotient singularities these algebras are graded. In the case of symplectic quotient singularities this will produce graded deformations of the skew group algebras whose fibres recover symplectic reflection algebras for certain parameters.

\subsection{Resolutions of singularities}
 We begin by recalling definitions relating to resolution of singularities.
\begin{resolution}
 Let $Y$ be a (possibly singular) variety. A smooth variety $X$ with a projective birational map $\pi: X \rightarrow Y$ that is bijective over the smooth locus of $Y$ is called a \emph{resolution} of $Y$. A resolution $X$ is a \emph{minimal resolution} of $Y$ if any other resolution factors through it. In general minimal resolutions do not exist, but they always exist for surfaces \cite[Corollary 27.3]{LipmanRationalSingularities}. If we assume that $X$ and $Y$ are normal then a resolution $X$ is a \emph{crepant resolution} of $Y$ if $\pi^* \omega_Y =\omega_{X}$, where $\omega_{X}$ and $\omega_Y$ are the canonical classes of $X$ and $Y$. In general crepant resolutions do not exist. A singularity $Y$ is \emph{rational} if for any resolution $\pi:X\rightarrow Y$
\begin{equation*}
\mathbf{R} \pi_{*} \mathcal{O}_{X} \cong \mathcal{O}_{Y}.
\end{equation*}
If this holds for one resolution it holds for all resolutions \cite[Lemma 1]{Viehweg}.

Let $V=\mathbb{C}^n$ and $G<\GL(V)$ be a finite group. The group $G<\GL(V)$ is $\emph{small}$ if it contains no non-trivial pseudo-reflections. There is a corresponding \emph{quotient singularity} $V/G$ which is affine with ring of functions
\begin{equation*}
R:=\mathbb{C}[V]^G.
\end{equation*}
This has a natural grading given by identifying $\mathbb{C}[V]=\mathbb{C}[x_1, \dots x_n]$ with each $x_i$ in grade 1, and this defines a good $\mathbb{C}^*$-action on $\Spec(R)$. Quotient singularities are also known to be rational singularities, \cite[Proposition 1]{Viehweg}.
\end{resolution}

\subsection{Rational surface singularities} We discuss minimal resolutions of rational surface singularities. We recall the existence of deformations, then of tilting bundles, and then combine these in order to apply Theorem \ref{Main} in this ungraded case. We then restrict to studying surface quotient singularities, which are examples of rational surface singularities which carry a $\mathbb{C}^*$-action as described above, and in this situation can apply Theorem \ref{Graded Main} and produce graded deformations of the reconstruction algebras. 

\subsubsection*{Rational surface singularities} \label{Deformations of Rational Surface Singularities}
We first consider when $R_0$ is a complete local, Noetherian $\mathbb{C}$-algebra such that $\Spec(R_0)$ is a rational surface singularity and $p: X_0 \rightarrow \Spec(R_0)$ its minimal resolution. We recall the existence of deformations and tilting bundles for minimal resolutions of rational surface singularities in order to apply Theorem \ref{Main}. 

A rational surface singularity is normal and hence isolated, and so $\Spec(R_0)$ has a versal deformation \cite{ExistenceofVersal}. There is a unique component of the versal deformation for which simultaneous resolutions exist after base change, the \emph{Artin component} introduced in \cite{Artin}, which is smooth \cite[Theorem 1]{WahlSimultaneous}. The simultaneous resolution of the Artin component (after base change) can be realised as the versal deformation of the minimal resolution $p :X_0 \rightarrow \Spec(R_0)$ \cite[Corollary 4.6]{Artin},  which we denote $\left(\rho:X \rightarrow \Spec \, D, j_d:X_0 \rightarrow X \right)$. This deformation $\rho$ factors through the simultaneous resolution $\pi:X \rightarrow \Spec(R)$ where $R$ is the base change of the Artin component. Indeed, there is an action of a Weyl group on $\Spec(D)$ corresponding to the Dynkin diagram configurations of the -2 curves in the exceptional divisor of $\pi_0 :X_0 \rightarrow \Spec(R_0)$, and the base change map from $\Spec(R)$ to the Artin component is induced by the quotient map for this Weyl group action \cite[Theorem 1]{WahlSimultaneous}.

We now also recall the existence of tilting bundles on the minimal resolution $X_0$. 

\begin{Reconstruction}[{\cite[Theorem 1.1, Lemma 3.6]{WemyssGL2}}] \label{Reconstruction}
There exists a tilting bundle $T_0$ on $X_0$ defined as 
\begin{equation*}
T_0 := \bigoplus_{M_i}  \frac{p^* M_i}{\text{torsion}}
\end{equation*}
where the sum is taken over the indecomposable special maximal Cohen-Macaulay modules of $R_0$.
\end{Reconstruction}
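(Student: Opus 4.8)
The plan is to verify that $T_0$ is a tilting bundle by appealing directly to the criterion recorded in Remark~\ref{RemarkTU}, since $p:X_0 \rightarrow \Spec(R_0)$ is a projective morphism of Noetherian schemes. Writing $\mathcal{M}_i := p^*M_i/\text{torsion}$ and $T_0 = \bigoplus_i \mathcal{M}_i$, it then suffices to establish three things: first, that each $\mathcal{M}_i$ is locally free, so that $T_0$ is genuinely a bundle; second, that $\Ext^k_{X_0}(T_0,T_0)=0$ for $k \neq 0$; and third, that $\mathbf{R}\Hom_{X_0}(T_0,\E) \cong 0$ forces $\E \cong 0$ for any $\E \in D^{-}(\Coh\,X_0)$.

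For local freeness I would appeal to the theory of full sheaves on rational surface singularities: $\mathcal{M}_i$ is the full sheaf attached to the maximal Cohen--Macaulay module $M_i$, and on the minimal resolution of a rational surface singularity every full sheaf is locally free (Esnault, Wunram). Equivalently, one checks that $\mathcal{M}_i$ is reflexive, which on the smooth surface $X_0$ forces it to be a vector bundle. For the $\Ext$-vanishing, since $T_0$ is locally free one has $\Ext^k_{X_0}(T_0,T_0) \cong \CH^k(X_0, \CalEnd_{X_0}(T_0))$. As $\Spec(R_0)$ is affine the Leray spectral sequence for $p$ degenerates and identifies this with the global sections of $\mathbf{R}^k p_* \CalEnd_{X_0}(T_0)$. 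The fibres of $p$ have dimension at most one, so $\mathbf{R}^k p_* = 0$ for $k \geq 2$, which disposes of the $k=2$ term at once. The remaining requirement is the vanishing $\mathbf{R}^1 p_* \SHom_{X_0}(\mathcal{M}_i,\mathcal{M}_j)=0$ for all $i,j$, and this is precisely the defining homological feature of \emph{special} maximal Cohen--Macaulay modules in the sense of Wunram and Van den Bergh; I expect this to be the technical heart of the $\Ext$-vanishing, since without specialness the $\mathbf{R}^1 p_*$ term need not vanish.

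Generation is the step I expect to be the main obstacle. I would argue that the right-orthogonal of $T_0$ in $D^{-}(\Coh\,X_0)$ is zero. The free module $R_0$ occurs among the $M_i$ (it is trivially special), so $\mathcal{O}_{X_0}$ is a summand of $T_0$; moreover Wunram's correspondence arranges the first Chern classes $c_1(\mathcal{M}_i)$ to form a basis dual to the exceptional curves $E_j$, so that $c_1(\mathcal{M}_i)\cdot E_j = \delta_{ij}$. Using this intersection data I would construct, inside the thick subcategory generated by $T_0$, the structure sheaves $\mathcal{O}_{E_j}$ of the exceptional curves and then the skyscraper sheaves $\mathcal{O}_x$ at closed points; together with $\mathcal{O}_{X_0}$ these generate $D^b(\Coh\,X_0)$. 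Then $\mathbf{R}\Hom_{X_0}(T_0,\E)\cong 0$ forces $\E$ to be right-orthogonal to a generating set, whence $\E \cong 0$.

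The delicate point, and the step requiring the full force of the geometry of the minimal resolution, is the extraction of the exceptional structure sheaves from the bundles $\mathcal{M}_i$ via their Chern-class relations; this is exactly the content underlying \cite[Lemma 3.6]{WemyssGL2}, which is why I would lean on that result (or reprove it through the dual-basis calculation sketched above) rather than attempt generation by hand. The first two conditions, by contrast, are largely formal once specialness is invoked to guarantee $\mathbf{R}^1 p_*$-acyclicity of the sheaf $\CalEnd_{X_0}(T_0)$.
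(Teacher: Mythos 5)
The paper does not actually prove this statement: it is imported verbatim from the literature, with the proof deferred entirely to \cite[Theorem 1.1, Lemma 3.6]{WemyssGL2} (which in turn rests on work of Wunram, Esnault, and Van den Bergh). So there is no internal argument to compare yours against; what you have written is a reconstruction of the cited proof, and it correctly identifies the key ingredients: local freeness of the full sheaves $p^*M_i/\text{torsion}$ (Esnault/Wunram), $\Ext$-vanishing reduced via the Leray spectral sequence and the fibre-dimension bound to an $\mathbf{R}^1p_*$ statement, and a Van den Bergh-style generation argument using the duality $c_1(\mathcal{M}_i)\cdot E_j=\delta_{ij}$, with $\mathcal{O}_{X_0}$ present as the summand coming from the free module.

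One point in your sketch is overstated and needs repair. The vanishing $\mathbf{R}^1p_*\,\SHom_{X_0}(\mathcal{M}_i,\mathcal{M}_j)=0$ for all pairs $i,j$ is \emph{not} the defining homological feature of specialness: Wunram's definition of special is $\mathbf{R}^1p_*(\mathcal{M}_i^{\vee})=0$, i.e.\ exactly the case $\mathcal{M}_j=\mathcal{O}_{X_0}$ (equivalently $\Ext^1_{R_0}(M_i,R_0)=0$, which is how the paper phrases it). The pairwise vanishing requires a further, short, argument: since the full sheaf $\mathcal{M}_j$ is generated by global sections, take $0\to K\to\mathcal{O}_{X_0}^{\oplus n}\to\mathcal{M}_j\to 0$, tensor with the locally free sheaf $\mathcal{M}_i^{\vee}$, and push forward; then $\mathbf{R}^1p_*\bigl((\mathcal{M}_i^{\vee})^{\oplus n}\bigr)=0$ by specialness and $\mathbf{R}^2p_*(K\otimes\mathcal{M}_i^{\vee})=0$ by the fibre-dimension bound, so the long exact sequence forces $\mathbf{R}^1p_*(\mathcal{M}_i^{\vee}\otimes\mathcal{M}_j)=0$. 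With that interpolation supplied, and with the generation step either carried out along the lines you sketch or simply quoted from \cite{WemyssGL2} and Van den Bergh (which is effectively what the paper does for the whole statement), your proposal is a faithful account of the proof in the cited source.
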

We recall that a maximal Cohen-Macaulay $R_0$-module $M$ is \emph{special} if $\Ext^1_{R_0}(M,R_0)=0$  \cite[Theorem 2.7]{ClassificationOfCM}.

\begin{RCA} \label{RCA}
 The \emph{reconstruction algebra} is defined as
\begin{equation*}
A_0:=\End_{X_0}(T_0).
\end{equation*}
\end{RCA}
We consider $A_0$ as a quiver with relations, which can be calculated from the representation theory of $R_0$ as $A_0$ is isomorphic to $\End_{R_0}(\oplus_i M_i)$ \cite[Lemma 3.6]{WemyssGL2}, and $p_*T_0 = \bigoplus_i M_i$ \cite[Lemma 2.2]{Esnault}.

By the above discussion we know there exists a versal deformation of $X_0$ and a tilting bundle on $X_0$ which produces the reconstruction algebra. We can now apply Theorem \ref{Main} to find deformations of the reconstruction algebra derived equivalent to the versal deformation.

\begin{RationalMain} \label{RationalMain} Let $\Spec(R_0)$ be a complete local rational surface singularity, $p:X_0 \rightarrow \Spec(R_0)$ be its minimal resolution, and $A_0=\End_{X_0}(T_0)$ its corresponding reconstruction algebra. Consider the versal deformation $(\rho:X \rightarrow \Spec\, D, j_d:X_0 \rightarrow X)$ of $X_0$. Then;
\begin{enumerate}
\item The tilting bundle $T_0$ on $X_0$ lifts to a tilting bundle $T$ on $X$. 
\item The algebra $A:=\End_{X}(T)$ is derived equivalent to the versal deformation space $X$.
\item The algebra $A$ is a deformation of $A_0$ over $D$ defined by the deformation $(X \rightarrow \Spec \, D, j_d:X_0 \rightarrow X)$. 
\end{enumerate}
\end{RationalMain}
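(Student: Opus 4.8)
The plan is to obtain this as a direct application of Theorem~\ref{Main}, so the work lies entirely in verifying that the geometric data recalled above meets the hypotheses of that theorem. First I would assemble the standing data: because $\Spec(R_0)$ is a normal, hence isolated, rational surface singularity it admits a versal deformation, whose Artin component acquires a simultaneous resolution after base change, and this simultaneous resolution is exactly the versal deformation $(\rho:X \rightarrow \Spec\,D, j_d:X_0 \rightarrow X)$ of the minimal resolution $X_0$. By the discussion preceding Theorem~\ref{Reconstruction}, $\rho$ factors as the simultaneous resolution $\pi:X \rightarrow \Spec(R)$, which is projective, followed by the morphism $\alpha:\Spec(R) \rightarrow \Spec(D)$ of affine schemes, where $\Spec(R)$ is the base change of the Artin component along the Weyl group quotient map.

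The two conditions of Theorem~\ref{Main} that still require checking are that $\alpha$ is flat and that $R \otimes_D D/d \cong R_0$. I would deduce both from the fact that the Artin component is a flat deformation of $\Spec(R_0)$: since the universal family over the Artin component is flat and flatness is preserved under the base change defining $\Spec(R)$, the affine morphism $\alpha$ is flat; and since the fibre of the Artin component over its distinguished point is the original singularity, the pullback along $j_d$ identifies the fibre of $\alpha$ over $d$ with $\Spec(R_0)$, giving $R \otimes_D D/d \cong R_0$. Combined with the existence of the tilting bundle $T_0$ on $X_0$ supplied by Theorem~\ref{Reconstruction}, every hypothesis of Theorem~\ref{Main} is now in place.

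Applying Theorem~\ref{Main} immediately gives statements (1) and (3): the tilting bundle $T_0$ lifts uniquely to a tilting bundle $T$ on $X$, and $A:=\End_X(T)$ is a $D$-algebra, flat over $D$, with an isomorphism $A \otimes_D D/d \cong A_0$ exhibiting $D \rightarrow A$ as the deformation of $A_0$ induced by the deformation of $X_0$. For statement (2) I would note that, $T$ being a tilting bundle on $X$ with $\pi:X \rightarrow \Spec(R)$ projective over a Noetherian affine base, the functor $\mathbf{R}\Hom_X(T,-)$ is an equivalence between $D(X)$ and $D(A)$ by Theorem~\ref{BHTilting}; in the present complete local setting, where $D$ need not be of finite type over $\mathbb{C}$, this equivalence may instead be extracted from Lemma~\ref{UeharaToda} and Remark~\ref{RemarkTU}, which only require the Noetherian hypothesis. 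Thus $A$ is derived equivalent to the versal deformation space $X$.

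The main obstacle is not a delicate estimate but the careful translation of the theory of Artin and Wahl into the precise factorisation hypothesis of Theorem~\ref{Main}: one must confirm that the simultaneous resolution of the Artin component really yields $\rho = \alpha \circ \pi$ with $\alpha$ a flat affine morphism and with $R \otimes_D D/d \cong R_0$. Here rationality of $\Spec(R_0)$ is the feature that controls the resolution and its deformation---in particular ensuring $\mathbf{R}p_* \mathcal{O}_{X_0} \cong \mathcal{O}_{R_0}$---so that the identification of the special fibre is compatible with the minimal resolution structure carried by $X_0$. Once this geometric input is correctly interpreted, the remainder of the argument is formal.
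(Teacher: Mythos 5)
Your proposal matches the paper's own treatment: the paper offers no separate proof for this corollary, presenting it as an immediate application of Theorem \ref{Main} once the preceding discussion has supplied the versal deformation of $X_0$ (via Artin's simultaneous resolution of the Artin component after base change, factoring as $\rho = \alpha \circ \pi$ through $\Spec(R)$) and the tilting bundle $T_0$ of Theorem \ref{Reconstruction}. Your extra verifications --- flatness of $\alpha$, the identification $R \otimes_D D/d \cong R_0$, and invoking Lemma \ref{UeharaToda} and Remark \ref{RemarkTU} rather than Theorem \ref{BHTilting} for statement (2) since the complete local base need not be of finite type over $\mathbb{C}$ --- simply make explicit what the paper leaves implicit.
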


\subsubsection*{Surface quotient singularities}
We now recall the existence of $\mathbb{C}^*$-equivariant deformations and tilting bundles for minimal resolutions of surface quotient singularities in order to apply Theorem \ref{Graded Main}. Here $G$ will be a finite, small subgroup of $\GL_2(\mathbb{C})$, we will let $R_0=\mathbb{C}[x,y]^G$ be the graded invariant ring with $x,y$ in degree one such that $\Spec(\mathbb{C}[x,y]^G)=\mathbb{C}^2/G$ is the corresponding quotient surface singularity with good $\mathbb{C}^*$-action, and we also  note that $X_0$ can be realised as $G$-$\Hilb(\mathbb{C}^2)$ by \cite{McKaycorrespondenceIshii} and this defines a $\mathbb{C}^*$-action on $X_0$ such that $p$ is $\mathbb{C}^*$-equivariant. In particular, the completion of this situation with respect to the unique $\mathbb{C}^*$-equivariant point of $\mathbb{C}[x,y]^G$ produces the complete local setting, and the methods in the appendix of \cite{NamikawaFlopsandPoisson} can recover this finite type setting from the complete local one using the good $\mathbb{C}^*$-action. 

Suppose that $\mathbb{C}^2/G$ is a surface quotient singularity. Then the versal deformations of $\mathbb{C}^2/G$ and $X_0$ are $\mathbb{C}^*$-equivariant by \cite{RimStructure}. In particular, the $\mathbb{C}^*$-action on $X_0$  lifts to a unique (up to equivariant isomorphism) $\mathbb{C}^*$-action on the versal deformation of $X_0$. We use the notation $(\rho: X \rightarrow \Spec(D),j_d:X_0 \rightarrow X)$ for this versal deformation. By the results of \cite{Brieskorn} and \cite{Laufer} a quotient singularity $\mathbb{C}^2/G$ is taut and this implies that the action on $\Spec(D)$ and $\Spec(R)$ is good, see \cite[Section 3.7]{Pinkham}. 

The following tangent space calculation allows us to deduce the dimension of the versal deformation of $X_0$ as the tangent space to $\Spec(D)$ at $d$ is $\CH^1(X_0,\mathcal{T}_0)$ where $\mathcal{T}_{X_0}$ is the tangent bundle of $X_0$.

\begin{HHSQS} \label{HHSQS} Let $X_0 \rightarrow \Spec(\mathbb{C}[x,y]^G)$ be the resolution of a surface quotient singularity with exceptional divisor $E=\sum E_i$ where $E_i \cong \mathbb{P}^1$ and $E_i^2 = -a_i$. Then
\begin{equation*} \dim \CH^j(X_0,\mathcal{T}_{X_0})= \left\{ \begin{array}{c c} \sum_i (a_i-1) & \text{ if } j=1; \\
 0 & \text{ if } j>1. \end{array} \right.
\end{equation*}
Moreover, the $R_0$-module $\CH^1(X,\mathcal{T}_{X_0})$ is graded in strictly negative degree.
\end{HHSQS}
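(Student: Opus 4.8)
The plan is to pass to higher direct images along the resolution $p:X_0 \rightarrow \Spec(R_0)$. Since $\Spec(R_0)$ is affine the Leray spectral sequence collapses and gives $\CH^j(X_0,\mathcal{T}_{X_0}) \cong \Gamma\big(\Spec(R_0),\mathbf{R}^j p_* \mathcal{T}_{X_0}\big)$ for every $j$. As $p$ is projective and birational with every fibre of dimension at most one (the generic fibre a point, the special fibre the curve $E$), the theorem on formal functions together with the bound on fibre dimension yields $\mathbf{R}^j p_* \mathcal{F} = 0$ for every coherent $\mathcal{F}$ and every $j \geq 2$. In particular $\CH^j(X_0,\mathcal{T}_{X_0}) = 0$ for $j > 1$, which settles that case, and it remains only to compute $\CH^1$, which is finite-dimensional since $\mathbf{R}^1 p_* \mathcal{T}_{X_0}$ is coherent and supported at the singular point.

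For $j=1$ I would use the logarithmic tangent sequence for the reduced exceptional divisor $E=\sum_i E_i$, which for a quotient surface singularity is a normal-crossing union of smooth rational curves. Writing $\mathcal{T}_{X_0}(-\log E)$ for the sheaf of vector fields tangent to $E$, there is a short exact sequence of $\mathbb{C}^*$-equivariant sheaves
\[
0 \rightarrow \mathcal{T}_{X_0}(-\log E) \rightarrow \mathcal{T}_{X_0} \rightarrow \bigoplus_i \mathcal{N}_{E_i/X_0} \rightarrow 0,
\]
where $\mathcal{N}_{E_i/X_0} \cong \mathcal{O}_{\mathbb{P}^1}(-a_i)$ since $E_i^2=-a_i$. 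As $a_i \geq 2$ we have $\CH^0(\mathbb{P}^1,\mathcal{O}(-a_i))=0$, and equivariant Serre duality gives $\CH^1(\mathbb{P}^1,\mathcal{O}(-a_i)) \cong \CH^0(\mathbb{P}^1,\mathcal{O}(a_i-2))^{\vee}$, of dimension $a_i-1$. Feeding these into the long exact sequence, and using $\CH^2(X_0,\mathcal{T}_{X_0}(-\log E))=0$ (again by fibre dimension), the whole computation reduces to the single vanishing $\CH^1(X_0,\mathcal{T}_{X_0}(-\log E))=0$: granting it, the two terms flanking $\CH^1(X_0,\mathcal{T}_{X_0})$ in the sequence vanish, so the middle map is an isomorphism and $\dim\CH^1(X_0,\mathcal{T}_{X_0}) = \sum_i(a_i-1)$.

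The main obstacle is exactly this vanishing $\mathbf{R}^1 p_* \mathcal{T}_{X_0}(-\log E)=0$, which asserts that the minimal resolution carries no nontrivial first-order equisingular (configuration-preserving) deformations. Over $X_0 \setminus E \cong (\mathbb{C}^2\setminus\{0\})/G$ the log tangent sheaf agrees with $\mathcal{T}_{X_0}$ and is the $G$-invariant part of $\mathcal{T}_{\mathbb{C}^2\setminus\{0\}}$; the content is that nothing survives the pushforward across $E$. I would establish this either by a dévissage reducing it to the rationality hypothesis $\mathbf{R}^1 p_*\mathcal{O}_{X_0}=0$, filtering $\mathcal{T}_{X_0}(-\log E)$ by the $\mathcal{O}_{X_0}(-nE)$-adic layers and controlling the graded quotients on the curves $E_i$ via the theorem on formal functions, or, more cleanly, by invoking Wahl's deformation-theoretic results for rational surface singularities, where the smoothness of the Artin component \cite[Theorem 1]{WahlSimultaneous} sits in the same circle of ideas. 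It is precisely here that \emph{rationality}, rather than mere normality, of the singularity is used.

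Finally, for the grading statement I would track $\mathbb{C}^*$-weights through the same sequence. Every sheaf and map above is $\mathbb{C}^*$-equivariant for the action lifting the weight-one action on $\mathbb{C}[x,y]$, so the isomorphism $\CH^1(X_0,\mathcal{T}_{X_0}) \cong \bigoplus_i \CH^1(\mathbb{P}^1,\mathcal{N}_{E_i/X_0})$ is one of graded $R_0$-modules. The equivariant identification $\CH^1(\mathbb{P}^1,\mathcal{O}(-a_i)) \cong \CH^0(\mathbb{P}^1,\mathcal{O}(a_i-2))^{\vee}$ realises each summand as the dual of a space spanned by monomials in the strictly positively weighted homogeneous coordinates on $E_i$, twisted by the weight of $\omega_{\mathbb{P}^1}$; tracking the fixed-point weights of the equivariant structure on $\mathcal{N}_{E_i/X_0}$ then shows every resulting weight is strictly negative. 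This is consistent with $\CH^1(X_0,\mathcal{T}_{X_0})$ being the tangent space at the invariant point to the base $\Spec(D)$ of the versal deformation, whose good $\mathbb{C}^*$-action has strictly positive weights on its maximal ideal, so that the dual tangent space is strictly negatively graded.
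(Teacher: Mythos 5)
Your reduction is correct as far as it goes, and for $j>1$ it coincides with the paper's argument (fibre dimension $\le 1$ plus an affine base). The logarithmic tangent sequence
\[
0 \rightarrow \mathcal{T}_{X_0}(-\log E) \rightarrow \mathcal{T}_{X_0} \rightarrow \bigoplus_i \mathcal{N}_{E_i/X_0} \rightarrow 0
\]
is also the standard route to the dimension formula, and your bookkeeping correctly shows that the statement is \emph{equivalent} to the vanishing $\CH^1(X_0,\mathcal{T}_{X_0}(-\log E))=0$. The gap is in how you justify that vanishing. It does not follow from rationality, so the d\'evissage to $\mathbf{R}^1p_*\mathcal{O}_{X_0}=0$ that you propose first cannot work: the paper's own non-quotient example (a rational surface singularity whose minimal resolution has a central $-4$ curve meeting four $-2$ curves) carries a cross-ratio modulus $\lambda$, and varying $\lambda$ deforms the pair $(X_0,E)$ nontrivially while preserving the exceptional configuration, so $\CH^1(X_0,\mathcal{T}_{X_0}(-\log E))\neq 0$ there and, by your own exact sequence, $\dim\CH^1(X_0,\mathcal{T}_{X_0})>\sum_i(a_i-1)$. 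Your fallback is not the right input either: Wahl's smoothness of the Artin component \cite[Theorem 1]{WahlSimultaneous} holds for \emph{every} rational surface singularity, including the example just given, so it cannot imply the vanishing. What the vanishing actually expresses is the absence of equisingular moduli, i.e.\ tautness, and the fact that quotient surface singularities are taut is precisely the theorem of Brieskorn \cite{Brieskorn} and Laufer \cite{Laufer}. This, not rationality, is the hypothesis doing the work; it is also why the paper's proof is essentially a citation: the dimension count is \cite[Equation 5]{BehnkeRiemenschneider}, valid because quotient singularities are taut, and the negative grading is \cite[Section 3.7]{Pinkham}, again a consequence of tautness.

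The grading argument has the same weakness in your version. The weight computation on $\bigoplus_i\CH^1(E_i,\mathcal{N}_{E_i/X_0})$ is asserted rather than carried out, and it is not formal: for a star-shaped resolution the central curve is pointwise $\mathbb{C}^*$-fixed (it contains at least three fixed points, namely its intersections with the invariant arms), so the weights on its $\CH^1$ are governed entirely by the equivariant structure of its normal bundle, which must be extracted from the geometry of the resolution rather than from generalities about $\omega_{\mathbb{P}^1}$. Moreover your closing consistency check is circular: in the paper the goodness of the $\mathbb{C}^*$-action on the versal base $\Spec(D)$ is itself deduced from tautness via \cite[Section 3.7]{Pinkham}, so it cannot serve as independent confirmation that the weights on $\CH^1(X_0,\mathcal{T}_{X_0})$ are strictly negative.
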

\begin{proof}
We note that as the resolution has fibres of dimension $\le 1$  $\dim \CH^j(X_0,\mathcal{T}_{X_0})=0$ for $j>1$ by \cite[Proposition 5.2.34]{AG and arithmetic curves}. Then $\dim \CH^1(X_0,\mathcal{T}_{X_0})= \sum_i (a_i-1)$ by \cite[Equation 5]{BehnkeRiemenschneider} as it is known that all surface quotient singularities are taut,  \cite{Brieskorn} and \cite{Laufer}. The fact that $\CH^1(X_0,\mathcal{T}_{X_0})$ is graded in strictly negative degree also follows from tautness \cite[Section 3.7]{Pinkham}.
\end{proof}

We now note the existence of $\mathbb{C}^*$-equivariant tilting bundles on the minimal resolution. Theorem \ref{Reconstruction} produces a tilting bundle on the completion of $X$, and this tilting bundle can be given a $\mathbb{C}^*$-equivariant structure by considering the maximal Cohen-Macaulay modules as graded submodules of $\mathbb{C}[x,y]$, hence by Proposition \ref{GradedTiltingLift} this lifts to a $\mathbb{C}^*$-equivariant tilting bundle on $X$.

\begin{GradedReconstruction} \label{GradedReconstruction}
There exists a $\mathbb{C}^*$-equivariant tilting bundle $T_0$ on $X_0$ defined as 
\begin{equation*}
T_0 := \bigoplus_{M_i}  \frac{\pi^* M_i}{\text{torsion}}
\end{equation*}
where the sum is taken over the indecomposable graded special maximal Cohen-Macaulay modules of $\mathbb{C}[x,y]^G$ that are graded submodules of $\mathbb{C}[x,y]$. 
\end{GradedReconstruction}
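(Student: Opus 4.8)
The plan is to obtain $T_0$ by equipping the tilting bundle of Theorem \ref{Reconstruction} with a $\mathbb{C}^*$-equivariant structure in the complete local setting and then transporting it to the finite type scheme $X_0$ via Proposition \ref{GradedTiltingLift}. Write $\frak{R}_0$ for the completion of $R_0=\mathbb{C}[x,y]^G$ at the maximal ideal of the $\mathbb{C}^*$-fixed point, let $\frak{X}_0 \to \Spec(\frak{R}_0)$ be the induced complete local minimal resolution, and let $\zeta:\frak{X}_0 \to X_0$ denote the inclusion of this fibre. Theorem \ref{Reconstruction} already supplies a tilting bundle $\frak{T}_0$ on $\frak{X}_0$ built from the indecomposable special maximal Cohen--Macaulay modules of $\frak{R}_0$, so the only genuinely new content is (i) producing a $\mathbb{C}^*$-equivariant structure and (ii) checking that the equivariant lift it determines coincides with the stated explicit formula.

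First I would construct the equivariant structure. Each indecomposable special maximal Cohen--Macaulay $R_0$-module can be realised as a graded $R_0$-submodule $M_i$ of $\mathbb{C}[x,y]$: the maximal Cohen--Macaulay modules over the quotient singularity are of the form $(\mathbb{C}[x,y]\otimes \rho)^G$ for representations $\rho$ of $G$, so they inherit the grading of $\mathbb{C}[x,y]$, and the special ones form a graded subcollection. By Remark \ref{GradingActionRemark}(2) this grading is precisely a $\mathbb{C}^*$-equivariant structure on the coherent sheaf $M_i$ on $\Spec(R_0)$. Because $\pi:X_0 \to \Spec(R_0)$ is $\mathbb{C}^*$-equivariant, the pullback $\pi^* M_i$ is $\mathbb{C}^*$-equivariant by Lemma \ref{Action Lemma}(3); the torsion subsheaf is canonically defined and hence preserved by this structure, so the quotient $\pi^* M_i/\text{torsion}$, and therefore $T_0:=\bigoplus_i \pi^* M_i/\text{torsion}$, is a $\mathbb{C}^*$-equivariant locally free coherent sheaf on $X_0$.

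It remains to see that $T_0$ is tilting. Since completion commutes with $\pi^*$ and with passage to the torsion-free quotient, the restriction $\zeta^* T_0$ is $\bigoplus_i \zeta^*\pi^* M_i/\text{torsion}\cong \frak{T}_0$, the tilting bundle of Theorem \ref{Reconstruction}, and the gradings on the $M_i$ make $\frak{T}_0$ a $\mathbb{C}^*$-equivariant tilting bundle on $\frak{X}_0$. As $X_0$ is smooth, hence normal, and the $\mathbb{C}^*$-action on $\Spec(R_0)$ is good, Proposition \ref{GradedTiltingLift} applies and shows that $\frak{T}_0$ lifts \emph{uniquely} to a $\mathbb{C}^*$-equivariant tilting bundle on $X_0$. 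Since $T_0$ is a $\mathbb{C}^*$-equivariant bundle on $X_0$ whose restriction to $\frak{X}_0$ is $\frak{T}_0$, the uniqueness clause forces $T_0$ to be this lift, so $T_0$ is the required $\mathbb{C}^*$-equivariant tilting bundle. I expect the main subtlety to lie in the first step, namely pinning down that the chosen special Cohen--Macaulay modules genuinely occur as graded submodules of $\mathbb{C}[x,y]$ in a way compatible with completion, so that $\zeta^* T_0 \cong \frak{T}_0$ holds on the nose rather than merely after forgetting the grading; once that compatibility is in hand, the transport and the uniqueness identification in the final step are immediate from Proposition \ref{GradedTiltingLift}.
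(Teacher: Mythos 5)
Your proposal is correct and follows essentially the same route as the paper: the paper likewise takes the complete local tilting bundle of Theorem \ref{Reconstruction}, endows it with a $\mathbb{C}^*$-equivariant structure by viewing the special maximal Cohen--Macaulay modules as graded submodules of $\mathbb{C}[x,y]$, and then invokes Proposition \ref{GradedTiltingLift} to transfer it to $X_0$. Your only addition is the explicit use of the uniqueness clause of Proposition \ref{GradedTiltingLift} to identify that lift with the stated formula $\bigoplus_i \pi^*M_i/\text{torsion}$, a point the paper leaves implicit.
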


\begin{GradedReconstructionDefinition}
We define the \emph{(McKay) graded} reconstruction algebra to be 
\begin{equation*}
A_0:=\Hom_{X_0}(T_0,T_0).
\end{equation*}

\end{GradedReconstructionDefinition}
As a graded algebra $A_0 \cong \End_{\mathbb{C}[x,y]^G} \left( \bigoplus M_i \right)$, where the sum is taken over the indecomposable special maximal Cohen-Macaulay modules that are graded submodules of $\mathbb{C}[x,y]$.

The vertices of the McKay quiver correspond to indecomposable maximal Cohen-Macaulay modules and the graded reconstruction algebra is the full subquiver consisting of vertices corresponding to special maximal Cohen-Macaulay modules (after some choice has been made to identify representations with MCM $\mathbb{C}[x,y]^G$-modules). As such $A_0$ has a positive grading induced from path length in the McKay quiver. This description of the reconstruction algebras is given in \cite[Section 4]{WemyssGL2}.

\begin{NonUniqueGradingRemark}
This choice of grading is not unique, but is motivated by the standard grading on $\mathbb{C}[x,y]$ and so the $\mathbb{C}^*$-action on $\mathbb{C}^2/G$. Consider the simplest case of a type $A_1$ surface singularity. Then $\mathbb{C}[x,y]^{\frac{1}{2}(1,1)}=\mathbb{C}[A,B,C]/(AB-C^2)$. The two indecomposable maximal Cohen Macaulay modules are $(1)$ and $(A,C)$. We note that our choice of grading is such that the module $(1)$ is generated in degree 0 and $(A,C)\cong (x,y)$ is generated in degree 1. This is not the same grading as if we were to consider $(A,C)$ as a graded ideal of $\mathbb{C}[A,B,C]/(AB-C^2)$.
\end{NonUniqueGradingRemark}

By the discussions above we know there exist $\mathbb{C}^*$-deformations of $X_0$ and $\mathbb{C}^*$-equivariant tilting bundles on $X_0$ which produce the graded reconstruction algebras. We can now apply Theorem \ref{Graded Main} to find graded deformations of the graded reconstruction algebras.

\begin{GL2Main} \label{GL2Main} Let $G$ be a small, finite subgroup of $\GL_2(\mathbb{C})$, let $\mathbb{C}^2/G$ be the corresponding quotient singularity, let $p:X_0 \rightarrow \mathbb{C}^2/G$ be the minimal resolution, and let $A_0=\End_{X_0}(T_0)$ the corresponding graded reconstruction algebra, all with $\mathbb{C}^*$-actions and gradings as discussed above. Consider the versal deformation $(\rho: X \rightarrow \Spec \, D, j_d:X_0 \rightarrow X)$ of $X_0$ with its $\mathbb{C}^*$-equivariant structure. Then:
\begin{enumerate}
\item The $\mathbb{C}^*$-equivariant tilting bundle $T_0$ on $X_0$ lifts to a $\mathbb{C}^*$-equivariant tilting bundle $T$ on $X$.
\item The algebra $A:=\End_X(T)$ is derived equivalent to the versal deformation total space $X$.
\item The algebra $A$ is a graded deformation of $A_0$ over $D$ induced by the $\mathbb{C}^*$-equivariant versal deformation $(\rho: X \rightarrow \Spec \,D, j_d:X_0 \rightarrow X)$.
\item For any closed point $z \in \Spec(D)$ the fibres $A_z:=A \otimes_D (D/z)$ are filtered algebras with the property that the associated graded algebra of $A_z$ is isomorphism to $A_0$.
\end{enumerate}
\end{GL2Main}
Hence we have constructed graded deformations of the reconstruction algebra. In the special case when $G \subset \SL_2(\mathbb{C})$ the minimal resolutions are crepant, the reconstruction algebra is the preprojective algebra, and the fibres found are the deformed preprojective algebras of \cite{CBH} which are module-finite over their centres.
\subsection{Example: deformations of reconstruction algebras of type $\mathbb{A}$}
In this section we briefly recall the definition of the reconstruction algebras of type $\mathbb{A}$. We then apply the above results to calculate the deformations of the reconstruction algebras corresponding to the Artin component, and calculate some explicit examples. 

Reconstruction algebras of type $\mathbb{A}$ are those arising from tilting bundles on the minimal resolutions of quotient singularities for the finite groups
\begin{equation*}
\frac{1}{r}(1,a):= \left\langle \left( \begin{array}{c c} \varepsilon_r & 0 \\ 0 & \varepsilon^a_r \end{array} \right) \right \rangle < \GL_2(\mathbb{C})
\end{equation*} 
 with $\varepsilon_r$ a primitive $r^{th}$ root of unity and $a$ coprime to $r$. In this case the sequence of self intersection numbers of the exceptional divisor can be calculated from the Hirzebruch-Jung continued fraction $r/a=[a_1,a_2,\dots, a_k]$ and the sequence $r/(r-a)=[b_1,b_2, \dots,b_l]$ can be used to identify $\mathbb{C}[x,y]^G$ with the polynomial ring $\mathbb{C}[Z_0, \dots Z_{l+1}]$ modulo the relations
\begin{equation*}
Z_i Z_{j+1} = Z_{i+1} \left( \prod_{\beta=i+1}^{j}  Z_{\beta}^{b_{\beta}-2} \right)  Z_j \quad \text{for}\quad 0 \le i <j \le l.
\end{equation*}
This set of relations can be visualised as the $2$ by $2$ quasi-determinants of the matrix
\begin{equation*}
\left( \begin{array}{c c c c c c c} Z_0 & & Z_1  & & \dots & & Z_{l} \\
 & Z_1^{b_1-2} & &  Z_2^{b_2-2} & & Z_{l}^{b_{l}-2} & \\
Z_1 & & Z_2 & & \dots & & Z_{l+1}
\end{array} 
\right)
\end{equation*}
where the 2 by 2 quasi-determinant for columns $i$ and $j$ is exactly the relation outlined above, see \cite[Satz 2,3]{RiemenschneiderNach}.

The special indecomposable maximal Cohen-Macaulay modules can be described as the trivial module $(1)=\mathbb{C}[x,y]^G$ and $\left(Z_i, Z_{i+1}^k \right)$ for $0\le i \le l$ and $1 \le k \le b_{i+1}-1$, where we note the isomorphism between the modules
\begin{equation*}
\left( Z_i,Z_{i+1}^{b_{i+1}-1} \right) \xrightarrow{\phi_i} \Big( Z_{i+1},Z_{i+2} \Big) 
\end{equation*}
defined by multiplication by $Z_{i+1}/Z_i=Z_{i+2}/Z_{i+1}^{b_i-1}$, see \cite[Appendix A1]{WunramReflexiveModules}. We note that the grading on these modules is determined by identifying them as submodules of $\mathbb{C}[x,y]$. The reconstruction algebra is then the endomorphism ring of the sum of these modules, 
\begin{equation*} A_0=\End_{\mathbb{C}[x.y]^G} \left( \left(1\right) \oplus\bigoplus_{i=0}^{l} \bigoplus_{k=1}^{b_{i+1}-2} \left(Z_i, Z_{i+1}^k \right) \right).
\end{equation*} We refer the reader to \cite{RCAA} for a more detailed description of these algebra, including an explicit presentation as the path algebra of a quiver with relations.

\begin{Reconstruction Algebra Example 12,7}[$G=\frac{1}{12}(1,7)$]
In this case the Hirzebruch-Jung continued fractions are $12/7=[2,4,2]$ and $12/5=[3,2,3]$, and the ring $\mathbb{C}[x,y]^G$ is generated by $Z_0,Z_1,Z_2,Z_3,Z_4$ subject to the relations given by the 2 by 2 quasi-determinants of
\begin{equation*}
\left( \begin{array}{c c c c c c c} Z_0 & & Z_1  & & Z_2 & & Z_3 \\
 & Z_1 & &   & & Z_{3} & \\
Z_1 & & Z_2 & & Z_3 & & Z_{4}
\end{array} 
\right).
\end{equation*}
There are 4 indecomposable special maximal Cohen Macaulay modules, $M_0:=(1)$, $M_1:=(Z_0,Z_1) \cong (x^7,y)$, $M_2:=(Z_0,Z_1^2)\cong (Z_1,Z_2) \cong (Z_2,Z_3) \cong (x^2,y^2)$, $M_3:=(Z_2,Z_3^2) \cong(Z_3,Z_4) \cong (x,y^7)$. This algebra can be presented as the following quiver with relations
\begin{align*}
\begin{tikzpicture}[scale =0.75]
\node (C0) at (0,0)  {$M_0$};
\node (C2) at (5,0)  {$M_2$};
\node (C1) at (2.5,2.5)  {$M_1$};
\node (C3) at (2.5,-2.5)  {$M_3$};
\draw [->,bend left=40] (C0) to node[gap]  {$\scriptstyle{Z_1}$} (C1);
\draw [->,bend left=40] (C1) to node[gap]  {$\scriptstyle{Z_1}$} (C2);
\draw [->,bend left=40] (C2) to node[right]  {$\scriptstyle{Z_3^2/Z_1^2=Z_2Z_3/Z_0}$} (C3);
\draw [->,bend left=40] (C3) to node[left]  {$\scriptstyle{Z_3/Z_2=Z_4/Z_3^2}$} (C0);
\draw [->] (C1) to node[gap]  {$\scriptstyle{inc}$} (C0);
\draw [->] (C2) to node[gap]  {$\scriptstyle{inc}$} (C1);
\draw [->] (C3) to node[gap]  {$\scriptstyle{Z_0/Z_2=Z_1^2/Z_3}$} (C2);
\draw [->] (C0) to node[gap]  {$\scriptstyle{Z_2}$} (C3);
\draw [->,bend right=15] (C2) to node[gap]  {$\scriptstyle{Z_1/Z_0=Z_2/Z_1^2}$} (C0);
\draw [->,bend left=15] (C2) to node[gap]  {$\scriptstyle{Z_2/Z_0=Z_3/Z_1^2}$} (C0);
\end{tikzpicture}
\end{align*}
where the relations are those implied by the labelling.
\end{Reconstruction Algebra Example 12,7}

We now outline how to calculate the deformation of reconstruction algebras of type $\mathbb{A}$ corresponding to the simultaneous resolution of the Artin component. The Artin component of the versal deformation space (after base change) of a cyclic quotient singularity can be realised as the polynomial algebra generated by $Z_i^{(j)}$ for $0\le i \le l+1$ and $1 \le j \le b_i$ subject the relations given as  2 by 2 quasi-determinants of the matrix
\begin{equation*}
\left( \begin{array}{c c c c c c c} Z^{(1)}_0 & & Z^{(b_1)}_1  & & \dots & & Z^{(b_l)}_{l} \\
 & \prod_{1<j<b_1} Z_1^{(j)} & &  \prod_{1<j<b_2} Z_2^{(j)}  & & \prod_{1<j<b_l} Z_1^{(j)}  & \\
Z_1^{(1)} & & Z_2^{(1)} & & \dots & & Z^{(1)}_{l+1}
\end{array} 
\right).
\end{equation*}
We denote this algebra by $R$, and it is described in \cite[Satz 7 and Section 7]{RiemenschneiderNach}. This algebra carries an action of the Weyl group $S_{b_1-1} \times S_{b_2-2} \times S_{b_3-2} \times \dots \times S_{b_l -1}$ where the $i^{th}$ symmetric group in the product acts by permuting the $b_{i}-2$ generators $Z_i^{(j)}$ for $1<j<b_i$ listed in the central row, with the first and last symmetric group also acting  on $Z_1^{(1)}$ and $Z_l^{(b_l)}$ respectively.

\begin{ExplicitCalculationRemark} We note that to explicitly calculate $A$ it is sufficient to work on $R$ as by flat base change $\pi_* T$ must correspond to some lift $\bigoplus M_i'$ of $\bigoplus M_i$ from $R_0$ to $R$, and then $A \cong \End_R(\bigoplus M_i')$ as in the case of the reconstruction algebra.
\end{ExplicitCalculationRemark}

The $\mathbb{C}[x,y]^G$-module $\left(Z_i, Z_{i+1}^k \right)$ lifts to an $R$-module $\left( Z_i^{(b_i)}, Z_{i+1}^{(1)} \dots Z_{i+1}^{(k)} \right)$  for $0\le i \le l$ and $1 \le k \le b_{i+1}-1$, and there are still isomorphisms
\begin{equation*}
\left( Z_i^{(b_i)}, Z_{i+1}^{(1)} \dots Z_{i+1}^{(b_{i+1}-1)} \right) \xrightarrow{\phi_i} \left( Z^{(b_i+1)}_{i+1},Z^{(1)}_{i+2} \right) 
\end{equation*}
defined by multiplication by $Z^{(b_{i+1})}_{i+1}/Z_i^{(b_i)}=Z^{(1)}_{i+2}/Z_{i+1}^{(1)} \dots Z_{i+1}^{(b_{i+1}-1)}$. These modules are graded in the same degree as the modules they lift. Then the deformed graded reconstruction algebra can be described as
\begin{equation*} A=\End_{R} \left( \left(1\right) \oplus\bigoplus_{i=0}^{l} \bigoplus_{k=1}^{b_{i+1}-2}\left( Z_i^{(b_i)}, Z_{i+1}^{(1)} \dots Z_{i+1}^{(k)} \right) \right).
\end{equation*} 

\begin{RCAExample}($G=\frac{1}{12}(1,7)$)
We continue with the previous example. The graded deformation of the singularity after base change can be viewed as $\mathbb{C}[Z_i^{(j)}]$ subject to relations defined by the 2 by 2 quasi-determinants of 
\begin{equation*}
\left( \begin{array}{c c c c c c c} Z^{(1)}_0 & & Z^{(3)}_1  & & Z^{(2)}_2 & & Z^{(3)}_3 \\
 & Z^{(2)}_1 & & & & Z_{3}^{(2)} & \\
Z^{(1)}_1 & & Z^{(1)}_2 & & Z^{(1)}_3 & & Z^{(1)}_{4}
\end{array} 
\right).
\end{equation*}

In this case the modules corresponding to $\pi_*T$ are $M'_0=(1)$, $M'_1=(Z_0^{(1)},Z_1^{(1)})$, $M'_2=(Z_0^{(1)},Z_1^{(1)}Z_1^{(2)}) \cong (Z_1^{(3)},Z_2^{(1)})\cong (Z_2^{(2)},Z_3^{(1)})$, and $M'_3 := (Z_2^{(2)},Z_3^{(1)}Z_3^{(2)}) \cong (Z_3^{(3)},Z_4^{(1)})$, which are generated in the same degrees as the modules they restrict to in the undeformed case. Then in this case the deformed reconstruction algebra can be realised as the path algebra of the quiver, 
\begin{align*}
\begin{tikzpicture}[scale =0.85]
\node (C0) at (0,0)  {$M_0'\scriptstyle{(Z_1^2,Z_3^2)}$};
\node (C2) at (6,0)  {$M_3'\scriptstyle{(Z_3^3,Z_1^1)}$};
\node (C1) at (3,2.5)  {$M_1'\scriptstyle{(Z_2^1,Z_3^1,Z_1^1,Z_1^2)}$};
\node (C3) at (3,-2.5)  {$M_2'\scriptstyle{(Z_1^1,Z_1^2,Z_2^1,Z_3^1)}$};
\draw [->,bend left=40] (C0) to node[gap]  {$\scriptstyle{Z_1^{1}}$} (C1);
\draw [->,bend left=40] (C1) to node[gap]  {$\scriptstyle{Z_1^{2}}$} (C2);
\draw [->,bend left=40] (C2) to node[right]  {$\scriptstyle{Z_3^{1}Z_3^{2}/Z_1^{1}Z_1^{2}=Z_2^{2}Z_3^{2}/Z_0^{1}}$} (C3);
\draw [->,bend left=40] (C3) to node[left]  {$\scriptstyle{Z_3^{3}/Z_2^{2}=Z_4^{1}/Z_3^{1}Z_3^{2}}$} (C0);
\draw [->] (C1) to node[gap]  {$\scriptstyle{inc}$} (C0);
\draw [->] (C2) to node[gap]  {$\scriptstyle{inc}$} (C1);
\draw [->] (C3) to node[gap]  {$\scriptstyle{Z_0^{1}/Z_2^{2}=Z_1^{1}Z_1^{2}/Z_3^{1}}$} (C2);
\draw [->] (C0) to node[gap]  {$\scriptstyle{Z_2^{2}}$} (C3);
\draw [->,bend right=15] (C2) to node[gap]  {$\scriptstyle{Z_1^{3}/Z_0^{1}=Z_2^{1}/Z_1^{1}Z_1^{2}}$} (C0);
\draw [->,bend left=15] (C2) to node[gap]  {$\scriptstyle{Z_2^{2}/Z_0^{1}=Z_3^{1}/Z^{1}_1 Z_1^{2}}$} (C0);
\end{tikzpicture}
\end{align*}
with relations those implied by the labelling, where we have suppressed the upper parenthesis in the labelling and denoted labelled loops at a node by labels in parenthesis following the node in order to simplify the diagram.
\end{RCAExample}

\begin{ExampleSimpleTypeA}($G=\frac{1}{r}(1,1)$) As another example we consider the family of graded reconstruction algebras arising from the finite groups $\frac{1}{r}(1,1)$. In this case the Hirzebruch-Jung continued fractions are $r/1=[r]$ and $r/(r-1)=[2,2 \dots, 2]$. The invariant ring $\mathbb{C}[x,y]^{\frac{1}{r}(1,1)}=\mathbb{C}[x^r, x^{r-1}y, \dots , xy^{r-1},y^r]$ is isomorphic to the polynomial algebra $\mathbb{C}[Z_0, \dots, Z_{r}]$ subject to the relations given by all $2 \times 2$ minors of the matrix
\[
\left( \begin{array}{c c c c c}
Z_0 & Z_1 & \dots & Z_{r-2} & Z_{r-1} \\
Z_1 & Z_2 & \dots & Z_{r-1} & Z_{r} 
\end{array} \right).
\]
This is a graded algebra with $Z_i$ in degree $r$ and homogeneous relations generated in degree $2r$. In this case there are two indecomposable special maximal Cohen-Macaulay modules, $M_0=(1)$ and $M_1=(Z_0,Z_1) \cong (Z_i,Z_{i+1}) \cong (x,y)$. We note that these are graded by $M_0$ being generated in degree 0 and $M_1$ being generated in degree $1$. Then the reconstruction algebra can be described as the following quiver  \begin{align*}
\begin{tikzpicture} [bend angle=45, looseness=1]
\node (C1) at (0,0)  {$M_0$};
\node (C2) at (5,0)  {$M_1$};
\node (C3) at (2.5,-0.45)  {$\vdots$};
\node (C3) at (2.5,-1.27)  {$\vdots$};
\draw [->,bend left] (C1) to node[gap]  {\scriptsize{$Z_0$}} (C2);
\draw [->,bend left=25] (C1) to node[gap] {\scriptsize{$Z_1$}} (C2);
\draw [->,bend left=90] (C2) to node[gap]  {\scriptsize{$(Z_{r-1}/Z_0) =(Z_{r}/Z_1)$}} (C1);
\draw [->,bend left=5] (C2) to node[gap] {\scriptsize{$inc$}} (C1);
\draw [->,bend left=35] (C2) to node[gap] {\scriptsize{$(Z_i/Z_0)=(Z_{i+1}/Z_1)$}} (C1);
\end{tikzpicture}
\end{align*}
with relations implied by the labelling, and where the arrows from $0$ to $1$ have degree 1, the arrows from $1$ to $0$ have degree $r-1$, and the relations have homogeneous generators in degree $r$. 

The Artin component of the versal deformation of the singularity (after base change) is defined by $\mathbb{C}[Z^{(1)}_0,Z_1^{(1)},Z_1^{(2)}, \dots, Z^{(1)}_{r}]$ subject to the relations given by all $2 \times 2$ minors of the matrix
\[
\left( \begin{array}{c c c c c}
Z^{(1)}_0 & Z^{(2)}_1 & \dots & Z^{(2)}_{r-2} & Z^{(2)}_{r-1} \\
Z^{(1)}_1 & Z^{(1)}_2 & \dots & Z^{(1)}_{r-1} & Z^{(1)}_{r} 
\end{array} \right).
\]
The deformed reconstruction algebra  is then the endomorphism algebra of the two modules $M_0'=(1)$ and $M_1'=\left(Z_0^{(1)},Z_1^{(1)}\right)$ which lift $M_0$ and $M_1$, and $A$ can be described as the path algebra of the following quiver
\begin{center}
 \begin{align*}
\begin{tikzpicture} [bend angle=45, looseness=1]
\node (C1) at (0,0)  {$M_0'$};
\node (C2) at (5,0)  {$M_1'$};
\node (C3) at (2.5,-0.33)  {$\vdots$};
\node (C3) at (2.5,-1.22)  {$\vdots$};
\draw [->,bend left] (C1) to node[gap]  {\scriptsize{$Z^{(1)}_0$}} (C2);
\draw [->,bend left=25] (C1) to node[gap] {\scriptsize{$Z^{(1)}_1$}} (C2);
\draw [->,bend left=90] (C2) to node[gap]  {\scriptsize{$(Z^{(2)}_{r-1}/Z^{(1)}_0) =(Z^{(1)}_r/Z^{(1)}_1)$}} (C1);
\draw [->,bend left=3] (C2) to node[gap] {\scriptsize{$inc$}} (C1);
\draw [->,bend left=32] (C2) to node[gap] {\scriptsize{$(Z^{(2)}_{i}/Z^{(1)}_0) =(Z^{(1)}_{i+1}/Z^{(1)}_1)$}} (C1);
\end{tikzpicture}
\end{align*}
\end{center}
with relations implied by the labelling. We note that the arrows from $0$ to $1$ still have degree 1 and  the arrows from $1$ to $0$ still have degree $r-1$ , but the relations now have homogeneous generators in degree $r+1$.
\end{ExampleSimpleTypeA}

When $r=2$ this deformed reconstruction algebra is the conifold quiver. More generally the versal  deformation of the minimal resolution can be realised as the total bundle of $\bigoplus_{i=1}^{r} \mathcal{O}_{\mathbb{P}^1}(-1)$ and the tilting bundle defining the deformed reconstruction algebra can be lifted from the tilting bundle $\mathcal{O}_{\mathbb{P}^1} \oplus \mathcal{O}_{\mathbb{P}^1}(1)$ on  $\mathbb{P}^1$. In this situation the calculations could be carried out explicitly on the minimal resolution rather than the base singularity.

\subsection{Examples: outside of type $\mathbb{A}$}
We calculate two explicit examples outside of the type $\mathbb{A}$ case; one example corresponding to a type $\mathbb{D}$ quotient singularity and one example corresponding to a non-quotient rational surface singularity. As we work outside of the quotient singularity setting in these examples we will work complete locally.

\begin{TypeDExample}(Type $\mathbb{D}_{5,2}$)
Consider the non-abelian group of type $\mathbb{D}_{5,2}$
\begin{equation*}
G:= \left\langle \left( \begin{array}{c c} \varepsilon_4 & 0 \\ 0 & \varepsilon_4^{-1} \end{array} \right) , \left( \begin{array}{c c} 0 &\varepsilon_4 \\ \varepsilon_4 & 0 \end{array} \right), \left( \begin{array}{c c} 0 &\varepsilon_6 \\ \varepsilon_6 & 0 \end{array} \right) \right \rangle
\end{equation*}
which is the direct product of a cyclic group of order 3 with the quaternion group of order 8. In this case the corresponding singularity $\mathbb{C}[[x,y]]^G$ is isomorphic to the polynomial ring $\mathbb{C}[[t,X_1,X_2,X_3,Y_1,Y_2,Y_3]]$ subject to the relations generated by the 2 by 2 minors of the matrix
\[
\left( \begin{array}{c c c }
X_1 & X_2 & X_3  \\
Y_1 & Y_2 & Y_3
\end{array} \right),
\]
$X_1=t^2$, $Y_2=t^2$, and $X_3-Y_3=t^2$, see \cite[Corollary 3.6]{WahlEqnsRational}. The minimal resolution of this singularity has an exceptional divisor which consists of a central -3 curve intersecting 3 outer -2 curves, and so has a dual graph of Dynkin type $\mathbb{D}_4$.

The special MCM modules can be described as $M_0=(1)$, $M_+=(X_2,t)$, $M_-=(t,Y_1)$, $M_1=(X_3,Y_3) \cong (X_2,Y_2) \cong (X_1,Y_1)$, and $M_2=(X_3,Y_3,t) $, and then the reconstruction algebra can be presented as the completion of the path algebra of the following quiver
\begin{center}
\begin{tikzpicture} [bend angle=18, looseness=1, scale=1]
\node (C1) at (0,0)  {$M_0$};
\node (C2) at (3,-2)  {$M_+$};
\node (C3) at (3,0){$M_2$};
\node (C4) at (3,2) {$M_-$};
\node (C5) at (6,0) {$M_1$};
\draw [->,bend left=0] (C1) to node[gap]  {\scriptsize{$t$}} (C2);
\draw [->,bend left=45] (C2) to node[gap]  {\scriptsize{$inc$}} (C1);
\draw [->,bend left=0] (C2) to node[gap]  {\scriptsize{$Y_3/t=tX_3/X_2$}} (C5);
\draw [->,bend left=45] (C5) to node[gap]  {\scriptsize{$X_2/X_3=t^2/Y_3$}} (C2);
\draw [->,bend left=7] (C1) to node[gap]  {\scriptsize{$t$}} (C3);
\draw [->,bend left=7] (C3) to node[gap]  {\scriptsize{$inc$}} (C1);
\draw [->,bend left=7] (C3) to node[gap]  {\scriptsize{$t$}} (C5);
\draw [->,bend left=7] (C5) to node[gap]  {\scriptsize{$inc$}} (C3);
\draw [->,bend left=45] (C1) to node[gap]  {\scriptsize{$t$}} (C4);
\draw [->,bend left=0] (C4) to node[gap]  {\scriptsize{$inc$}} (C1);
\draw [->,bend left=45] (C4) to node[gap]  {\scriptsize{$X_3/t=tY_3/Y_2$}} (C5);
\draw [->,bend left=0] (C5) to node[gap]  {\scriptsize{$t^2/X_3=Y_1/Y_3$}} (C4);
\end{tikzpicture}
\end{center}
with relations those implied by the labelling. See \cite{RCAD1} for a more complete description of reconstruction algebras of this type.

The Artin component of the versal deformation of the singularity can be described (after base change) as the polynomial ring $\mathbb{C}[[t_1,t_1',t_2,t_2',t_3,t_3',X_1,X_2,X_3,Y_1,Y_2,Y_3]]$ subject to the relations generated by the 2 by 2 minors of the matrix
\[
\left( \begin{array}{c c c }
X_1 & X_2 & X_3 \\
Y_1 & Y_2 & Y_3
\end{array} \right),
\]
$X_1=t_1t_1'$, $Y_2=t_2t_2'$, and $X_3-Y_3=t_3t_3'$, see \cite[Theorem 2]{WahlSimultaneous}. The action of the product of Weyl groups $S_2 \times S_2 \times S_2$ is clear to see on the pairs $t_i,t_i'$. Then the lifts of the $M_i$ to the deformation are $M_0'=(1)$, $M_+'=(X_2,t_1)$, $M_-'=(t_2,Y_1)$, $M_1'=(X_3,Y_3) \cong (X_1,Y_1) \cong (X_2,Y_2)$, and $M_2'=(X_3,Y_3,t_3)$,
and the deformation of the reconstruction algebra can be presented as the completion of the path algebra of the following quiver
\begin{center}
\begin{tikzpicture} [bend angle=15, looseness=1, scale=1]
\node (C1) at (0,0)  {$M_0'\scriptstyle{(t_1',t_2',t_3')}$};
\node (C2) at (3,-2)  {$M_+'\scriptstyle{(t_2,t_2',t_3,t_3')}$};
\node (C3) at (3, 0){$M_2'\scriptstyle{(t_1,t_1',t_2,t_2')}$};
\node (C4) at (3,2) {$M_-'\scriptstyle{(t_1,t_1',t_3,t_3')}$};
\node (C5) at (6,0) {$M_1'\scriptstyle{(t_1,t_2,t_3)}$};
\draw [->,bend left=0] (C1) to node[gap]  {\scriptsize{$t_1$}} (C2);
\draw [->,bend left=45] (C2) to node[gap]  {\scriptsize{$inc$}} (C1);
\draw [->,bend left=0] (C2) to node[gap]  {\scriptsize{$Y_3/t_1=t_1'X_3/X_2$}} (C5);
\draw [->,bend left=45] (C5) to node[gap]  {\scriptsize{$X_2/X_3=t_1t_1'/Y_3$}} (C2);
\draw [->,bend left=7] (C1) to node[gap]  {\scriptsize{$t_3$}} (C3);
\draw [->,bend left=7] (C3) to node[gap]  {\scriptsize{$inc$}} (C1);
\draw [->,bend left=7] (C3) to node[gap]  {\scriptsize{$t_3'$}} (C5);
\draw [->,bend left=7] (C5) to node[gap]  {\scriptsize{$inc$}} (C3);
\draw [->,bend left=45] (C1) to node[gap]  {\scriptsize{$t_2$}} (C4);
\draw [->,bend left=0] (C4) to node[gap]  {\scriptsize{$inc$}} (C1);
\draw [->,bend left=45] (C4) to node[gap]  {\scriptsize{$X_3/t_2=t_2'Y_3/Y_2$}} (C5);
\draw [->,bend left=0] (C5) to node[gap]  {\scriptsize{$t_2t_2'/X_3=Y_1/Y_3$}} (C4);
\end{tikzpicture}
\end{center}
with relations those implied by the labelling, where we have denoted loops at a node by the labellings of the loops in parenthesis to simplify the diagram.
\end{TypeDExample}

\begin{NonQuotientExample}(Non-quotient) Consider the non-quotient rational surface singularity corresponding to the polynomial ring $\mathbb{C}[[t,X_1,X_2,X_3,X_4,Y_1,Y_2,Y_3,Y_4]]$ subject to the relations generated by the 2 by 2 minors of 
\[
\left( \begin{array}{c c c c }
 X_1 & X_2 & X_3 & X_4  \\
Y_1 & Y_2 & Y_3 & Y_4 
\end{array} \right),
\]
$X_1=t^2$, $Y_2=t^2$, $X_3-Y_3=t^2$, and $X_4 -\lambda Y_4=t^2$ for some $\lambda \in \mathbb{C}\backslash \{0,1\}$. For any $\lambda \in \mathbb{C}\backslash \{0,1\}$ this describes a rational surface singularity whose minimal resolution has an exceptional divisor consisting of a central -4 curve intersecting 4 outer -2 curves, see \cite[Corollary 3.6]{WahlEqnsRational}.

The indecomposable special MCM modules for this singularity can be described as $M_0=(1)$, $M_1=(t,Y_1)$, $M_2=(X_2,t)$, $M_3=(X_3,Y_3,t)$, $M_4=(X_4,Y_4,t)$ and $M_5=(X_3,Y_3)$, and the reconstruction can be presented as the completion of the path algebra of the quiver
\begin{center}
\begin{tikzpicture} [bend angle=10, looseness=1, scale=1]
\node (C1) at (0,0)  {$M_0$};
\node (C2) at (4,2.5)  {$M_1$};
\node (C3) at (3, 0.9){$M_2$};
\node (C4) at (3,-0.9) {$M_3$};
\node (E) at (4,-2.5) {$M_4$};
\node (C5) at (8,0) {$M_5$};
\draw [->,bend left=45] (C1) to node[gap]  {\scriptsize{$t$}} (C2);
\draw [->,bend left=0] (C2) to node[gap]  {\scriptsize{$inc$}} (C1);
\draw [->,bend left=45] (C2) to node[gap]  {\scriptsize{$X_3/t=tY_3/Y_1$}} (C5);
\draw [->,bend left=0] (C5) to node[gap]  {\scriptsize{$Y_1/Y_3=t^2/X_3$}} (C2);
\draw [->,bend left] (C1) to node[gap]  {\scriptsize{$t$}} (C3);
\draw [->,bend left] (C3) to node[gap]  {\scriptsize{$inc$}} (C1);
\draw [->,bend left] (C3) to node[gap]  {\scriptsize{$Y_3/t=tX_3/X_2$}} (C5);
\draw [->,bend left] (C5) to node[gap]  {\scriptsize{$t^2/Y_3=X_2/X_3$}} (C3);
\draw [->,bend left] (C1) to node[gap]  {\scriptsize{$t$}} (C4);
\draw [->,bend left] (C4) to node[gap]  {\scriptsize{$inc$}} (C1);
\draw [->,bend left] (C4) to node[gap]  {\scriptsize{$t$}} (C5);
\draw [->,bend left] (C5) to node[gap]  {\scriptsize{$inc$}} (C4);
\draw [->,bend left=0] (C1) to node[gap]  {\scriptsize{$t$}} (E);
\draw [->,bend left=45] (E) to node[gap]  {\scriptsize{$inc$}} (C1);
\draw [->,bend left=0] (E) to node[gap]  {\scriptsize{$(X_3-\lambda Y_3)/t$}} (C5);
\draw [->,bend left=45] (C5) to node[gap]  {\scriptsize{$X_4/X_3=Y_4/Y_3$}} (E);
\end{tikzpicture}
\end{center}
with relations those implied by the labelling, where we note that $(X_3-\lambda Y_3)/t = tX_3/X_4=tY_3/Y_4$.

The Artin component of the versal deformation of the singularity (after base change) is defined by the polynomial ring $\mathbb{C}[[t_1,t_1',t_2,t_2',t_3,t_3',t_4,t_4',X_1,X_2,X_3,X_4,Y_1,Y_2,Y_3,Y_4]]$ subject to the relations generated by the 2 by 2 minors of 
\[
\left( \begin{array}{c c c c }
X_1 & X_2 & X_3 & X_4  \\
Y_1 & Y_2 & Y_3 & Y_4 
\end{array} \right),
\]
$X_1=t_1t_1'$, $Y_2=t_2t_2'$, $X_3-Y_3=t_3t_3'$, and $X_4-\lambda Y_4=t_4t_4'$, see \cite[Theorem 2]{WahlSimultaneous}. Again, it is clear to see the action of the product of Weyl groups $S_2 \times S_2 \times S_2 \times S_2$ permuting the pairs $t,t'$.

The lifts of the $M_i$ to the versal deformation of the minimal resolution can be described as  $M_0'=(1)$, $M_1'=(t_1,Y_1)$, $M_2'=(X_2,t_2)$, $M_3'=(X_3,Y_3,t_3)$, $M_4'=(X_4,Y_4,t_4)$ and $M_5'=(X_3,Y_3)$, and the deformed reconstruction algebra can be presented as the completion of the path algebra of the quiver
\begin{center}
\begin{tikzpicture} [bend angle=10, looseness=1, scale=1]
\node (C1) at (0,0)  {$M'_0\scriptstyle{(t_1',t_2',t_3',t_4')}$};
\node (C2) at (4,2.5)  {$M'_1\scriptstyle{(t_2,t_2',t_3,t_3',t_4,t_4')}$};
\node (C3) at (4, 0.8){$M'_2\scriptstyle{(t_1,t_1',t_3,t_3',t_4,t_4')}$};
\node (C4) at (4,-0.8) {$M'_3\scriptstyle{(t_1,t_1',t_2,t_2',t_4,t_4')}$};
\node (E) at (4,-2.5) {$M'_4\scriptstyle{(t_1,t_1',t_2,t_2',t_3,t_3')}$};
\node (C5) at (10,0) {$M'_5\scriptstyle{(t_1,t_2,t_3,t_4)}$};
\draw [->,bend left=45] (C1) to node[gap]  {\scriptsize{$t_1$}} (C2);
\draw [->,bend left=0] (C2) to node[gap]  {\scriptsize{$inc$}} (C1);
\draw [->,bend left=45] (C2) to node[gap]  {\scriptsize{$X_3/t_1=t_1'Y_3/Y_1$}} (C5);
\draw [->,bend left=0] (C5) to node[gap]  {\scriptsize{$Y_1/Y_3=t_1t_1'/X_3$}} (C2);
\draw [->,bend left] (C1) to node[gap]  {\scriptsize{$t_2$}} (C3);
\draw [->,bend left] (C3) to node[gap]  {\scriptsize{$inc$}} (C1);
\draw [->,bend left=7] (C3) to node[gap]  {\scriptsize{$Y_3/t_2=t_2'X_3/X_2$}} (C5);
\draw [->,bend left=7] (C5) to node[gap]  {\scriptsize{$t_2t_2'/Y_3=X_2/X_3$}} (C3);
\draw [->,bend left] (C1) to node[gap]  {\scriptsize{$t_3$}} (C4);
\draw [->,bend left] (C4) to node[gap]  {\scriptsize{$inc$}} (C1);
\draw [->,bend left=7] (C4) to node[gap]  {\scriptsize{$t_3'$}} (C5);
\draw [->,bend left=7] (C5) to node[gap]  {\scriptsize{$inc$}} (C4);
\draw [->,bend left=0] (C1) to node[gap]  {\scriptsize{$t_4$}} (E);
\draw [->,bend left=45] (E) to node[gap]  {\scriptsize{$inc$}} (C1);
\draw [->,bend left=0] (E) to node[gap]  {\scriptsize{$(X_3-\lambda Y_3)/t_4$}} (C5);
\draw [->,bend left=45] (C5) to node[gap]  {\scriptsize{$X_4/X_3=Y_4/Y_3$}} (E);
\end{tikzpicture}
\end{center}
with relations those implied by the labelling, where we denote labelled loops at a node by labels in parenthesis at that node containing the labelling of the loops. Again, we note that $(X_3-\lambda Y_3)/t_4 =t_4'Y_3/Y_4=t_4'X_3/X_4$.
\end{NonQuotientExample}

\subsection{Non-geometric deformations and quotient surface singularities} Theorem \ref{Graded Main} constructs graded deformations that correspond to geometric deformations \label{NonGeometric} for noncommutative algebras that occur as the endomorphism algebra of a tilting bundle. In general there are further deformations of the noncommutative algebra which do not correspond to geometric deformations, such as occur for the preprojective algebras \cite{CBH}. However, for graded reconstruction algebras corresponding to $G<\GL_2(\mathbb{C})$ not contained in $\SL_2(\mathbb{C})$ all graded deformations correspond to geometric ones; there is no analogue to the parameter $t$ outside of $\SL_2(\mathbb{C})$. This is as graded deformations of a graded reconstruction algebra $A$ are controlled by the graded Hochschild cohomology group $\HH^2_{(<0)}(A)$ (see \cite[Section 1 and 2]{BrG}), the geometric deformations are controlled by $\CH^1(X,\mathcal{T}_{X})$ where $\mathcal{T}_{X}$ is the tangent sheaf (see \cite[Theorem 5.2, Corollary 10.3]{HartDef}), and the following result compares these two cohomology groups.

\begin{GL2AllDeforms} \label{GL2AllDeforms} If $X\rightarrow \Spec(R)$ is the minimal resolution of a surface quotient singularity $R=\mathbb{C}[x,y]^G$ for $G<\GL_2(\mathbb{C})$ a small, finite subgroup and $A$ is the corresponding graded reconstruction algebra, then
\begin{equation*}
\HH^2_{(<0)}(A) = \left\{ \begin{array}{c c} \CH^1(X,\mathcal{T}_X) & \text{ if } G \nless \SL_{2}(\mathbb{C}) \\
                                                                         \CH^1(X,\mathcal{T}_X) \oplus \mathbb{C}\Omega & \text{ if } G < \SL_2(\mathbb{C}) \end{array} \right.
\end{equation*}
where $\Omega \in \CH^0(X,\wedge^2 \mathcal{T}_X)$ corresponds to the symplectic form on $X$ induced by $\SL_2(\mathbb{C}) \cong \Sp_2(\mathbb{C})$.
\end{GL2AllDeforms}
\begin{proof}
By \cite[Corollary 3.5]{BHTilting}  $\HH^2(A) \cong \HH^2(X)$, by \cite[Corollary 2.6]{SwanHoch} $\HH^2(X) \cong \CH^0(X,\wedge^2 \mathcal{T}_X) \oplus \CH^1(X, \mathcal{T}_X) \oplus \CH^2(X,\mathcal{O}_X)$, and as $\Spec(R)$ is a rational singularity $\CH^2(X,\mathcal{O}_X)=0$. Due to the $\mathbb{C}^*$-actions $\HH_{(<0)}(A) \cong \CH^1_{(<0)}(X,\mathcal{T}_X) \oplus \CH^0_{(<0)}(X,\wedge^2 \mathcal{T}_X)$ considered as graded $R$-modules, and we note that $\CH^1(X,\mathcal{T}_X)=\CH^1_{(<0)}(X,\mathcal{T}_X)$ by Lemma \ref{HHSQS}. Hence we need only consider $\CH_{(<0)}(X,\wedge^2 \mathcal{T}_X)$.

In particular, $\wedge^2 \mathcal{T}_X \cong \omega_X^{\vee}$ and $\CH^0(X,\omega_{X}^{\vee}) \cong \Hom_R(\omega_{R},R)$ by \cite[Lemma 2.2]{Esnault}. The canonical module $\omega_R$  corresponds to $(\mathbb{C}[x,y] \, dx \wedge dy)^G$, see \cite[Section 3]{Aus}, hence is generated  as an $\mathbb{C}[x,y]^G$-module by a single element of degree $2$. Then the vector space $\Hom_{R}(\omega_R,R)_{(<0)}=0$ is equal to the vector space of paths of length $<2$ in the McKay quiver between the determinant representation and the trivial representation, which correspond to $\omega_R$ and $R$ respectively. 

There is a path of length zero if and only if $R \cong \omega_R$, which is precisely the case $G< \SL_2(\mathbb{C})$. In this case the isomorphism between $\omega_R$ and $R$ must send $dx \wedge dy$ to a scalar multiple of $1$, so has degree $-2$ and this element corresponds to a scalar multiple of $\Omega$.

Next, we need to show there are no length one paths (arrows) between $\omega_R$ and $R$.  As $G$ is small, the Auslander-Reiten (AR) quiver equals the McKay quiver \cite[Section 2]{Aus}.  The arrows in the AR quiver can be calculated using the fundamental sequence 
\begin{equation*}
0 \rightarrow \omega_R \rightarrow E \rightarrow R
\end{equation*}
where $E$ is the Auslander module and the arrows from $\omega_R$ can only go to the indecomposable summands of $E$. In particular, if $G$ is not cyclic the Auslander module is indecomposable of rank 2 \cite[Theorem 2.1]{YoshinoKawamoto} and hence there are no arrows between $\omega_R$ and $R$ as $R$ has rank one. When $G$ is cyclic $E$ is decomposable, but also known to not contain a copy of $A$ when $G$ is small \cite[Section 3]{YoshinoKawamoto}. Hence $\Hom_R(\omega_R,R)$ contains no paths of length one.
\end{proof}

\subsection{Symplectic quotient singularities}  \label{Deformations of Symplectic Quotient Singularites} As a second application we consider crepant resolutions of symplectic quotient singularities. We will recall the existence first of $\mathbb{C}^*$-equivariant deformations and then of $\mathbb{C}^*$-equivariant tilting bundles in order to apply Theorem \ref{Graded Main}. In this case we will produce graded deformations related to the symplectic reflection algebras.

Let $V=\mathbb{C}^{2n}$ with symplectic form $\omega$, let $G< \Sp(V)$ be a finite subgroup generated by symplectic reflections and acting faithfully on $V$ such that there is no $G$-invariant symplectic splitting, let $\Spec(R_0)=V/G$ be the corresponding quotient singularity, and suppose that a crepant resolution of singularities $p: X_0 \rightarrow \Spec(R_0)$ exists. We note that crepant resolutions only exist for some symplectic quotient singularities. Then $X_0$ carries a $\mathbb{C}^*$-action induced from that on $\mathbb{C}^{2n}$ such that $p:X_0 \rightarrow \mathbb{C}^{2n}/G$ is $\mathbb{C}^*$-equivariant \cite[Theorem 1.3 ii)]{CrepantResAction}.

Symplectic resolutions are known to have a class of good $\mathbb{C}^*$-equivariant deformations and in particular there is the following result.
\begin{KaledinGinzburg}[{\cite[Theorems 1.13 and 1.16]{PoisDefKaledinGinzburg}}] \label{KaledinGinzburg} There exists a smooth variety $X$ with $\mathbb{C}^*$-action and a flat $\mathbb{C}^*$-equivariant morphism  $\rho:X \rightarrow D=\Spec(\mathbb{C}[\beta_1, \dots, \beta_k])$ such that $X_0$ is the fibre of $\rho$ over the unique $\mathbb{C}^*$-invariant point $d=(\beta_1, \dots, \beta_k)$. The $\mathbb{C}^*$-action on $D$ is given by each $\beta_j$ having degree $2$ and $k$ is the number of conjugacy classes of symplectic reflections in $G$.   In particular $D$ is a graded complete local $\mathbb{C}$-algebra of finite type with unique graded maximal ideal $d$ and $(\rho: X \rightarrow \Spec(D), j_d:X_0\rightarrow X)$ provides a $\mathbb{C}^*$-equivariant deformation $X_0$. Moreover, this morphism factors  though a projective birational $\mathbb{C}^*$-equivariant morphism $\pi:X \rightarrow \Spec(R)$ where $R$ is also flat over $D$ with good $\mathbb{C}^*$-action and $R \otimes_D D/d \cong R_0.$
\end{KaledinGinzburg}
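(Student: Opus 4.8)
The plan is to assemble the statement from the two cited results of Kaledin and Ginzburg and then to verify the additional structural claims — the precise grading on the base, the goodness of the induced $\mathbb{C}^*$-actions, and the flatness of the intermediate affine scheme — that are needed in order to match the hypotheses of Theorem \ref{Graded Main} and hence to fit this symplectic situation into the framework of Section \ref{GradedDeformationsandSchemes}.

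First I would invoke \cite[Theorem 1.13]{PoisDefKaledinGinzburg} to produce the universal Poisson deformation $\rho:X \rightarrow \Spec(D)$ of the symplectic resolution $X_0$, whose total space $X$ is smooth and whose base is the affine space $\Spec(D)$ with $D=\mathbb{C}[\beta_1,\dots,\beta_k]$ a polynomial ring. The base is identified with $H^2(X_0,\mathbb{C})$, and the McKay-type correspondence for symplectic quotient singularities identifies $\dim H^2(X_0,\mathbb{C})$ with the number $k$ of conjugacy classes of symplectic reflections in $G$, fixing the number of parameters $\beta_j$. The contracting $\mathbb{C}^*$-action on $V=\mathbb{C}^{2n}$ scales the symplectic form $\omega$ with weight $2$, and since the $\beta_j$ are coordinates on the space of Poisson deformations, which rescale $\omega$, each $\beta_j$ acquires weight $2$. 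This makes $D$ a positively graded polynomial ring, hence a graded complete local $\mathbb{C}$-algebra of finite type in the sense of Section \ref{GradedCase}, with unique graded maximal ideal $d=(\beta_1,\dots,\beta_k)$; the fibre of $\rho$ over the corresponding unique $\mathbb{C}^*$-fixed point is $X_0$ by construction, so $(\rho,j_d)$ is a $\mathbb{C}^*$-equivariant deformation of $X_0$.

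Next I would invoke \cite[Theorem 1.16]{PoisDefKaledinGinzburg} to obtain the factorisation $\rho=\alpha \circ \pi$, where $\pi:X \rightarrow \Spec(R)$ is the projective birational $\mathbb{C}^*$-equivariant contraction realising the deformed resolution and $\alpha:\Spec(R)\rightarrow \Spec(D)$ is the accompanying Poisson (Calogero--Moser) deformation of $\Spec(R_0)$. Here $R=\Gamma(X,\mathcal{O}_X)$ is the relative affinisation, so $\pi$ is projective and birational, and $R$ is a graded $D$-algebra with $R \otimes_D D/d \cong \Gamma(X_0,\mathcal{O}_{X_0}) \cong R_0$, using that $X_0 \rightarrow \Spec(R_0)$ is already the affinisation of the resolution. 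Flatness of $R$ over $D$ would follow from flatness of $\rho$ together with the vanishing of the higher direct images $\mathbf{R}^{>0}\pi_* \mathcal{O}_X$ in this symplectic setting, so that forming $\Gamma$ commutes with base change along $\Spec(D/d)\hookrightarrow \Spec(D)$.

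The step I expect to require the most care is verifying that the induced $\mathbb{C}^*$-action on $\Spec(R)$ is \emph{good} in the sense recalled in Section \ref{GradedCase}, and correspondingly that $d$ is the unique $\mathbb{C}^*$-invariant closed point of $\Spec(D)$. For this I would argue that $R$ is positively graded with degree-zero part $\mathbb{C}$: the generators of $R_0=\mathbb{C}[V]^G$ sit in strictly positive degree, they lift to generators of $R$ over $D$ in the same degrees, and the deforming parameters $\beta_j$ also carry strictly positive weight $2$, so $R$ is concentrated in non-negative degrees with only the constants in degree zero. A positively graded finitely generated $\mathbb{C}$-algebra has a unique homogeneous maximal ideal on which $\mathbb{C}^*$ acts with strictly positive weights and whose vanishing locus is a single $\mathbb{C}^*$-fixed point lying in the closure of every orbit; applying this to both $R$ and $D$ gives the good actions and the uniqueness of $d$. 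Once this is in place, all the hypotheses of Theorem \ref{Graded Main} are met.
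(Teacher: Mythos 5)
The paper offers no proof of this lemma at all: it is stated purely as a quotation of \cite[Theorems 1.13 and 1.16]{PoisDefKaledinGinzburg}, and your proposal rests on exactly those same two theorems, so it takes essentially the same approach. The additional bookkeeping you supply (the weight-$2$ grading on the $\beta_j$, positivity of the grading on $R$ via graded Nakayama, goodness of the actions, and flatness of $R$ over $D$ via vanishing of higher direct images and base change) is sound and simply makes explicit what the paper delegates to the cited reference.
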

This deformation is known as the $\mathbb{C}^*$-equivariant versal Poisson deformation.

We also recall the existence of tilting bundles on symplectic resolutions.
\begin{SymplecticTiltingBundle}[{\cite[Theorems 2.3 and 4.3]{SympMcKay}}] \label{SymplecticTiltingBundle}
There exists a tilting bundle $T_0$ on $X_0$ tilting to the smash product algebra $\End_{X_0}(T_0) \cong \mathbb{C}[V] \rtimes G$, and there is a canonical $\mathbb{C}^*$-equivariant structure on $T_0$ such that $\mathbb{C}[V] \rtimes G$ is graded with $\mathbb{C}G$ in degree 0 and linear elements of $\mathbb{C}[V]=\Sym(V^*)$ in degree 1.
\end{SymplecticTiltingBundle}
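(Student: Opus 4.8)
The plan is to realise $T_0$ as the image of the skew group algebra, regarded as a module over itself, under a derived McKay equivalence, and then to check directly that this image is a genuine vector bundle. Write $A_0 := \mathbb{C}[V] \rtimes G$ and recall that the category of right $A_0$-modules is identified with the category $\Coh^G V$ of $G$-equivariant coherent sheaves on $V$, under which $A_0$ as a module over itself corresponds to the projective generator $\mathcal{O}_V \otimes_{\mathbb{C}} \mathbb{C}G$ (with $\mathbb{C}G$ carrying the regular representation).

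First I would establish a derived equivalence $\Phi : D^b(A_0) \xrightarrow{\sim} D^b(\Coh X_0)$. Since $p : X_0 \to \Spec(R_0) = V/G$ is a crepant resolution of a symplectic quotient singularity, $X_0$ is in fact a symplectic resolution with $\omega_{X_0} \cong \mathcal{O}_{X_0}$ (crepancy forces the symplectic form to extend across the exceptional locus), and the existence of $\Phi$ is precisely the symplectic McKay correspondence. This is the crux of the argument and its main obstacle: for an arbitrary crepant resolution, rather than only $G\text{-}\Hilb(V)$ where one could invoke Bridgeland--King--Reid directly, the equivalence is the theorem of Bezrukavnikov--Kaledin, obtained by reducing to positive characteristic and producing a tilting generator through quantization, with Kaledin's quantization-in-families argument giving an alternative route. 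I would take $\Phi$ as the principal input, recording that its Fourier--Mukai kernel is $G$-equivariant and supported over the fibre product $X_0 \times_{V/G} V$, so that it may be chosen compatibly with the scaling $\mathbb{C}^*$-action.

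Granting $\Phi$, set $T_0 := \Phi(A_0)$. By construction $\mathbf{R}\Hom_{X_0}(T_0, T_0) \cong \mathbf{R}\Hom_{A_0}(A_0, A_0) \cong A_0$ is concentrated in degree $0$, so $\Ext^i_{X_0}(T_0, T_0) = 0$ for $i \neq 0$ and $\End_{X_0}(T_0) \cong A_0$, while $T_0$ generates because $A_0$ generates $D^b(A_0)$. It remains to see that $T_0$ is an honest locally free sheaf rather than merely a complex. For this I would compute the derived restriction $\mathbf{L}j_x^* T_0$ of $T_0$ to each scheme-theoretic fibre $p^{-1}(x)$ over a closed point $x \in \Spec(R_0)$ from the explicit kernel and show it is a sheaf of the expected rank $|G|$; Bridgeland's Lemma \ref{VitalLemma1} then forces $T_0$ to be a single coherent sheaf flat over $V/G$, and flatness together with constant fibre rank yields local freeness. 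The criterion recalled in Remark \ref{RemarkTU} and Lemma \ref{UeharaToda} then confirms that $T_0$ is a tilting bundle.

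Finally, the scaling action of $\mathbb{C}^*$ on $V$ makes $\mathbb{C}[V] = \Sym(V^*)$ graded with $V^*$ in degree $1$ while fixing $\mathbb{C}G$, so $A_0$ is graded with $\mathbb{C}G$ in degree $0$ and the linear part $V^*$ in degree $1$; this is a good $\mathbb{C}^*$-action on $V/G$ which lifts to the given $\mathbb{C}^*$-action on $X_0$ making $p$ equivariant. As the kernel of $\Phi$ is $\mathbb{C}^*$-equivariant, $T_0 = \Phi(A_0)$ acquires a canonical $\mathbb{C}^*$-equivariant structure; by Lemma \ref{Action Lemma} the sheaf $\CalEnd_{X_0}(T_0)$ is then equivariant, and the resulting grading on $\End_{X_0}(T_0)$ coincides with the one transported from $A_0$ across $\Phi$, giving exactly the stated grading. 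Thus the only genuinely deep step is the derived equivalence of the second paragraph; once it is available with its degree-$0$-concentrated, $\mathbb{C}^*$-equivariant image, all remaining assertions follow from the flatness and fibre-dimension control already packaged in Lemmas \ref{VitalLemma1} and \ref{UeharaToda}.
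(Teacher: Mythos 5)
There is a genuine gap, and it sits exactly where you place the weight. The paper does not prove this statement at all: the lemma is a quotation of Bezrukavnikov--Kaledin \cite[Theorems 2.3 and 4.3]{SympMcKay}, and in that reference the logical order is the opposite of yours --- the vector bundle is constructed first (abstractly, by reduction to characteristic $p$ and quantization), the isomorphism $\End_{X_0}(T_0)\cong \mathbb{C}[V]\rtimes G$ comes with that construction, the $\mathbb{C}^*$-equivariant structure is their Theorem 4.3, and the derived McKay equivalence is deduced \emph{afterwards} as the tilting equivalence $\mathbf{R}\Hom_{X_0}(T_0,-)$. Your proposal discards the bundle, keeps only the equivalence $\Phi$, and then tries to recover the bundle as $\Phi(A_0)$. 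This creates a problem that cannot be repaired from the input you allow yourself: the existence of \emph{some} equivalence $D^b(A_0)\simeq D^b(\Coh \, X_0)$ says nothing about $\Phi(A_0)$ being a sheaf (compose any equivalence with a shift or a spherical twist and the image of $A_0$ is no longer a sheaf); what you need is an equivalence of tilting type, i.e.\ one under which $A_0$ goes to a vector bundle --- and that is verbatim the statement being proved. Your proposed repair, computing $\mathbf{L}j_x^*\Phi(A_0)$ ``from the explicit kernel'' fibre by fibre, would fail for the general crepant resolutions covered by the lemma: the Bezrukavnikov--Kaledin equivalence has no explicit Fourier--Mukai kernel (it is not the universal-family kernel of Bridgeland--King--Reid, which is available only in special cases such as $G$-$\Hilb$ and wreath products), so there is nothing concrete to compute with.

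Two smaller points. First, even granting that each $\mathbf{L}j_x^*\Phi(A_0)$ is a sheaf of rank $|G|$, Lemma \ref{VitalLemma1} only yields a coherent sheaf on $X_0$ flat over $V/G$; local freeness is a condition at closed points of $X_0$, not at fibres of $p$, so ``flatness together with constant fibre rank'' does not finish the argument --- one would instead need derived restrictions to points of $X_0$ to be concentrated in degree $0$, or an $\mathcal{E}xt$-vanishing criterion. Second, the assertion that the kernel ``may be chosen compatibly with the scaling $\mathbb{C}^*$-action'' is not an observation but precisely the content of \cite[Theorem 4.3]{SympMcKay}: the characteristic-$p$ construction is not a priori equivariant, and making it so requires a separate argument there. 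The honest proof of this lemma is the citation used in the form in which it is proved: take the tilting generator of \cite[Theorem 2.3]{SympMcKay} together with its equivariant structure from \cite[Theorem 4.3]{SympMcKay}; the derived equivalence you start from is then an output, not an input.
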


As there exist $\mathbb{C}^*$-equivariant deformations of a symplectic resolution and $\mathbb{C}^*$-equivariant tilting bundles we can apply Theorem \ref{Graded Main} to produce graded deformations of the smash product algebra.

\begin{SymplectRef} \label{SymplectRef}
 Consider the tilting bundle $T_0$ from Lemma \ref{SymplecticTiltingBundle}, and let $(\rho:X \rightarrow \Spec \, D,j_d:X_0 \rightarrow X)$ be the $\mathbb{C}^*$-equivariant deformation of Lemma \ref{KaledinGinzburg}. Then:
\begin{enumerate}
\item The $\mathbb{C}^*$-equivariant tilting bundle $T_0$ on $X_0$ lifts to a $\mathbb{C}^*$-equivariant tilting bundle $T$ on $X$.
\item The algebra $A:=\End_{X}(T)$ is derived equivalent to the $\mathbb{C}^*$-equivariant versal Poisson deformation space $X$.
\item The algebra $A$ is a graded deformation of the skew group algebra over $D$ induced by the $\mathbb{C}^*$-equivariant deformation $(\rho: X \rightarrow \Spec \, D , j_d:X_0 \rightarrow X)$.
\item For any closed point $z \in \Spec(D)$ the fibres $A_z:=A \otimes_D (D/z)$ are symplectic reflection algebras with noncommutative parameter $t$ set equal to 0.
\end{enumerate}
\end{SymplectRef}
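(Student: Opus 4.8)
The plan is to obtain parts (1)--(3) as a direct application of Theorem \ref{Graded Main}, and to deduce part (4) from Corollary \ref{FilteredCorollary} together with the Braverman--Gaitsgory and Etingof--Ginzburg classification of filtered deformations of the skew group algebra. First I would verify the hypotheses of Theorem \ref{Graded Main}. The base $\Spec(R_0)=V/G$ carries the good $\mathbb{C}^*$-action induced by the scaling action on $V$, with unique fixed closed point the image of the origin, and $p:X_0\to \Spec(R_0)$ is a projective $\mathbb{C}^*$-equivariant morphism (the equivariance being \cite[Theorem 1.3 ii)]{CrepantResAction}) with $X_0$ smooth, hence normal. The deformation datum is supplied by Lemma \ref{KaledinGinzburg}: the base $D=\mathbb{C}[\beta_1,\dots,\beta_k]$ (with each $\beta_j$ in degree $2$) is a graded complete local Noetherian $\mathbb{C}$-algebra whose $\mathbb{C}^*$-action is good, the total space $X$ is smooth hence normal, and $\rho$ factors through a projective $\mathbb{C}^*$-equivariant morphism $\pi:X\to\Spec(R)$ with $R$ flat over $D$, carrying a good $\mathbb{C}^*$-action, and $R\otimes_D D/d\cong R_0$. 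With these verified, Theorem \ref{Graded Main} yields that the $\mathbb{C}^*$-equivariant tilting bundle $T_0$ of Lemma \ref{SymplecticTiltingBundle} lifts to a $\mathbb{C}^*$-equivariant tilting bundle $T$ on $X$ and that $A:=\End_X(T)$ is a graded deformation of $A_0=\mathbb{C}[V]\rtimes G$ over $D$; this gives (1) and (3). Part (2) then follows since $T$ is a tilting bundle on the projective-over-affine scheme $\pi:X\to\Spec(R)$, so Theorem \ref{BHTilting} provides the derived equivalence between $D(X)$ and $D(A)$.

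For part (4) I would first apply Corollary \ref{FilteredCorollary}. Since $A_0=\mathbb{C}[V]\rtimes G$ is positively graded (by Lemma \ref{SymplecticTiltingBundle}, with $\mathbb{C}G$ in degree $0$ and the linear part of $\mathbb{C}[V]$ in degree $1$), for any closed point $z\in\Spec(D)$ the restriction $T_z=j_z^*T$ is a tilting bundle on the fibre $X_z$ and $A_z:=\End_{X_z}(T_z)\cong A\otimes_D D/z$ is a filtered algebra whose associated graded algebra is isomorphic to $\mathbb{C}[V]\rtimes G$. In other words $A_z$ is a PBW deformation of the Koszul algebra $\mathbb{C}[V]\rtimes G$. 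The classification of such deformations is precisely the content of Braverman--Gaitsgory \cite{BrG} for Koszul algebras, specialised to the skew group algebra by Etingof--Ginzburg \cite{EG}: every filtered deformation of $\mathbb{C}[V]\rtimes G$ with this associated graded is a symplectic reflection algebra $H_{t,c}$ for some value of the parameters $(t,c)$. Applying this identification to each $A_z$ shows that $A_z\cong H_{t,c}$ for parameters depending on $z$.

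The remaining, and genuinely substantive, step is to pin down the noncommutative parameter $t$ as being $0$, and for this I would argue via central finiteness. Because $T_z$ is a tilting bundle on the projective-over-affine scheme $X_z\to\Spec(R_z)$, Remark \ref{TiltingRemark} shows that $A_z$ is module-finite over its centre. On the other hand, the symplectic reflection algebra $H_{t,c}$ is module-finite over its centre precisely when $t=0$, the case $t\neq 0$ being the ``quantum'' regime in which the centre is too small for module-finiteness \cite{EG}. Comparing these two facts forces $t=0$, so each fibre $A_z$ is a symplectic reflection algebra with noncommutative parameter $t$ set to $0$, as claimed. I expect this to be the main obstacle: the identification in the previous paragraph only determines $A_z$ up to the pair $(t,c)$, and one must feed in geometric input---here the tilting-bundle origin of $A_z$, through module-finiteness over the centre---to exclude the non-geometric $t$-direction. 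As a consistency check, the base $D$ has exactly $k$ parameters, matching the number of conjugacy classes of symplectic reflections and hence the dimension of the space of $c$-parameters at $t=0$; the excluded $t$-direction is the quantization (Poisson-bivector) direction, analogous to the extra class $\mathbb{C}\Omega$ appearing in Lemma \ref{GL2AllDeforms}.
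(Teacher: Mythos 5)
Your proposal is correct. For parts (1)--(3) it follows the paper exactly: verify the hypotheses of Theorem \ref{Graded Main} via Lemma \ref{KaledinGinzburg} and Lemma \ref{SymplecticTiltingBundle} and apply it, with the derived equivalence in (2) coming from Theorem \ref{BHTilting}. Where you genuinely depart from the paper is part (4), and your version is the tighter one. The paper's own treatment is an informal discussion: symplectic reflection algebras are by definition the PBW deformations of $\mathbb{C}[V]\rtimes G$, PBW deformations are exactly the filtered algebras occurring as fibres of graded deformations (Lemma \ref{ReesRingGradedDeformations} and Corollary \ref{FilteredCorollary}), and the $t$-direction ``captures the deformation induced by the symplectic form and does not correspond to a geometric deformation'' --- so the exclusion of $t\neq 0$ is asserted rather than argued, with only the closing remark that the centre of $A_z$ can be realised as $R_z$ gesturing at a justification. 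You turn that step into an actual proof: by Remark \ref{TiltingRemark} the algebra $A_z=\End_{X_z}(T_z)$, being the endomorphism algebra of a tilting bundle on a scheme projective over $\Spec(R_z)$, is module-finite over its centre, whereas by Etingof--Ginzburg a symplectic reflection algebra $H_{t,c}$ with $t\neq 0$ has centre $\mathbb{C}$ (this uses the standing assumption that $V$ admits no $G$-invariant symplectic splitting) and is infinite dimensional, hence is not module-finite over its centre; therefore $t=0$. What your route buys is a complete argument where the paper has a plausibility claim, at the modest cost of invoking the Etingof--Ginzburg description of the centre of $H_{t,c}$, which the paper never states explicitly. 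The one point you should still make explicit is the (standard) fact that any filtered algebra whose associated graded is $\mathbb{C}[V]\rtimes G$ is a deformation of the quadratic algebra $\mathbb{C}[V]\rtimes G$ over the semisimple base $\mathbb{C}G$ of the type classified in \cite{BrG} and \cite{EG}, so that the identification $A_z\cong H_{t,c}$ is legitimate; the paper relies on this implicitly as well.
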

Part (4) follows as the symplectic reflection algebras are defined to be PBW deformations of  the skew group algebras, and PBW deformations are exactly the filtered algebras appearing as fibres of graded deformations. The corollary above recovers exactly those PBW deformations that correspond to deformations of the crepant resolution, however there is a further parameter $t$ for a general symplectic reflection algebra which captures the deformation induced by the symplectic form and does not correspond to a geometric deformation.

In particular, when a sympletic quotient singularity has a crepant resolution this result produces a derived equivalence between symplectic reflection algebras with parameter $t=0$ and fibres of the $\mathbb{C}^*$-equivariant Poisson deformation of the resolution.  In the case of wreath product groups, where crepant resolutions are known to exist, this was already known due to a result of Gordon and Smith \cite{GordonSmithRepresentationsSRA}. Their result proves that a symplectic reflection algebra with parameter $t=0$ arising from a wreath product algebra can be produced from a tilting bundle on a crepant resolution of the singularity defined by centre of this symplectic reflection algebra. The above corollary recaptures this via a different method: in the language of Corollary \ref{FilteredCorollary} the symplectic reflection algebra is $A_z$, its centre can be realised as $R_z$, the variety $X_z$ is a crepant resolution of the corresponding singularity, and $A_z$ is indeed defined by a tilting bundle $T_z$ on $X_z$.

\bibliographystyle{alpha}
\bibliography{RCABIB}

\begin{thebibliography}{Wem11b}

\bibitem[Art74]{Artin}
M.~Artin.
\newblock Algebraic construction of {B}rieskorn's resolutions.
\newblock {\em J. Algebra}, 29:330--348, 1974.

\bibitem[Aus86]{Aus}
M.~Auslander.
\newblock Rational singularities and almost split sequences.
\newblock {\em Trans. Amer. Math. Soc.}, 293(2):511--531, 1986.

\bibitem[BG96]{BrG}
A.~Braverman and D.~Gaitsgory.
\newblock Poincar\'e-{B}irkhoff-{W}itt theorem for quadratic algebras of
  {K}oszul type.
\newblock {\em J. Algebra}, 181(2):315--328, 1996.

\bibitem[BH13]{BHTilting}
R.O. Buchweitz and L.~Hille.
\newblock Hochschild (co-)homology of schemes with tilting object.
\newblock {\em Trans. Amer. Math. Soc.}, 365(6):2823--2844, 2013.

\bibitem[BK04]{SympMcKay}
R.~V. Bezrukavnikov and D.~B. Kaledin.
\newblock Mc{K}ay equivalence for symplectic resolutions of quotient
  singularities.
\newblock {\em Tr. Mat. Inst. Steklova}, 246(Algebr. Geom. Metody, Svyazi i
  Prilozh.):20--42, 2004.

\bibitem[Blu07]{BlumeThesis}
M.~Blume.
\newblock {\em McKay Correspondence and G-Hilbert Schemes}.
\newblock PhD thesis, T{\"u}bingen, 2007.

\bibitem[BR77]{BehnkeRiemenschneider}
K.~Behnke and O.~Riemenschneider.
\newblock Infinitesimale {D}eformationen von {D}iedersingularit\"aten.
\newblock {\em Manuscripta Math.}, 20(4):377--400, 1977.

\bibitem[Bri68]{Brieskorn}
E.~Brieskorn.
\newblock Rationale {S}ingularit\"aten komplexer {F}l\"achen.
\newblock {\em Invent. Math.}, 4:336--358, 1967/1968.

\bibitem[Bri99]{BridgelandEquivalences}
T.~Bridgeland.
\newblock Equivalences of triangulated categories and {F}ourier-{M}ukai
  transforms.
\newblock {\em Bull. London Math. Soc.}, 31(1):25--34, 1999.

\bibitem[CBH98]{CBH}
W.~Crawley-Boevey and M.~P. Holland.
\newblock Noncommutative deformations of {K}leinian singularities.
\newblock {\em Duke Math. J.}, 92(3):605--635, 1998.

\bibitem[Dav]{StefanDavidProject}
S.~David.
\newblock Deformed reconstruction algebras and their centres.
\newblock Undergraduate Project at the University of Edinburgh (2012).

\bibitem[EG02]{EG}
P.~Etingof and V.~Ginzburg.
\newblock Symplectic reflection algebras, {C}alogero-{M}oser space, and
  deformed {H}arish-{C}handra homomorphism.
\newblock {\em Invent. Math.}, 147(2):243--348, 2002.

\bibitem[Esn85]{Esnault}
H.~Esnault.
\newblock Reflexive modules on quotient surface singularities.
\newblock {\em J. Reine Angew. Math.}, 362:63--71, 1985.

\bibitem[GK04]{PoisDefKaledinGinzburg}
V.~Ginzburg and D.~Kaledin.
\newblock Poisson deformations of symplectic quotient singularities.
\newblock {\em Adv. Math.}, 186(1):1--57, 2004.

\bibitem[Gra72]{ExistenceofVersal}
H.~Grauert.
\newblock \"{U}ber die {D}eformation isolierter {S}ingularit\"aten analytischer
  {M}engen.
\newblock {\em Invent. Math.}, 15:171--198, 1972.

\bibitem[GS04]{GordonSmithRepresentationsSRA}
I.~Gordon and S.~P. Smith.
\newblock Representations of symplectic reflection algebras and resolutions of
  deformations of symplectic quotient singularities.
\newblock {\em Math. Ann.}, 330(1):185--200, 2004.

\bibitem[Har77]{HartshorneAG}
R.~Hartshorne.
\newblock {\em Algebraic geometry}.
\newblock Springer-Verlag, New York, 1977.
\newblock Graduate Texts in Mathematics, No. 52.

\bibitem[Har10]{HartDef}
R.~Hartshorne.
\newblock {\em Deformation theory}, volume 257 of {\em Graduate Texts in
  Mathematics}.
\newblock Springer, New York, 2010.

\bibitem[Has99]{HashimotoEquivariant}
M.~Hashimoto.
\newblock Homological aspects of equivariant modules: {M}atijevic-{R}oberts and
  {B}uchsbaum-{R}im.
\newblock In {\em Commutative algebra, algebraic geometry, and computational
  methods ({H}anoi, 1996)}, pages 259--302. Springer, Singapore, 1999.

\bibitem[Huy06]{HuybrechtsFM}
D.~Huybrechts.
\newblock {\em Fourier-{M}ukai transforms in algebraic geometry}.
\newblock Oxford Mathematical Monographs. The Clarendon Press, Oxford
  University Press, Oxford, 2006.

\bibitem[HVdB07]{HilleVdB}
L.~Hille and M.~Van~den Bergh.
\newblock Fourier-{M}ukai transforms.
\newblock In {\em Handbook of tilting theory}, volume 332 of {\em London Math.
  Soc. Lecture Note Ser.}, pages 147--177. Cambridge Univ. Press, Cambridge,
  2007.

\bibitem[Ish02]{McKaycorrespondenceIshii}
A.~Ishii.
\newblock On the {M}c{K}ay correspondence for a finite small subgroup of {${\rm
  GL}(2,\Bbb C)$}.
\newblock {\em J. Reine Angew. Math.}, 549:221--233, 2002.

\bibitem[IW10]{ClassificationOfCM}
O.~Iyama and M.~Wemyss.
\newblock The classification of special {C}ohen-{M}acaulay modules.
\newblock {\em Math. Z.}, 265(1):41--83, 2010.

\bibitem[Kal03]{CrepantResAction}
D.~Kaledin.
\newblock On crepant resolutions of symplectic quotient singularities.
\newblock {\em Selecta Math. (N.S.)}, 9(4):529--555, 2003.

\bibitem[Kar14]{CYPBW}
J.~Karmazyn.
\newblock Superpotentials, {C}alabi--{Y}au algebras, and {PBW} deformations.
\newblock {\em J. Algebra}, 413:100--134, 2014.

\bibitem[Lau73]{Laufer}
H.~B. Laufer.
\newblock Taut two-dimensional singularities.
\newblock {\em Math. Ann.}, 205:131--164, 1973.

\bibitem[Lip69]{LipmanRationalSingularities}
J.~Lipman.
\newblock Rational singularities, with applications to algebraic surfaces and
  unique factorization.
\newblock {\em Inst. Hautes \'Etudes Sci. Publ. Math.}, (36):195--279, 1969.

\bibitem[Liu02]{AGandarithmeticcurves}
Q.~Liu.
\newblock {\em Algebraic geometry and arithmetic curves}, volume~6 of {\em
  Oxford Graduate Texts in Mathematics}.
\newblock Oxford University Press, Oxford, 2002.
\newblock Translated from the French by Reinie Ern{\'e}, Oxford Science
  Publications.

\bibitem[Nam08]{NamikawaFlopsandPoisson}
Y.~Namikawa.
\newblock Flops and {P}oisson deformations of symplectic varieties.
\newblock {\em Publ. Res. Inst. Math. Sci.}, 44(2):259--314, 2008.

\bibitem[Nee96]{Neeman}
A.~Neeman.
\newblock The {G}rothendieck duality theorem via {B}ousfield's techniques and
  {B}rown representability.
\newblock {\em J. Amer. Math. Soc.}, 9(1):205--236, 1996.

\bibitem[Nee01]{NeemanTriangulatedCategories}
A.~Neeman.
\newblock {\em Triangulated categories}, volume 148 of {\em Annals of
  Mathematics Studies}.
\newblock Princeton University Press, Princeton, NJ, 2001.

\bibitem[Pin78]{Pinkham}
H.~Pinkham.
\newblock Deformations of normal surface singularities with {$C\sp*$}\ action.
\newblock {\em Math. Ann.}, 232(1):65--84, 1978.

\bibitem[Rie73]{Riemanschneider}
O.~Riemenschneider.
\newblock Deformations of rational singularities and their resolutions.
\newblock {\em Rice Univ. Studies}, 59(1):119--130, 1973.
\newblock Complex analysis, 1972 (Proc. Conf., Rice Univ., Houston, Tex.,
  1972). Vol. I: Geometry of singularities.

\bibitem[Rie74]{RiemenschneiderNach}
O.~Riemenschneider.
\newblock Deformationen von {Q}uotientensingularit\"aten (nach zyklischen
  {G}ruppen).
\newblock {\em Math. Ann.}, 209:211--248, 1974.

\bibitem[Rie98]{RiemenschneiderMcKay}
O.~Riemenschneider.
\newblock Cyclic quotient surface singularities: constructing the {A}rtin
  component via the {M}c{K}ay-quiver.
\newblock {\em S\=urikaisekikenky\=usho K\=oky\=uroku}, (1033):163--171, 1998.
\newblock Singularities and complex analytic geometry (Japanese) (Kyoto, 1997).

\bibitem[Rim80]{RimStructure}
Dock~S. Rim.
\newblock Equivariant {$G$}-structure on versal deformations.
\newblock {\em Trans. Amer. Math. Soc.}, 257(1):217--226, 1980.

\bibitem[Sta]{StacksProject}
{S}tacks {P}roject : http://stacks.math.columbia.edu/.

\bibitem[Sum75]{SumihiroEquivariantCompletetionII}
H.~Sumihiro.
\newblock Equivariant completion. {II}.
\newblock {\em J. Math. Kyoto Univ.}, 15(3):573--605, 1975.

\bibitem[Swa96]{SwanHoch}
R.~G. Swan.
\newblock Hochschild cohomology of quasiprojective schemes.
\newblock {\em J. Pure Appl. Algebra}, 110(1):57--80, 1996.

\bibitem[TU10]{TodaUehara}
Yukinobu T. and H.~Uehara.
\newblock Tilting generators via ample line bundles.
\newblock {\em Adv. Math.}, 223(1):1--29, 2010.

\bibitem[VdB]{VdBEquivariant}
M.~Van~den Bergh.
\newblock Some generalities on ${G}$-equivariant quasi-coherent ${O}_{X}$ and
  ${D}_{X}$-modules.
\newblock Available at
  http://hardy.uhasselt.be/personal/vdbergh/Publications/Geq.ps.

\bibitem[VdB04]{NCCRs}
M.~Van~den Bergh.
\newblock Non-commutative crepant resolutions.
\newblock In {\em The legacy of {N}iels {H}enrik {A}bel}, pages 749--770.
  Springer, Berlin, 2004.

\bibitem[Vie77]{Viehweg}
E.~Viehweg.
\newblock Rational singularities of higher dimensional schemes.
\newblock {\em Proc. Amer. Math. Soc.}, 63(1):6--8, 1977.

\bibitem[Wah77]{WahlEqnsRational}
J.~M. Wahl.
\newblock Equations defining rational singularities.
\newblock {\em Ann. Sci. \'Ecole Norm. Sup. (4)}, 10(2):231--263, 1977.

\bibitem[Wah79]{WahlSimultaneous}
J.~M. Wahl.
\newblock Simultaneous resolution of rational singularities.
\newblock {\em Compositio Math.}, 38(1):43--54, 1979.

\bibitem[Wem11a]{RCAA}
M.~Wemyss.
\newblock Reconstruction algebras of type {$A$}.
\newblock {\em Trans. Amer. Math. Soc.}, 363(6):3101--3132, 2011.

\bibitem[Wem11b]{WemyssGL2}
M.~Wemyss.
\newblock The {${\rm GL}(2,\Bbb C)$} {M}c{K}ay correspondence.
\newblock {\em Math. Ann.}, 350(3):631--659, 2011.

\bibitem[Wem12]{RCAD1}
M.~Wemyss.
\newblock Reconstruction algebras of type {$D$} ({I}).
\newblock {\em J. Algebra}, 356:158--194, 2012.

\bibitem[Wun88]{WunramReflexiveModules}
J.~Wunram.
\newblock Reflexive modules on quotient surface singularities.
\newblock {\em Math. Ann.}, 279(4):583--598, 1988.

\bibitem[YK88]{YoshinoKawamoto}
Y.~Yoshino and T.~Kawamoto.
\newblock The fundamental module of a normal local domain of dimension {$2$}.
\newblock {\em Trans. Amer. Math. Soc.}, 309(1):425--431, 1988.

\end{thebibliography}

\end{document}